\documentclass[letterpaper, oneside]{amsart}
\usepackage{layout}	

\usepackage[
]{geometry}


\usepackage[parfill]{parskip}   
\setlength{\parindent}{15pt}

\usepackage{amssymb}
\usepackage{amsthm}
\usepackage{amsmath}
\usepackage{enumitem}
\usepackage{mathtools}
\usepackage{hyperref}
\usepackage{colonequals}
\usepackage{mathrsfs}
\usepackage{color}
\usepackage{subcaption}
\usepackage{adjustbox}
\usepackage{ytableau}
\usepackage{bm}
\usepackage{bbm}
\usepackage{tensor}
\usepackage{esvect}
\usepackage{tabmac}

\usepackage{tikz}
\usetikzlibrary{cd}

\theoremstyle{plain}
\newtheorem{thm}{Theorem}[section]
\newtheorem*{thm*}{Theorem}
\newtheorem{lem}[thm]{Lemma}
\newtheorem{prop}[thm]{Proposition}

\theoremstyle{definition}
\newtheorem{defn}[thm]{Definition}
\newtheorem*{ack}{Acknowledgement}

\newtheorem{example}[thm]{Example}

\theoremstyle{remark}
\newtheorem*{rmk}{Remark}

\numberwithin{equation}{section}


\newcommand{\bC}{\mathbb{C}}

\newcommand{\bN}{\mathbb{N}}

\newcommand{\bZ}{\mathbb{Z}}

\newcommand{\ol}[1]{\overline{#1}}



\newcommand{\cB}{\mathcal{B}}

\newcommand{\cD}{\mathcal{D}}

\newcommand{\cJ}{\mathcal{J}}

\newcommand{\cO}{\mathcal{O}}
\newcommand{\cP}{\mathcal{P}}


\newcommand{\fM}{\mathfrak{M}}
\newcommand{\ft}{\mathfrak{t}}


\newcommand{\Sym}{\mathcal{S}}
\newcommand{\affSn}{\overline{\Sym_n}}
\newcommand{\extSn}{\widetilde{\Sym_n}}
\newcommand{\Odom}{\Omega_{\textup{dom}}}
\newcommand{\offset}{\vv{\mathbf{s}}}
\newcommand{\shift}{\bm{\omega}}
\newcommand{\tsc}{{\underline{c}}}
\newcommand{\lc}{\Gamma}
\newcommand{\lcc}{{\lc}^{\textup{can}}}
\newcommand{\lca}{{\lc}^{\textup{an}}}

\newcommand{\Tc}{{T}^{\textup{can}}}
\newcommand{\Ta}{{T}^{\textup{an}}}

\newcommand{\fw}{\textup{fw}}
\newcommand{\bk}{\textup{bk}}
\newcommand{\st}{\mathfrak{st}}

\newcommand{\RSYT}{\textup{RSYT}}


\DeclareMathOperator{\Rep}{\mathcal{R}}

\DeclareMathOperator{\Mat}{\textup{Mat}}

\DeclareMathOperator{\Dom}{\textup{Dom}}

\DeclareMathOperator{\Lie}{\textup{Lie}}

\DeclareMathOperator{\lch}{\textup{lch}}

\DeclareMathOperator{\rev}{\textup{rev}}

\newcommand{\br}[1]{\left\langle{#1}\right\rangle}

\newcommand{\floor}[1]{\left\lfloor{#1}\right\rfloor}
\newcommand{\ceil}[1]{\left\lceil{#1}\right\rceil}
\newcommand{\nco}[1]{\left[{#1}\right]_n}
\newcommand{\nbr}[1]{\left({#1}\right)}

\title[Asymptotic Hecke algebras and Lusztig-Vogan bijection via AMBC]{Asymptotic Hecke algebras and Lusztig-Vogan bijection via affine matrix-ball construction}
\author{Dongkwan Kim} 
\author{Pavlo Pylyavskyy}
\thanks{P. P. was partially supported by NSF grants DMS-1148634 and DMS-1351590.}
\date{\today}							

\begin{document}
\begin{abstract} The affine matrix-ball construction (abbreviated AMBC) was developed by Chmutov, Lewis, Pylyavskyy, and Yudovina as an affine generalization of the Robinson-Schensted correspondence. 
We show that AMBC gives a simple way to compute a distinguished involution in each Kazhdan-Lusztig cell of an affine symmetric group. We then use AMBC to give the first known canonical presentation for the asymptotic Hecke algebras of extended affine symmetric groups. As an application, we show that AMBC gives a conceptual way to compute the Lusztig-Vogan bijection. For the latter we build upon prior works of Achar and Rush.
\end{abstract}

\setcounter{tocdepth}{1}
\maketitle

\renewcommand\contentsname{}
\tableofcontents

\section{Introduction}
The Robinson-Schensted algorithm provides a bijection between the symmetric group and the set of pairs of standard Young tableaux of the same shape. It has become ubiquitous since its invention and now it appears in the theory of crystals, symmetric functions, representations of symmetric groups, Kazhdan-Lusztig cells, etc. Furthermore, it has been generalized in many ways, e.g. as the Robinson-Schensted-Knuth correspondence \cite{knu70}, domino insertion algorithm for hyperoctahedral groups \cite{gar90, gar92, gar93},  exotic Robinson-Schensted algorithm \cite{ht12}, to name a few.

Recently, Chmutov, Lewis, Pylyavskyy, and Yudovina \cite{cpy18}, \cite{clp17} introduced its affine generalization, called the affine matrix-ball construction (abbreviated AMBC), which extends the work of Shi \cite{shi91}. The affine matrix-ball construction provides a bijection $\Phi: w \mapsto (P(w), Q(w), \vv{\rho}(w))$ from the extended affine symmetric group to the set of triples $(P, Q, \vv{\rho})$ where $P$ and $Q$ are row-standard Young tableaux of the same shape and $\vv{\rho}$ is an integer vector satisfying certain properties. This algorithm also enjoys a lot of properties which the usual Robinson-Schensted algorithm possesses.

On the other hand, an (extended) affine symmetric group (or more generally an affine Weyl group) plays a central role in representation theory. It is strongly connected to the Springer theory (and its affine analogue), representations of Lie groups and Lie algebras, geometry of flag varieties, etc. (See e.g. \cite{hum91, cg97, lp07, yun16}.) In order to understand this group and its representation theory, it is desirable to find not only its characters but the parametrization and realizations of its (irreducible) representations.

To this end, Lusztig \cite{lus87:cell} introduced the \emph{asymptotic Hecke algebra}, conventionally denoted $\cJ$ (see \cite[Chapter 18]{lus14:hecke} for more details). It is a free abelian group with basis parametrized by elements of the corresponding affine Weyl group, say $\ft_w$, and the multiplication is given by $\ft_u\cdot\ft_v=\sum_{w}\gamma_{u,v,w^{-1}}\ft_w$ where the structure constants $\gamma_{u,v,w^{-1}}$ come from those of the affine Hecke algebra with respect to the canonical basis. Usually, it has a much simpler structure than the original affine Hecke algebra. For example, if we start with the extended affine symmetric group then the structure constants of such $\cJ$ can be obtained from Littlewood-Richardson coefficients of some (smaller) symmetric groups. Also, we will later observe that in this case $\cJ$ is a direct sum of certain matrix algebras. Nonetheless, it contains a lot of information of the representation theory of corresponding affine Hecke algebra. Thus, in order to study affine Hecke algebras it is often very useful to first understand the structure of the corresponding asymptotic version $\cJ$.

This paper develops such understanding of the structure of the asymptotic Hecke algebra $\cJ$ for $GL_n$ in terms of AMBC. Here we briefly explain our results. For each integer partition $\lambda \vdash n$, we define $F_\lambda$ to be $GL_{m_1} \times GL_{m_2} \times \cdots$ where $m_i$ is the number of $i$'s appearing in the parts of $\lambda$. Also, let $\tsc_\lambda$ be the two-sided Kazhdan-Lusztig cell in the extended affine symmetric group parametrized by $\lambda$ in the sense of \cite{lus89:cell}. Xi \cite{xi02} proved that for a certain left cell $\lca \subset \tsc_\lambda$, there exists a ring isomorphism between  $\cJ_{(\lca)^{-1} \cap \lca}$ and the representation ring $\Rep(F_\lambda)$ where the former is defined to be the restriction of $\cJ$ to the intersection of $\lca$ and its inverse. Our first main result is the following theorem.

\begin{thm*}[Theorem \ref{thm:xibij}] The ring isomorphism $\cJ_{(\lca)^{-1} \cap \lca} \xrightarrow{\simeq} \Rep(F_\lambda)$ defined by Xi coincides with the map $\ft_w\mapsto [V_{\vv{\rho}(w)}]$ where $\vv{\rho}(w)$ is the integer vector of the image $w \mapsto (P(w), Q(w), \vv{\rho}(w))$ under AMBC and $[V_{\vv{\rho}(w)}]$ is the class of an irreducible representation of $F_\lambda$ parametrized by $\vv{\rho}(w)$.
\end{thm*}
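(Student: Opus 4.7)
My plan is to prove the equality of the two ring isomorphisms by matching them on a small amount of structural data, rather than by comparing all structure constants directly. I split the argument into three stages.

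\emph{Stage 1 (bijection of bases).} Using the cell-theoretic interpretation of AMBC developed earlier in the paper, $w\in\lca$ is equivalent to $Q(w)=\Ta$, and dually $w\in(\lca)^{-1}$ to $P(w)=\Ta$; so on $(\lca)^{-1}\cap\lca$ only $\vv{\rho}(w)$ varies. The dominance/regularity constraints built into $\vv{\rho}$ should then translate, after grouping its coordinates according to the multiplicities $m_i$ of the parts of $\lambda$, into the condition that $\vv{\rho}$ is a dominant weight of $F_\lambda=\prod_i GL_{m_i}$. This already yields a bijection between the $\bZ$-bases $\{\ft_w : w\in(\lca)^{-1}\cap\lca\}$ and $\{[V_{\vv{\rho}}] : \vv{\rho}\text{ dominant for }F_\lambda\}$.

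\emph{Stage 2 (matching the unit).} Xi's isomorphism sends the distinguished involution $d$ of $(\lca)^{-1}\cap\lca$ to the trivial class $[V_0]$. Using the AMBC description of distinguished involutions that the paper establishes in an earlier section, I would verify directly that $\vv{\rho}(d)=0$, so the two maps agree on the unit.

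\emph{Stage 3 (ring structure via generators and rigidity).} Since $\Rep(F_\lambda)$ is a polynomial ring in the classes of the fundamental representations of the factors $GL_{m_i}$, I would (a) exhibit elements $w_{i,j}\in(\lca)^{-1}\cap\lca$ whose AMBC vector $\vv{\rho}(w_{i,j})$ is the $j$-th fundamental weight of $GL_{m_i}$, and (b) show that Xi's isomorphism sends each such $\ft_{w_{i,j}}$ to $[V_{\vv{\rho}(w_{i,j})}]$. For (b) I would exploit the positivity of Xi's isomorphism with respect to both the canonical basis $\{\ft_w\}$ and the effective-representation basis $\{[V_\mu]\}$, together with the identification of the unit from Stage 2, to pin the image of each minimal generator down uniquely. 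Once the fundamental-weight case is settled, for a general $w$ the class $[V_{\vv{\rho}(w)}]$ can be written as a polynomial in fundamental classes, and multiplicativity of Xi's isomorphism propagates the agreement to all of $(\lca)^{-1}\cap\lca$.

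The main obstacle will be Stage 3(a)--(b): identifying an explicit family of cell elements with AMBC vectors equal to the fundamental weights, and controlling their images under Xi's map. A brute comparison of the structure constants $\gamma_{u,v,w^{-1}}$ against Littlewood--Richardson coefficients would be forbidding; the proposed rigidity argument sidesteps that, but still requires a careful analysis of how small dominant weights arise from specific elements of $(\lca)^{-1}\cap\lca$ under AMBC, as well as a clean statement of positivity/rigidity for based ring maps $\cJ_{(\lca)^{-1}\cap\lca}\to\Rep(F_\lambda)$.
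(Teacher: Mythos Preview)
Your proposal takes a fundamentally different route from the paper, and it has a genuine gap.

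The paper's proof is purely combinatorial and does not touch the ring structure at all. Xi's map $\vv{\varepsilon}$ is defined by taking any complete stream family $\{S_1,\dots,S_{l(\lambda)}\}$ of $w$ and reading off the altitudes $(a(S_1),\dots,a(S_{l(\lambda)}))$; the theorem asserts that this equals $\rev_\lambda(\vv{\rho}(w))$. The paper proves this by induction on $l(\lambda)$: the key Lemma~\ref{lem:strfam} shows that from a complete stream family of $\fw(w)$ one can manufacture a complete stream family of $w$ whose first member is a channel of altitude $\rho_1$ and whose remaining members have the same altitudes as before. This directly matches the AMBC recursion $\vv{\rho}(w)=(\rho_1,\vv{\rho}(\fw(w)))$ to Xi's altitudes. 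No multiplication in $\cJ$ is ever used.

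The gap in your approach is a circularity in Stage~3. To ``propagate the agreement to all of $(\lca)^{-1}\cap\lca$'' via multiplicativity, you would need to know that the AMBC assignment $\ft_w\mapsto [V_{\rev_\lambda(\vv{\rho}(w))}]$ is itself a ring homomorphism. But this is not available prior to the theorem: in the paper's logical order, it is precisely Theorem~\ref{thm:xibij} (together with Xi's result) that is used later, in Lemmas~\ref{lem:determ} and~\ref{lem:weight}, to establish the ring-homomorphism property. Concretely, writing $[V_{\rev_\lambda(\vv{\rho}(w))}]=P\bigl([V_{\rev_\lambda(\vv{\rho}(w_{i,j}))}]\bigr)$ and applying Xi's isomorphism backwards gives $\ft_{\vv{\varepsilon}^{-1}(\rev_\lambda(\vv{\rho}(w)))}=P(\ft_{w_{i,j}})$; to conclude this equals $\ft_w$ is exactly the assertion $\vv{\varepsilon}(w)=\rev_\lambda(\vv{\rho}(w))$ you are trying to prove.

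There is a secondary issue with the rigidity step in Stage~3(b). Positivity plus unit-preservation does not pin down a based-ring automorphism of $\Rep(F_\lambda)$: for each factor $GL_{m}$ the duality $V\mapsto V^*$ (equivalently $(a_1,\dots,a_m)\mapsto(-a_m,\dots,-a_1)$ on highest weights) is a nontrivial positive automorphism fixing the unit, and if two multiplicities $m_i$ coincide there are further automorphisms swapping factors. So even granting that both maps were ring isomorphisms, your proposed rigidity would not suffice; you would still need a direct computation of $\vv{\varepsilon}(w_{i,j})$ for the fundamental-weight elements, which is essentially a special case of the combinatorial argument the paper carries out in full generality.

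A minor point: the ``AMBC description of distinguished involutions'' you invoke in Stage~2 is Theorem~\ref{thm:dist}, which appears \emph{after} Theorem~\ref{thm:xibij} in the paper. For the anti-canonical cell specifically, Lemma~\ref{lem:antican} (which is earlier) already gives $\Phi(w_0^\lambda)=(\Ta_\lambda,\Ta_\lambda,0)$, so this particular step can be salvaged; but it does not rescue Stage~3.
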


Originally, in his paper Xi used his bijection above to prove the conjecture of Lusztig \cite[Conjecture 10.5]{lus89:cell}. It states that $\cJ_{\tsc_\lambda}$, or the asymptotic Hecke algebra restricted to $\tsc_\lambda$, is isomorphic to a matrix algebra with base ring $\Rep(F_\lambda)$. He gave a description of such an isomorphism, but his construction was not canonical; one needs to pick an identification of each left cell with a fixed one, which relies on the choice of star operations and multiplication by a shift element that connect two such cells. In this paper, we describe such an isomorphism that naturally comes from AMBC, which is therefore more canonical; AMBC does not depend on the cell structure of (extended) affine symmetric group but relies only on their combinatorial properties. More precisely, we have the following theorem. Here, $\cD$ is the set of elements in the extended affine symmetric group which correspond to primitive idempotents in $\cJ$, called the distinguished involutions.

\begin{thm*}[Theorem \ref{thm:dist} and \ref{thm:matisom}] Consider a matrix algebra over $\Rep(F_\lambda)$, denoted $\fM$, whose rows (resp. columns) are parametrized by left cells (resp. right cells) in $\tsc_\lambda$. Then there exists an isomorphism of rings $\cJ_{\tsc_\lambda} \xrightarrow{\simeq} \fM$ which sendss $\ft_w$ to the matrix whose $(\Gamma_{P(w)}, (\Gamma_{Q(w)})^{-1})$-entry is $[V_{\vv{\rho}(w)}]$ and $0$ elsewhere. Here, $(P(w), Q(w), \vv{\rho}(w))$ is the image of $w$ under the AMBC map, and $\Gamma_{T}$ (resp.  $(\Gamma_{T})^{-1}$) is the left cell (resp. right cell) parametrized by $T$. Moreover, we have $w \in \cD$ if and only if $P(w) = Q(w)$ and $\vv{\rho}(w)=0$.
\end{thm*}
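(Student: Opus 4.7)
The plan is to leverage Theorem~\ref{thm:xibij} on the diagonal block $\cJ_{\lca\cap(\lca)^{-1}}$ and extend it across all off-diagonal blocks via a bimodule-transport argument. First I define the candidate map $\Psi\colon\cJ_{\tsc_\lambda}\to\fM$ on the basis by setting $\Psi(\ft_w)$ to be the matrix with $[V_{\vv{\rho}(w)}]$ at position $(\Gamma_{P(w)},(\Gamma_{Q(w)})^{-1})$ and zero elsewhere. Because AMBC is bijective and the left (resp. right) cells inside $\tsc_\lambda$ are parametrized by $P$ (resp. $Q$), $\Psi$ is tautologically a $\bZ$-module isomorphism, and the content lies in checking that it respects multiplication. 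Since $\gamma_{u,v,w^{-1}}\neq 0$ in $\cJ$ forces $w\sim_L u$ and $w\sim_R v$, both $\Psi(\ft_u\ft_v)$ and $\Psi(\ft_u)\Psi(\ft_v)$ are supported at the single matrix position $(\Gamma_{P(u)},(\Gamma_{Q(v)})^{-1})$, and the homomorphism condition reduces to the entry-wise identity
\[
\sum_{w}\gamma_{u,v,w^{-1}}\,[V_{\vv{\rho}(w)}]\;=\;[V_{\vv{\rho}(u)}]\cdot[V_{\vv{\rho}(v)}]\qquad\text{in }\Rep(F_\lambda),
\]
the sum ranging over $w$ with $P(w)=P(u)$ and $Q(w)=Q(v)$.

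Theorem~\ref{thm:xibij} establishes this identity when $P(u)=Q(u)=P(v)=Q(v)=\Ta$. To extend it, for each tableau $T$ indexing a left cell in $\tsc_\lambda$ I fix a distinguished inter-cell element $\sigma_T\in\Gamma_T\cap(\lca)^{-1}$ characterized by $(P,Q,\vv{\rho})(\sigma_T)=(T,\Ta,0)$, and symmetrically $\tau_T\in\lca\cap\Gamma_T^{-1}$ with image $(\Ta,T,0)$. Left convolution with $\sigma_T$ and right convolution with $\tau_{T'}$ should then implement $\cJ_{\lca\cap(\lca)^{-1}}$-bimodule isomorphisms from the diagonal block onto $\cJ_{\Gamma_T\cap(\Gamma_{T'})^{-1}}$. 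The essential input from AMBC is that such convolutions act on the AMBC triple by replacing $(P,Q)$ as prescribed while preserving $\vv{\rho}$; this compatibility is to be extracted from the affine Knuth/star combinatorics developed in \cite{cpy18,clp17}. Once the transport compatibility is in place, the general entry-wise identity above reduces to its diagonal instance, already handled.

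For the distinguished-involution characterization, $\cD\cap\tsc_\lambda$ is the unique set of basis elements appearing with signs $\pm 1$ in the decomposition of $1_{\cJ_{\tsc_\lambda}}$ into primitive orthogonal idempotents. Under $\Psi$, the unit of $\fM$ decomposes uniquely as $\sum_T [V_0]\,E_{\Gamma_T,\Gamma_T^{-1}}$, where $V_0$ is the trivial representation of $F_\lambda$; each summand is a primitive idempotent of $\fM$ because $\Rep(F_\lambda)$ (the representation ring of a product of general linear groups) contains no nontrivial idempotents. Reading these decompositions off through $\Psi^{-1}$ and using the explicit formula for $\Psi$ forces $P(d)=Q(d)$ and $\vv{\rho}(d)=0$ for every $d\in\cD\cap\tsc_\lambda$, with a bijective match by cardinality. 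The identification of the zero $\vv{\rho}$-vector with $V_0$ in Xi's parametrization comes directly from Theorem~\ref{thm:xibij} applied to the canonical involution of $\lca$.

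The decisive step is the transport in the second paragraph: one must establish cleanly from AMBC that convolution by $\sigma_T$ and $\tau_T$ acts on the triple $(P,Q,\vv{\rho})$ as expected, changing the tableau entries to those prescribed and leaving $\vv{\rho}$ invariant. This is exactly what makes the present construction canonical, rather than dependent on star-operation bookkeeping as in Xi's original argument, so I expect it to carry the bulk of the combinatorial work.
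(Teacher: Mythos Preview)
Your overall architecture matches the paper's: anchor the multiplication identity at the anti-canonical diagonal via Theorem~\ref{thm:xibij} (together with Xi's ring isomorphism), then transport to arbitrary $(P,Q,R)$. The paper carries out the transport not by positing elements $\sigma_T,\tau_T$ and asserting that convolution with them fixes $\vv\rho$, but by tracking step-by-step how a single star operation or a single multiplication by $\shift$ alters the normalized weight $\vv\rho$ (Propositions~\ref{prop:shift} and~\ref{prop:star}): each such move shifts $\vv\rho$ by a \emph{determinantal} vector $\vv\delta$, and these shifts cancel across the two factors of a product by Lemma~\ref{lem:gamma}. This is packaged as Lemma~\ref{lem:determ} (the case where one factor has determinantal $\vv\rho$, in particular $\vv\rho=0$) and Lemma~\ref{lem:weight} (the diagonal case at an arbitrary $T$), from which the general identity follows by the manipulation in the proof of Theorem~\ref{thm:matisom}. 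Your sentence ``this compatibility is to be extracted from the affine Knuth/star combinatorics'' is thus pointing at exactly the right place, but be aware that the content is precisely these $\vv\delta$-shift formulas; without them the statement that convolution by your $\sigma_T,\tau_T$ preserves $\vv\rho$ is the theorem, not an input. Two small caveats: the element with AMBC triple $(T,\Ta,0)$ need not exist, since $0$ is not in general dominant with respect to $(T,\Ta)$; the correct normalizing element is $\ft_{T,\Ta}=\ft(T,\Ta,\offset_{T,\Ta})$. Also, the entry of the matrix is $V(\rev_\lambda(\vv\rho))$ with $\vv\rho$ the \emph{offset-normalized} weight, not the raw $\vv\rho(w)$.

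Your deduction of the distinguished-involution statement from the ring isomorphism is a clean alternative to the paper's direct argument (which transports $\Phi(w_0^\lambda)=(\Ta,\Ta,0)$ via the same star/$\shift$ moves and Lemma~\ref{lem:distconn}). Your version works because each $\ft_d$ already maps to a matrix with a single nonzero entry and the $d\in\cD\cap\tsc_\lambda$ lie in pairwise distinct left cells, so summing to the identity forces $P(d)=Q(d)$ and the entry to be the class of the trivial representation; no appeal to uniqueness of primitive idempotent decompositions is needed.
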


Meanwhile, the conjecture of Lusztig on the structure of the asymptotic Hecke algebra $\cJ$ provides a certain bijection between the set of irreducible representations of $GL_n$, denoted $\Dom(GL_n)$, and the union of $\Dom(F_\lambda)$ where $\lambda$ runs over all the partitions of $n$. This is now called the Lusztig-Vogan bijection, and is studied by Achar \cite{ach01, ach04}, Bezrukavnikov \cite{bez03, bez04, bez09}, Ostrik \cite{ost00}, and Rush \cite{rus17:thesis, rus17}. As AMBC gives a proof of Lusztig's conjecture, there also exists a new interpretation of the Lusztig-Vogan bijection in terms of AMBC. Our next result is that these bijections are all equal; we have

\begin{thm*}[Theorem \ref{thm:lvbij}] The Lusztig-Vogan bijection induced from AMBC (see the beginning of Section \ref{sec:equality} for the precise definition) is equal to that of Achar \cite{ach01}, Bezrukavnikov \cite{bez03, bez04}, Ostrik \cite{ost00}, and Rush \cite{rus17:thesis, rus17}.
\end{thm*}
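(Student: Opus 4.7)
The plan is to fix a single benchmark version of the Lusztig--Vogan bijection and compare the AMBC-induced bijection to it, leveraging the fact that the four constructions of Achar, Bezrukavnikov, Ostrik, and Rush are already known to coincide. Rush \cite{rus17} proved that his explicit combinatorial reformulation agrees with Achar's algorithm, and Achar \cite{ach04} together with Bezrukavnikov \cite{bez09} established the remaining equivalences. Hence it suffices to check that the AMBC-induced bijection equals Rush's.

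First, I would unpack the AMBC-induced map concretely. By Theorem \ref{thm:matisom}, the canonical isomorphism $\cJ_{\tsc_\lambda}\xrightarrow{\simeq}\fM$ identifies the simple $\cJ_{\tsc_\lambda}$-modules with simple modules of a matrix algebra over $\Rep(F_\lambda)$, i.e., with $\Dom(F_\lambda)$. Coupled with Lusztig's identification of the union of these modules over $\lambda\vdash n$ with $\Dom(GL_n)$, this is precisely the AMBC Lusztig--Vogan map. Since Theorem \ref{thm:matisom} sends $\ft_w$ to the weight $\vv{\rho}(w)$, the output on the $\Dom(F_\lambda)$-side can be read off directly from the AMBC triple.

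Next, I would recall Rush's construction, which proceeds by an iterative peeling procedure: starting from a dominant weight $\mu\in\Dom(GL_n)$, one reduces to a partition $\lambda$ and a residual dominant weight of $F_\lambda$ one row (or column) at a time. The core of the comparison is to match one peeling step with a corresponding AMBC operation, most naturally multiplication by an appropriate shift element $\shift$ in the extended affine symmetric group. The shift-equivariance of AMBC established in \cite{cpy18, clp17} governs how such a multiplication alters the triple $(P,Q,\vv{\rho})$, and the plan is to show that the change in $\vv{\rho}$ matches exactly the change in Rush's residual weight. Combined with a direct check of the base case (for instance, when $\lambda$ is a single column so that $\Dom(F_\lambda)$ already coincides with $\Dom(GL_n)$), this yields the desired equality.

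The main obstacle I anticipate is the format mismatch: Rush's algorithm is recursive and tableau-theoretic, whereas AMBC is a one-shot bijection with a highly nontrivial combinatorial engine. Bridging the two cleanly requires identifying precisely which shift element corresponds to each peeling step of Rush's induction, and keeping careful track of how the canonical left cell $\lca$ transforms under this shift. I expect the technical heart of the proof to be this inductive comparison step: showing that, after one round of peeling on Rush's side, the AMBC triple of the corresponding shifted affine permutation records the same residual weight. Once this compatibility is established, the uniqueness of the bijection compatible with the asymptotic Hecke algebra structure (pinned down by Theorem \ref{thm:matisom}) forces global agreement.
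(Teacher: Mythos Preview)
Your proposal misses the key structural insight that makes the paper's proof work, and the inductive mechanism you propose does not match either Rush's algorithm or the paper's argument.

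The paper does not attempt to align Rush's recursion with AMBC step by step. Instead, it exploits an algebraic rigidity: both $\tilde{\Theta}_1$ and $\tilde{\Theta}_2$ induce ring isomorphisms $\cJ_{(\lcc_\lambda)^{-1}\cap\lcc_\lambda}\simeq\Rep(F_\lambda)$, so their composite $\Theta_2\circ\Theta_1^{-1}$ restricts to a ring automorphism of $\Rep(F_\lambda)\simeq\bigotimes_i\Lambda_{m_i}$. Such an automorphism is the identity once it fixes the generators $h_1,\ldots,h_m$ of each factor, i.e., the classes $V(\vv{\rho}_\lambda(r,s))$ for $s\geq 1$ together with the trivial class (Lemma~\ref{lem:Jringgen}). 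This reduces the entire theorem to checking agreement on a very sparse family of weights. The paper then computes $\Theta_2^{-1}$ on this family explicitly via the Achar--Rush minimisation description (Lemma~\ref{lem:theta2cal}), producing concrete tableaux $\mathfrak{W}_\lambda(r,s)$ and weights $\vv{\mu}_\lambda(r,s)$, and separately verifies that AMBC sends these same $\vv{\mu}_\lambda(r,s)$ to $\vv{\rho}_\lambda(r,s)$.

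The latter verification is done by induction on $l(\lambda)$, but the inductive step is the \emph{forward AMBC step} $w\mapsto\fw(w)$ (which strips the first row of $\lambda$), not multiplication by $\shift$. The shift element plays no role here. Moreover, the base case is a single \emph{row} ($\lambda=(n)$, where $w=\shift^s$), not a single column. Your description of Rush's algorithm as ``peeling one row or column at a time'' is also inaccurate: his map is characterised by a norm-minimisation over weights of $L_\lambda$ satisfying a branching condition, and there is no evident sense in which a single step of that procedure corresponds to multiplication by $\shift$ on the affine side. Without the ring-automorphism reduction, you would be forced to match the two algorithms on \emph{all} of $\Dom(GL_n)$, and your proposed bridge via $\shift$ does not provide that.
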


This paper is organized as follows; in Section 2 we recall basic notations and definitions used in this paper; in Section 3 we introduce the notion of canonical and anti-canonical left cells; in Section 4 we prove the theorem stating that the bijection of Xi is compatible with the result of AMBC; in Section 5 we study how multiplication by a shift element and a star operation behave in terms of AMBC; in Section 6 we describe the image of distinguished involutions under AMBC; in Section 7 we provided an isomorphism between the asymptotic Hecke algebra attached to a two-sided cell and a certain matrix algebra using the AMBC map; in Section 8 we describe the Lusztig-Vogan bijection, explain some of its properties, and state that the bijection which is derived from AMBC is equal to the one previously studied by Achar, Bezrukavnikov, Ostrik, and Rush, whose proof is given in Section 9. In the appendix, we briefly discuss how our results can be applied to other affine Weyl groups of type $A$, especially those of $SL_n$ and $PGL_n$.

\begin{ack} We are grateful to Michael Chmutov and an anonymous referee for their helpful comments.
\end{ack}

\section{Backgrounds}
\subsection{Setup} \label{sec:setup}
For $a,b \in \bZ$, we set $[a,b]$ to be $\{x\in \bZ\mid a\leq x\leq b\}$. For $n \in \bZ_{>0}$, we define $[n]=\{\ol{1}, \ol{2}, \ldots, \ol{n}\}$ to be the set of residues modulo $n$.

For $n \in \bZ_{>0}$, we define the \emph{extended affine symmetric group} to be
\[\extSn\colonequals \{ w : \bZ \rightarrow \bZ \mid w \textup{ is bijective, } w(i+n)=w(i)+n \textnormal{ for all } i \in \bZ\}.
\]
We shall use the \emph{window notation} for elements of $\extSn$: $w = [w(1), w(2), \ldots, w(n)]$. An important distinguished element $\shift \in \extSn$ is the {\it {shift permutation}} $\shift = [2,3,\ldots, n, n+1]$. We shall often identify $w \in \extSn$ with the set $\cB_w \subset \bZ \times \bZ$ of all pairs $(i, w(i)), i \in \bZ$. We refer to elements of $\bZ \times \bZ$ that are of the form $(i, w(i))$ as \emph{balls} of $w$. If we set $\Sym_n$ to be the (finite) symmetric group permuting $[1,n]$, then there exists a canonical embedding $\Sym_n \rightarrow \extSn$ which sends $w$ to $[w(1), \ldots, w(n)]$.

A {\it {partition}} $\lambda =(\lambda_1, \lambda_2, \ldots)$ is a finite sequence of integers satisfying $\lambda_1 \geq \lambda_2 \geq \cdots>0.$ We denote by $\lambda'$ the {\it {transpose}} of $\lambda$, often also referred to as the {\it {conjugate partition}}. We define its size to be $n = \sum_i \lambda_i$ and also write $\lambda \vdash n$. A partition $\lambda=(\lambda_1, \lambda_2, \ldots) \vdash n$ can also be denoted $\lambda = (1^{m_1}2^{m_2}\cdots)$, where $m_i$ is the number of parts of size $i$ among the $\lambda_j$-s.

For $\lambda \vdash n$ we define a {\it {tabloid}} $T =(T_1, T_2, \ldots)$ of shape $\lambda$ to be a Young diagram of shape $\lambda$ filled with the elements $[n]$ where each residue appears exactly once. The order of elements in each row does not matter, but it is convenient to regard elements of $T_i$ as being ordered with respect to the linear ordering $\ol{1} < \cdots < \ol{n}$. In this case we say that $T$ is {\it {row-standard}}. For a partition $\lambda\vdash n$, define $\RSYT(\lambda)$ to be the set of row-standard Young tabloid of shape $\lambda$.

For $n \geq 1$, we define $\Dom(GL_n)=\Dom(GL_n(\bC))$ to be the set of integer vectors
$$\Dom(GL_n)\colonequals \{ (a_1, a_2, \ldots, a_n) \in \bZ^n \mid a_1\geq a_2\geq \cdots \geq a_n\}.$$
We naturally identify this with the set of dominant weights of $GL_n$. Also let $\textup{Rep}(GL_n)=\textup{Rep}(GL_n(\bC))$ be the category of finite dimensional rational representations of $GL_n$. Then for any $\vv{\mu} \in \Dom(GL_n)$, there is an irreducible representation $V(\vv{\mu}) \in \textup{Rep}(GL_n)$ of highest weight $\vv{\mu}$, and $\{V(\vv{\mu}) \in\textup{Rep}(GL_n) \mid \vv{\mu} \in \Dom(GL_n)\}$ is a complete collection of irreducible objects in $\textup{Rep}(GL_n)$ up to isomorphism. If we further define $\Rep(GL_n)$ to be the Grothendieck ring of $\textup{Rep}(GL_n)$ (see \cite[II.6]{wei13} for the definition of Grothendieck rings), then the classes of $V(\vv{\mu})$ form a $\bZ$-basis of $\Rep(GL_n)$.

Let $\cJ$ be the asymptotic Hecke algebra of $\extSn$ defined in \cite[Section 2]{lus87:cell}. It is a free $\bZ$-module with a basis $\{\ft_w \mid w\in \extSn\}$, and $(\cJ, \{\ft_w \mid w\in \extSn\})$ is a based ring in the sense of \cite[Section 1]{lus87:leading}. Also if we set $\cD \subset \extSn$ to be the set of distinguished involutions, then $\cD$ is finite and the unit of $\cJ$ is given by $\sum_{w\in \cD}\ft_w$.

For a subset $X \subset \extSn$, we define $\cJ_X \colonequals \bigoplus_{w \in X} \bZ \ft_w$ to be the free sub-$\bZ$-module of $\cJ$ generated by $\{\ft_w \mid w \in X\}$. Then for a two-sided cell $\tsc \subset \extSn$, $\cJ_{\tsc}$ is a two-sided ideal of $\cJ$. Also, $(\cJ_{\tsc},  \{\ft_w \mid w\in \tsc\})$ is a based ring and $\sum_{w\in \cD\cap \tsc}\ft_w$ is its unit. In other words, we have a decomposition of based rings $\cJ = \bigoplus_{\tsc \subset \extSn} \cJ_\tsc$
where the sum is over all the two-sided cells $\tsc$ of $\extSn$. Also if $\lc$ is a left cell in $\tsc$, then $\cJ_{ \lc}$ (resp. $\cJ_{\lc^{-1}}$,  $\cJ_{\lc^{-1} \cap \lc}$) is a left (resp. right, two-sided) ideal of $\cJ_{\tsc}$. It is known \cite{lus87:cell} that $\cJ_{\lc^{-1} \cap \lc}$ is a commutative ring with unit $\ft_w$ where $w$ is the unique element in $ \lc \cap \cD$.

Recall some facts about nilpotent orbits, we refer the reader to \cite{col93nilp} for an exposition. Nilpotent adjoint orbits of $\Lie GL_n$ are parametrized by partitions of $n$, where to each $\lambda \vdash n$ one associates a nilpotent orbit $\cO_{\lambda}$ such that the Jordan type of any element in $\cO_\lambda$ is $\lambda$. Also, there exists a canonical bijection between nilpotent adjoint orbits of $\Lie GL_n$ and two-sided cell in $\extSn$ defined in \cite{lus89:cell}. We denote the two-sided cell corresponding to $\cO_\lambda$ by $\tsc_\lambda$.

Let the conjugate partition of $\lambda$ be $\lambda'=(\lambda'_1, \lambda'_2, \ldots)$. We define 
\begin{gather*}
F_\lambda \colonequals GL_{m_1} \times GL_{m_2} \times \cdots \qquad \textup{ and } L_\lambda \colonequals GL_{\lambda_1'}\times GL_{\lambda_2'} \times \cdots.
\end{gather*}
Then $\Dom(F_\lambda) = \Dom(GL_{m_1}) \times \Dom(GL_{m_2}) \times \cdots$ is naturally a subset of $\bZ^{l(\lambda)}$. For $N \in \cO_\lambda$, we may identify $F_\lambda$ with the reductive part of $Z_{GL_n}(N)$, the stabilizer subgroup of $N$ in $GL_n=GL_n(\bC)$. Then it follows from \cite{xi02} that there exists an isomorphism of rings from $\cJ_{\lc^{-1} \cap \lc}$ to $\Rep(F_\lambda)$ under which each $\ft_w$ maps to an irreducible representation, say $V(\epsilon(w))$. Here, $\Gamma$ is a certain left cell contained in $\tsc_\lambda$. (Such $\Gamma$ will be called the anti-canonical left cell of $\tsc_\lambda$; see Section \ref{sec:cancells} and \ref{sec:xibij}.) Furthermore, he proved that there exists a (non-canonical) isomorphism $\cJ_{\tsc_\lambda} \simeq \Mat_{\chi\times \chi} (\Rep(F_\lambda))$ compatible with the isomorphism above. Here, $\chi = \frac{n!}{\lambda_1!\lambda_2!\cdots}$ is the Euler characteristic of the Springer fiber of some/any $N \in \cO_\lambda$.

\subsection{AMBC and Kazhdan-Lusztig cells}
Here we briefly recall some notations from \cite{cpy18} and \cite{clp17} and discuss the relations between the affine matrix-ball construction (abbreviated AMBC) and Kazhdan-Lusztig cells.

For $T=(T_1, \ldots, T_l)\in \RSYT(\lambda)$ and $i\in [1, l-1]$ we define $\lch_i(T)$, called the local charge in row $i$ of $T$, as follows. Suppose that $T_i=(a_1, \ldots, a_s)$ and $T_{i+1} = (b_1, \ldots, b_t)$. Then $\lch_i(T)$ is the smallest $d\in \bN$ such that $a_{l-d}<b_{l}$ for $l\in [d+1, t]$. Pictorially, this measures necessarily shift of $T_i$ to the right so that $(T_i, T_{i+1})$ becomes a standard Young tableau (of skew shape). For example, if $T_i = (3,5,7,8)$ and $T_{i+1} = (1,2,4,6)$ then we have $\lch_i(T) = 2$ which can be obtained from the following picture.
\[\ytableaushort{3578,1246} \quad \Rightarrow\quad \ytableaushort{{\none}{\none}3578,1246}
\]

For $P, Q \in \RSYT(\lambda)$ where $\lambda=(\lambda_1, \ldots, \lambda_l)$, we define the symmetrized offset constants with respect to $(P, Q)$, denoted by $\offset_{P,Q}=(s_1, \ldots, s_l) \in \bZ^l$, as follows.
\[s_i = 
\left\{\begin{aligned}
&0& \textnormal{ if } i=1 \textnormal{ or } \lambda_i>\lambda_{i+1},
\\&s_{i-1}+\lch_{i-1}(P)-\lch_{i-1}(Q) & \textnormal{ otherwise}.
\end{aligned}\right.
\]
In other words, we have $s_i-s_{i-1} = \lch_{i-1}(P)-\lch_{i-1}(Q)$ whenever $\lambda_{i-1}=\lambda_i$. (See \cite[Definition 5.8]{cpy18} and \cite[Theorem 5.10]{clp17} for the equivalent definitions.) It is easy to show that for tabloids $P, Q, R$ of the same shape, we have
$$\offset_{P,Q}+\offset_{Q,R} = \offset_{P,R}, \textup{ thus in particular } \offset_{P,Q}+\offset_{Q,P}=\offset_{P,P}=0.$$

\begin{example} \label{ex:1}
Let $P= \tableau[sY]{\ol{2}, \ol{4}, \ol{6} \\ \ol{3}, \ol{7}, \ol{8} \\ \ol{1}, \ol{5}, \ol{9}}$ and
$Q= \tableau[sY]{\ol{3}, \ol{5}, \ol{7} \\ \ol{1}, \ol{2}, \ol{8} \\ \ol{4}, \ol{6}, \ol{9}}$. 
Then $\lch_1(P) = 0, \lch_2(P) = 1, \lch_1(Q)=2$, and $\lch_2(Q)=0$. Thus it follows that $\offset_{P,Q} = (0,-2,-1)$.  
\end{example}

For $\lambda=(\lambda_1, \ldots, \lambda_r)$ and $\vv{\rho} = (\rho_1, \ldots, \rho_r)$, we define $\rev_\lambda(\vv{\rho})$ 
to be the integer vector obtained from $\vv{\rho}$ by reversing the order of elements corresponding to the parts of the same length in $\lambda$. For example, if $\lambda=(2,2,1,1,1)$ and $\vv{\rho}=(3,1,5,2,4)$ then we have $\rev_\lambda(\vv{\rho}) = (1,3,4,2,5)$. We say that $\vv{\rho} \in \bZ^{l(\lambda)}$ is dominant with respect to $(P,Q)$ if $-(\vv{\rho}-\offset_{P,Q}) \in \Dom(F_\lambda)$, or equivalently $\rev_\lambda(\vv{\rho}-\offset_{P,Q}) \in \Dom(F_\lambda)$. In other words, $\vv{\rho} \in \bZ^{l(\lambda)}$ is dominant with respect to $(P,Q)$ if and only if $\vv{\rho}-\offset_{P,Q}$ is ``anti-dominant''.

\begin{example} \label{ex:2}
 In the Example \ref{ex:1} the vector $\vv{\rho} = (2,0,2) \in \bZ^3$ is dominant with respect to $(P,Q)$ because $\vv{\rho} - \offset_{P,Q} = (2,2,3)$ is a nondecreasing sequence.
\end{example}

We set
\begin{align*}
\Omega &\colonequals \bigsqcup_{\lambda\vdash n} \RSYT(\lambda) \times \RSYT(\lambda) \times \bZ^{l(\lambda)},
\\\Odom &\colonequals \{(P, Q, \vv{\rho}) \in \Omega \mid \vv{\rho} \textup{ is dominant with respect to } (P,Q)\}.
\end{align*}
We define $\Phi \colon \extSn \rightarrow \Odom \colon w \mapsto (P(w), Q(w), \vv{\rho}(w))$ to be the bijection defined in \cite{cpy18} using the affine matrix-ball construction (abbreviated AMBC). Also, let $\Psi \colon \Omega \rightarrow \extSn$ be the surjection defined by the backward AMBC. Then by \cite[Theorem 5.12]{cpy18} we have $\Psi|_{\Odom} = \Phi^{-1}$. Both AMBC and backward AMBC were explained in great detail in both \cite{cpy18} and \cite{clp17}.

Before we investigate their definitions we recall some of their properties in terms of Kazhdan-Lusztig cells. It follows from \cite{shi91} and \cite{cpy18} that for each tabloid $P$, the union of fibers $\Psi(P,Q,\vv{\rho})$ for $(P,Q,\vv{\rho}) \in \Omega$ where we vary $Q$ and $\vv{\rho}$ are exactly a right cell of $\extSn$, denoted by $\lc^{-1}_P$. Similarly, if we fix a tabloid $Q$, the union of fibers is a left cell of $\extSn$, denoted by $\lc_Q$. Moreover, it essentially follows from \cite{shi86} that $w\in \tsc_\lambda$ if and only if the shape of both $P(w)$ and $Q(w)$ is $\lambda$. On the other hand, \cite[Proposition 3.1]{clp17} implies that if $\Phi(w) = (P, Q, \offset_{P,Q}+\vv{\rho})$ then $\Phi(w^{-1}) = (Q, P, \offset_{Q, P}-\rev_\lambda(\vv{\rho})).$ If we want to restrict to a non-extended affine symmetric group $\affSn$ defined to be
\[\affSn=\{w \in \extSn \mid w(1)+\cdots+w(n) = n(n+1)/2\},\]
then by \cite[Theorem 10.3]{cpy18} the same claims hold verbatim if we add the condition $\sum_i \rho_i = 0$ where $\vv{\rho}=(\rho_1, \rho_2, \ldots)$.

For $w\in \extSn$, we define 
$$R(w) = \{\ol{i} \in [n] \mid w(i)>w(i+1)\} \qquad \textup{(resp. } L(w) = \{\ol{i} \in [n] \mid w^{-1}(i)>w^{-1}(i+1)\}\textup{)},$$
called the right (resp. left) descent set of $w$. Also for a tabloid $T$, we define its $\tau$-invariant by
$$\tau(T) \colonequals \{\ol{i} \in [n] \mid \ol{i} \textup{ lies in a strictly higher row of $T$ than } \ol{i+1}\}.$$
Then by \cite[Proposition 3.6]{clp17}, we have $L(w) = \tau(P(w))$ and $R(w) = \tau(Q(w))$.

\subsection{Definition of $\Phi$ and $\Psi$} \label{sec:phipsi} Here we discuss the definitions of $\Phi$ and $\Psi$ and related notations briefly.

For $(x_1, y_1), (x_2, y_2)\in \bZ^2$, we say that $(x_1, y_1)$ is (weakly) southwest of $(x_2, y_2)$, denoted by $(x_1,y_1)\leq_{SW} (x_2, y_2)$, if $x_1\geq x_2$ and $y_1\leq y_2$. Similarly, we say that $(x_1, y_1)$ is (weakly) northwest of $(x_2, y_2)$, denoted by $(x_1,y_1)\leq_{NW} (x_2, y_2)$, if $x_1\leq x_2$ and $y_1\leq y_2$. Pictorially, we regard elements in $\bZ^2$ as located in the $xy$-plane where the $x$-axis directs southbound and $y$-axis directs eastbound, i.e. the whole plane is rotated clockwise by $90^\circ$ from the conventional direction.

We define a partial (affine) permutation to be an injection $w: X+n\bZ \rightarrow \bZ$ for some $X \subset [1,n]$ such that $w(i+kn)=w(i)+kn$ for $i \in X$ and $k \in \bZ$. Then its window notations is defined to be $w=[w(1), \ldots, w(n)]$ where we set $w(i)=\emptyset$ for $i \not\in X$. We say that a partial permutation $S: X +n\bZ \rightarrow \bZ$ is a stream if $S(i)<S(j)$ whenever $i<j$ (if $S(i), S(j) \neq \emptyset$), i.e. it is a chain with respect to the northwest ordering. The density of $S$ is defined to be the size of $X$. The altitude $a(S)$ of $S$ is defined to be $\sum_{x\in X} (\ceil{w(x)/n}-1)$ where $\ceil{t}$ is the smallest integer not smaller than $t$. (cf. \cite[Definition 2.11]{clp17}) If we set $D(x, y) = \ceil{y/n}-\ceil{x/n}$, called the block diagonal of $(x,y)$, then we have $a(S) = \sum_{x\in X}D(x,w(x))$.

For a partial permutation $w$ and a substream $S \subset w$, we call $S$ a channel of $w$ if the density of $S$ is maximum among all the substreams of $w$. Among all the channels of $w$, there exists a unique one $C$ such that for any channel $C' \subset w$ and any ball $(x, w(x)) \in C$, there exists $(y, w(y)) \in C'$ such that $x\geq y$ and $w(x)\leq w(y)$. We call such $C$ the southwest channel of $w$.



For a stream $S$, we say that $\tilde{d}: S \rightarrow \bZ$ is a proper numbering if it is a bijection and $\tilde{d}(x, S(x))<\tilde{d}(y, S(y))$ whenever $x<y$. It is clear that such a numbering is unique up to shift. For a channel $C$l of $w$ equipped with the proper numbering $\tilde{d}:C \rightarrow \bZ$, we define the channel numbering $d^C_w=d^C: w \rightarrow \bZ$ to be 
\[d^C(b) = \max \{\tilde{d}(b')+k \mid \textnormal{there exists a reverse path } (b'=b_0, b_1, \ldots, b_k=b) \textnormal{ of balls in } w\}.\]
Here, a reverse path is a sequence $(x_1, y_1), \ldots, (x_m, y_m)$ such that $x_1<\cdots <x_m$ and $y_1<\cdots<y_m$. Then $d^C$ is also uniquely determined once $\tilde{d}$ is fixed. When $C$ is the southwest channel of $w$ we call it the southwest channel numbering of $w$ and write $d=d^C$ (or $d_w=d_w^C$).


For a partial permutation $w$ and its channel $C \subset w$, we define the forward step $w \mapsto (\fw_C(w),\st_C(w))$ as follows. For each $m\in \bZ$, we label the balls of $w$ mapped to $m$ under $d=d^C_w$ by $(x_1, w(x_1)), \ldots, (x_k, w(x_k))$ so that $x_1>\cdots>x_k$. Then it is easy to show that $w(x_1)<\cdots <w(x_k)$. Set $Z_m=I(Z_m)\sqcup O(Z_m) \sqcup S(Z_m)$, called the zigzag labeled $m$, to be
\begin{gather*}
I(Z_m)= \{ (x_i, w(x_i)) \mid i \in [1,k]\},
\\O(Z_m)= \{ (x_i, w(x_{i+1})) \mid i \in [1,k-1]\}, \qquad S(Z_m) = \{ (x_k, w(x_{1}))\}.
\end{gather*}
Then we have $w \cap Z_m = I(Z_m)$. Now set $\fw_C(w) = \bigsqcup_{m\in \bZ}O(Z_m)$ and $\st(w) = \bigsqcup_{m\in \bZ}S(Z_m)$. Then $\fw_C(w)$ is again a partial permutation and $\st_C(w)$ is a stream whose density is equal to $C$. When $C$ is the southwest channel of $w$, we simply write $\fw(w)$ and $\st(w)$ instead of $\fw_C(w)$ and $\st_C(w)$.

We are ready to provide the precise definition of $\Phi: \extSn \rightarrow \Odom$. Starting with $w_0=w \in \extSn$, we successively set $w_{i+1}=\fw(w_{i})$ and $S_{i+1}=\st(w_{i})$ until we obtain an empty partial permutation. For each $S_{i}$ there exist $P_i, Q_i \subset [1,n]$ such that $S_i : P_i +n\bZ \rightarrow Q_i+n\bZ$ is a bijection. Now we set $\Phi(w) = (P, Q, \vv{\rho})$ where $P=(P_1, P_2,\ldots)$, $Q=(Q_1, Q_2,\ldots)$, and $\vv{\rho}=(a(S_1), a(S_2), \ldots)$ (the sequence of altitudes of $S_i$).

For a stream $S$ and a partial permutation $w$, we say that $S$ is compatible with $w$ if $S \cup w$ is still a partial permutation and the density of $S$ is not smaller than that of any substream of $w$. Then we define the backward numbering $d^{\bk,S}_w=d^{\bk,S}: w \rightarrow \bZ$ as follows. (cf. \cite[Definition 4.1]{cpy18}) Let $\tilde{d}:S \rightarrow \bZ$ be a proper numbering and for $(x, w(x)) \in w$ we let $d(x, w(x))=\max\{\tilde{d}(y, S(y)) \in S \mid y<x, S(y)<w(x)\}$. Now we repeat the following process:
\begin{enumerate}[label=\textbullet]
\item If $d(x, w(x))<d(y, w(y))$ for any $x, y$ such that $x<y$ and $w(x)<w(y)$ (if $w(x), w(y)\neq\emptyset$), then we terminate the process.
\item Otherwise, choose a ball $(x, w(x))$ such that:
\begin{enumerate}[label=-]
\item there exists a ball $(y,w(y))$ such that $d(x,w(x))\geq d(y,w(y))$, $x<y$, and $w(x)<w(y)$,
\item for any ball $(z, w(z))$ such that $z<x$ and $w(z)<w(x)$ we have $d(z,w(z))<d(x,w(x))$.
\end{enumerate}
\item For each $i \in \bZ$ we lower the value of $d(x+in, w(x)+in)$ by 1 and return to the first step.
\end{enumerate}
After this process is done, we set $d^{\bk, S}_w=d$. This numbering is always well-defined.

For a partial permutation $w$ and a compatible stream $S$, we define the backward step $(w, S) \mapsto \bk_S(w)$ as follows. Let $\tilde{d}$ be the proper numbering on $S$ and $d=d^{\bk,S}_w$ be the induced backward numbering on $w$. For each $m\in \bZ$, we label the balls of $w$ mapped to $m$ under $d$ by $(x_1, w(x_1)), \ldots, (x_k, w(x_k))$ so that $x_1>\cdots>x_k$. Also there exists a unique ball $(y, S(y))$ such that $\tilde{d}(y, S(y))=m$. Then it is easy to show that $w(x_1)<\cdots <w(x_k)$. Set $Z_m = I(Z_m)\sqcup O(Z_m)\sqcup S(Z_m)$, called the zigzag labeled $m$, to be
\begin{gather*}
I(Z_m)= \{ (x_{i+1}, w(x_i)) \mid i \in [1,k-1]\}\sqcup\{(x_1, S(y))\}\sqcup\{(y, w(x_k))\},
\\ O(Z_m) = \{ (x_i, w(x_{i})) \mid i \in [1,k]\}, \qquad S(Z_m) = \{ (y, S(y))\}.
\end{gather*}
 Then $w \cap Z_m = O(Z_m)$ and $S \cap Z_m = S(Z_m)$. We set $\bk_S(w) = \bigsqcup_{m \in \bZ} I(Z_m)$. Then $\bk_S(w)$ is a partial permutation each of whose channel has its density equal to that of $S$.

Now we define $\Psi: \Omega \rightarrow \extSn$ as follows. For $P=(P_1, \ldots, P_l)$, $Q=(Q_1, \ldots, Q_l)$, and $\vv{\rho} = (\rho_1, \ldots, \rho_l)$, we define $S_i$ to be the unique stream yielding a bijection from $P_i+n\bZ$ to $Q_i+n\bZ$ and whose altitude is given by $a(S_i)=\rho_i$. Now starting with the empty partial permutation $w_l$ we successively define $w_{i-1}=\bk_{S_i}(w_i)$ for $i \in [1,l]$. This process is well-defined and we set $\Psi(P, Q, \vv{\rho}) = w_0$.
\begin{example}
 For $P$ and $Q$ as in Example \ref{ex:1} and for $\vv{\rho} = (2,0,2)$ as in Example \ref{ex:2} we have $\Psi(P,Q,\vv{\rho}) = [3,7,14,2,18,4,19,8,6]$. 
\end{example}

\section{Canonical and anti-canonical left cells} \label{sec:cancells}
Let $w_0 \in \Sym_n$ be the longest element in $\Sym_n$ and define
$$\extSn_f \colonequals \{w \in \extSn \mid l(w)+l(w_0) = l(ww_0)\} = \{w \in \extSn \mid R(w) \subset \{\ol{n}\}\},$$
i.e. the set of elements each of which has minimal length in its left $\Sym_n$-coset. Then for each two-sided cell $\tsc \subset \extSn$, the intersection $\lcc \colonequals \tsc \cap \extSn_f$ is a single left cell and is called the canonical left cell of $\tsc$ \cite{lx88}. If $\tsc=\tsc_\lambda$, then we denote its canonical left cell by $\lcc_\lambda$. 

\begin{lem} Let $\Tc_\lambda$ be the reverse row superstandard tabloid of shape $\lambda$ with start at 1 in the sense of \cite[2.1]{clp17} (see the example below). If $w \in \lcc_\lambda$, then we have $Q(w)=\Tc_\lambda$.
\end{lem}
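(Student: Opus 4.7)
The plan is to reduce the claim to an elementary combinatorial statement about tabloids by invoking the $\tau$-invariant description of right descent sets recalled in the excerpt, namely $R(w) = \tau(Q(w))$.

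First, I will unwind the two conditions defining $\lcc_\lambda$. Since $\lcc_\lambda = \tsc_\lambda \cap \extSn_f$, the fact that $w \in \tsc_\lambda$ forces the shape of $Q(w)$ to be $\lambda$ (by the fact recorded in the excerpt that $w \in \tsc_\lambda$ iff both $P(w)$ and $Q(w)$ have shape $\lambda$), while $w \in \extSn_f$ means $R(w) \subseteq \{\ol{n}\}$ by definition. Translating through $R(w) = \tau(Q(w))$, we get the constraint
\[
\tau(Q(w)) \subseteq \{\ol{n}\},
\]
i.e.\ for every $i \in [1, n-1]$, the residue $\ol{i+1}$ does \emph{not} lie strictly below $\ol{i}$ in $Q(w)$.

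Second, I will show that this constraint, together with the shape being $\lambda$, forces $Q(w)$ to be a unique tabloid, namely $\Tc_\lambda$. Let $r_i \in [1, l(\lambda)]$ be the row of $Q(w)$ containing $\ol{i}$. The constraint reads $r_1 \geq r_2 \geq \cdots \geq r_n$, i.e.\ as $i$ increases, $\ol{i}$ only moves to higher (smaller-indexed) rows. Since row $j$ of shape $\lambda$ contains exactly $\lambda_j$ cells, this monotonicity uniquely determines which residues occupy which row: the bottom row must contain $\ol{1}, \ldots, \ol{\lambda_{l(\lambda)}}$, the next row up must contain the next $\lambda_{l(\lambda)-1}$ residues, and so on. Matching this against the definition of the reverse row superstandard tabloid $\Tc_\lambda$ with start at 1 yields $Q(w) = \Tc_\lambda$.

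There is essentially no obstacle beyond verifying the conventions: the only thing to check carefully is that the description of $\Tc_\lambda$ from \cite[2.1]{clp17} indeed places the smallest residues in the bottom row in increasing order as above, so that the unique tabloid of shape $\lambda$ with $\tau$-invariant contained in $\{\ol{n}\}$ is exactly $\Tc_\lambda$. Giving a small worked example (say $\lambda = (3,2,2)$) at the end would make the identification transparent.
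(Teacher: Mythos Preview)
Your proposal is correct and follows essentially the same approach as the paper's proof: both use $R(w)=\tau(Q(w))\subset\{\ol{n}\}$ and the observation that $\Tc_\lambda$ is the unique tabloid of shape $\lambda$ with $\tau$-invariant contained in $\{\ol{n}\}$. Your write-up is simply more detailed in justifying the uniqueness step, which the paper asserts without argument.
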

\begin{proof} By assumption we have $\tau(Q(w))=R(w) \subset \{\ol{n}\}$. Since $\Tc_\lambda$ is the unique tabloid of shape $\lambda$ which satisfies $\tau(\Tc_\lambda) \subset \{\ol{n}\}$, the result follows.
\end{proof}

\begin{example}
 We have $\Tc_{(4,3,1)} = \tableau[sY]{\ol{5}, \ol{6}, \ol{7}, \ol{8} \\ \ol{2}, \ol{3}, \ol{4} \\ \ol{1}}.$
\end{example}

This time, let $w_0^\lambda$ be the longest element in $\Sym_{\lambda'} \colonequals \Sym_{\lambda_1'}\times \Sym_{\lambda_2'} \times \cdots$. Thus for example $w_0^{(1,1,\dotsc,1)} = w_0$ and $w_0^{(n)} = id$. By direct calculation, we see that the shape of $P(w_0^\lambda)$ and $Q(w_0^\lambda)$ is $\lambda$, thus $w_0^\lambda \in \tsc_\lambda$.

\begin{lem}\label{lem:antican} Let $\Ta_\lambda$ be the standard Young tableau of shape $\lambda$ with $\{1, 2, \ldots, \lambda_1'\}$ on the first column, $\{\lambda_1'+1, \ldots, \lambda_1'+\lambda_2'\}$ on the second column, etc. (See the example below.) Then we have $\Phi(w_0^\lambda)=(\Ta_\lambda, \Ta_\lambda, 0)$.
\end{lem}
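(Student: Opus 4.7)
The plan is to compute $\Phi(w_0^\lambda)$ directly by iterating the forward step $\fw$, tracking the streams emitted, and matching them against the rows of $\Ta_\lambda$. Set $a_j = 1 + \lambda_1' + \cdots + \lambda_{j-1}'$ and $b_j = \lambda_1' + \cdots + \lambda_j'$ for $1 \leq j \leq \lambda_1$, so that $w_0^\lambda$ reverses each of the intervals $[a_j, b_j]$. In the window, the balls of $w_0^\lambda$ decompose into $\lambda_1$ antidiagonals: the $j$-th antidiagonal is $\{(a_j + i, b_j - i) : 0 \leq i \leq \lambda_j' - 1\}$.

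Setting $w^{(0)} = w_0^\lambda$, I will show by induction on $k = 0, 1, \ldots, \lambda_1'$ that the partial permutation $w^{(k)}$ obtained after $k$ iterations of $\fw$ has balls
\[ \bigsqcup_{j \,:\, \lambda_j' > k} \bigl\{(a_j + k + i,\; b_j - i) : 0 \leq i \leq \lambda_j' - k - 1\bigr\}, \]
so that each surviving block (one for each $j$ with $\lambda_j' > k$) is still an antidiagonal, now with its top-left corner shifted from $a_j$ down to $a_j + k$. To carry out the inductive step, I will analyze the forward step on $w^{(k)}$ in three stages. First, the southwest channel of $w^{(k)}$ is $\{(b_j, a_j + k) : j \leq \lambda_{k+1}\}$ and has density $\lambda_{k+1}$: any substream contains at most one ball per block because the balls within a block form an antidiagonal, and there are exactly $\lambda_{k+1}$ surviving blocks, each contributing its southwestmost ball. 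Second, the induced channel numbering $d$ is constant on each block and takes the value $j$ (up to the standard periodic shift by $\lambda_{k+1}$) on block $j$; the upper bound uses the same at-most-one-ball-per-block constraint applied across periods, and a matching reverse path is obtained by visiting the southwestmost ball of each intermediate block. Third, the zigzag $Z_j$ therefore coincides with the whole of block $j$, so plugging the explicit list of balls into the formulas of Section~\ref{sec:phipsi} gives $S(Z_j) = \{(a_j + k,\, a_j + k)\}$ and $O(Z_j) = \{(a_j + k + 1 + i,\, b_j - i) : 0 \leq i \leq \lambda_j' - k - 2\}$. Taking unions over $j$, the stream emitted at this iteration is $S_{k+1} = \{(a_j + k,\, a_j + k) : j \leq \lambda_{k+1}\}$ and $w^{(k+1)}$ has the claimed form, completing the induction.

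Each $S_{k+1}$ is supported on the main diagonal with entries in $[1, n]$, so its altitude is $0$. Its positions (which equal its values) form $\{a_j + k : j \leq \lambda_{k+1}\}$, which is exactly the $(k+1)$-st row of $\Ta_\lambda$ by construction (the $(k+1)$-st entry of every column of length at least $k+1$). After $\lambda_1'$ iterations the partial permutation is empty, and comparing the collected data with the definition of $\Phi$ in Section~\ref{sec:phipsi} yields $\Phi(w_0^\lambda) = (\Ta_\lambda, \Ta_\lambda, 0)$.

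The main technical obstacle is the second stage above --- the analysis of the channel numbering. It requires a careful maximization over increasing reverse paths in the affine-periodic ball configuration: one must both exhibit a path realizing the claimed $d$-value and rule out any longer one, both of which rely on the fact that the balls within any single block of $w^{(k)}$ form a decreasing antidiagonal and hence at most one of them can appear on any increasing path through a given period.
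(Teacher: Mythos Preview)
Your proof is correct and takes a genuinely different route from the paper's. You run the AMBC on $w_0^\lambda$ explicitly, tracking each forward step and the emitted streams by hand. The paper instead argues indirectly: having already noted (just before the lemma) that the shape of $Q(w_0^\lambda)$ is $\lambda$, it computes $R(w_0^\lambda) = [n] \setminus \{\ol{\lambda_1'}, \ol{\lambda_1'+\lambda_2'}, \ldots\}$, invokes the identity $R(w) = \tau(Q(w))$, and observes that $\Ta_\lambda$ is the unique tabloid of shape $\lambda$ with this $\tau$-invariant to conclude $Q(w_0^\lambda) = \Ta_\lambda$; then the involution symmetry of AMBC gives $P(w_0^\lambda) = \Ta_\lambda$, and membership in $\Sym_n$ forces $\vv{\rho}(w_0^\lambda) = 0$. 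Your computation is longer but self-contained: it does not appeal to the descent/$\tau$-invariant correspondence, the involution symmetry, or the vanishing of $\vv{\rho}$ on $\Sym_n$, and it establishes the shape along the way rather than assuming it. The paper's argument is much shorter precisely because it outsources all of these facts to prior structural results about AMBC.
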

\begin{proof} It is clear that $R(w) = [n]-\{\lambda_1', \lambda_1'+\lambda_2', \ldots\}$. Since $\Ta_\lambda$ is the unique tabloid of shape $\lambda$ which satisfies $\tau(\Ta_\lambda) = [n]-\{\lambda_1', \lambda_1'+\lambda_2', \ldots\}$, we have $Q(w_0^\lambda)=\Ta_\lambda$. Since $w_0^\lambda$ is an involution, we also have $P(w_0^\lambda) =\Ta_\lambda$ by \cite[Proposition 3.1]{clp17}. Moreover, $w_0^\lambda \in \Sym_n$ thus $\vv{\rho}(w) = 0$ by \cite[Theorem 10.2]{cpy18}.
\end{proof}

We define $\lca_\lambda \subset \tsc_\lambda$ to be the left cell containing $w_0^\lambda$, called the anti-canonical left cell of $\tsc_\lambda$. Then the lemma above implies that $w \in \lca_\lambda$ if and only if $Q(w)=\Ta_\lambda$.

\begin{example}
 We have $\Ta_{(4,3,1)} = \tableau[sY]{\ol{1}, \ol{4}, \ol{6}, \ol{8} \\ \ol{2}, \ol{5}, \ol{7} \\ \ol{3}}.$
\end{example}

\section{Bijection of Xi and AMBC} \label{sec:xibij}
Our goal in this section is to show that the bijection $\vv{\varepsilon} \colon (\lca_\lambda)^{-1} \cap \lca_\lambda \rightarrow \Dom(F_\lambda)$ defined by Xi \cite{xi02} can be interpreted in terms of AMBC. First we describe his bijection $\vv{\varepsilon}: (\lca_\lambda)^{-1} \cap \lca_\lambda  \rightarrow \Dom(F_\lambda)$ following \cite[5.2.1]{xi02}. 
We start with the following definition.
\begin{defn} For $\lambda = (\lambda_1, \lambda_2, \ldots) \vdash n$, we say that $\{S_1, S_2, \ldots, S_{l(\lambda)}\}$ is a complete stream family of $w\in \tsc_\lambda$ (or {\it {complete antichain family}} in the sense of \cite{xi02}) if $S_i$ are pairwise disjoint streams, the density of each $S_i$ is equal to $\lambda_i$, and $\bigsqcup_{i=1}^{l(\lambda)} S_i = \cB_w$.
\end{defn}

Let $\{S_1, S_2, \ldots, S_{l(\lambda)}\}$ be a complete stream family of $w \in (\lca_\lambda)^{-1} \cap \lca_\lambda$, which always exists by \cite[Theorem 5.1.12]{xi02}. 
After rearranging $S_i$'s if necessary, we may assume that $a(S_i)\geq a(S_{i+1})$ whenever $\lambda_i = \lambda_{i+1}.$ Now we define $\vv{\varepsilon}(w)=(a(S_1), a(S_2), \ldots, a(S_{l(\lambda)})) \in \Dom(F_\lambda)$, which gives the bijection $\vv{\varepsilon}: (\lca_\lambda)^{-1} \cap \lca_\lambda  \rightarrow \Dom(F_\lambda)$ of Xi. By \cite[Lemma 5.2.4]{xi02}, this value does not depend on the choice of the complete stream family $\{S_1, S_2, \ldots, S_l\}$ of $w$.

We state the main theorem of this section.
\begin{thm}  \label{thm:xibij} For $w\in (\lca_\lambda)^{-1} \cap \lca_\lambda$, we have $\vv{\varepsilon}(w) = \rev_\lambda(\vv{\rho}(w))$.
\end{thm}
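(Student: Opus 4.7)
By \cite[Lemma 5.2.4]{xi02} the vector $\vv\varepsilon(w)$ is determined by the multiset of altitudes of any complete stream family of $w$, arranged so as to be weakly non-increasing within each block of indices on which $\lambda_i$ is constant. Hence the plan is to check that $\rev_\lambda(\vv\rho(w))$ has the same block-wise structure: it is non-increasing inside every such block, and its block-wise multiset of entries coincides with the altitude multiset of some complete stream family of $w$.

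\textbf{Ordering from dominance.} For $w \in (\lca_\lambda)^{-1} \cap \lca_\lambda$, Lemma~\ref{lem:antican}, applied to both $w$ and $w^{-1}$ via the AMBC symmetry $\Phi(w^{-1})=(Q(w),P(w),\ldots)$, gives $P(w)=Q(w)=\Ta_\lambda$, and therefore $\offset_{P(w),Q(w)}=0$. The membership $(P(w),Q(w),\vv\rho(w))\in\Odom$ then says $-\vv\rho(w)\in\Dom(F_\lambda)$, which unpacks as the requirement that $\rho_i(w)$ is weakly non-decreasing on every block of indices where $\lambda_i$ takes a common value. Applying $\rev_\lambda$ reverses each such block and turns non-decreasing into non-increasing, matching Xi's normalisation $a(S_i)\geq a(S_{i+1})$ when $\lambda_i=\lambda_{i+1}$.

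\textbf{Multiset identification and main obstacle.} It remains to exhibit a complete stream family $\{T_1,\ldots,T_{l(\lambda)}\}$ of $w$ whose altitudes, as a block-wise multiset, agree with those of $\vv\rho(w)$. A natural candidate comes from the forward AMBC trajectory: at step $i$ the southwest channel $C_i\subset w_{i-1}$ has density $\lambda_i$, and a short zigzag computation shows $a(C_i)=a(\st(w_{i-1}))=\rho_i(w)$, since in each zigzag the southwestmost corner $(x_k,w(x_1))$ shares its $y$-value $w(x_1)$ with the channel ball $(x_1,w(x_1))$. The main technical obstacle is to lift the sequence $C_1,\ldots,C_{l(\lambda)}$ back through the forward steps $\fw$ to substreams $T_1,\ldots,T_{l(\lambda)}\subseteq\cB_w$ that partition the balls of $w$ while preserving each altitude; equivalently, one can define $T_i$ greedily as the southwest channel of the residual partial permutation $w\setminus(T_1\sqcup\cdots\sqcup T_{i-1})$ and verify inductively, by a zigzag-by-zigzag comparison, that this family's altitudes are exactly $\rho_1(w),\ldots,\rho_{l(\lambda)}(w)$. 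Once this altitude-preserving construction is in place, combining it with the ordering argument above yields $\vv\varepsilon(w)=\rev_\lambda(\vv\rho(w))$, proving Theorem~\ref{thm:xibij}.
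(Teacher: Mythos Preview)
Your ordering paragraph is fine: once $P(w)=Q(w)=\Ta_\lambda$ the dominance condition does give exactly the block-wise monotonicity needed to match Xi's normalisation.

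The gap is in the multiset identification. Your ``short zigzag computation'' does not establish $a(C_i)=\rho_i$. Sharing a $y$-value in each zigzag is not enough: the altitude of a stream $S$ is $\sum_{b\in R} D(b)$ for a set $R$ of representatives, and $D(x,y)=\lceil y/n\rceil-\lceil x/n\rceil$ depends on both coordinates. Concretely, the southwest-channel ball $(x_1^{(m)},w(x_1^{(m)}))$ and the back corner-post $(x_k^{(m)},w(x_1^{(m)}))$ of the same zigzag $Z_m$ have the same $y$ but different $x$, and the representatives with $x\in[1,n]$ for $C$ and for $\st(w)$ come from \emph{different} ranges of $m$ (since $x_1^{(m)}>x_k^{(m)}$). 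So there is no reason, in general, for the two altitudes to agree. Your alternative greedy construction---peeling off southwest channels of the residual---is likewise only asserted; whether its altitude sequence matches $\vv\rho(w)$ is exactly the content of the theorem, and the phrase ``zigzag-by-zigzag comparison'' is not a proof.

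More tellingly, the anti-canonical hypothesis appears in your argument only to pin down $P$ and $Q$; it plays no role in your multiset step. In the paper's proof it is essential there. The paper proceeds by induction on $l(\lambda)$: given a complete stream family $\{S_2',\ldots,S_l'\}$ of $\fw(w)$, Lemma~\ref{lem:strfam} manufactures a family $\{S_1,\ldots,S_l\}$ of $w$ with $S_1$ a channel of altitude $\rho_1$ and $S_i^\square=S_i'^\square$ for $i\ge 2$, where $S^\square$ records which boxes $A\times B$ (for $A,B\in\cP_\lambda$) the stream meets. The anti-canonical condition, via Lemma~\ref{lem:ordered}, forces each such box to contain at most one ball of each stream, so $S_i^\square=S_i'^\square$ immediately gives $a(S_i)=a(S_i')$---the altitude depends only on which boxes are visited. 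This box-counting device is what replaces your unproven altitude claims, and it is not available without the hypothesis $w\in(\lca_\lambda)^{-1}\cap\lca_\lambda$.
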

The rest of this section is devoted to its proof. 
To this end, first we define 
$$\cP_\lambda = \left\{\left[an+ \sum_{j=1}^{i-1} \lambda'_j+1, an+ \sum_{j=1}^i\lambda'_j\right] \subset \bZ \mid a \in \bZ, 0\leq i \leq \lambda_1\right\}.$$
For example, $\cP_{(3,2,1,1)}$ consists of $\{1,2,3,4\}, \{5,6\}, \{7\}$ and their shifts by multiples of $7$. It is clear that $\cP_{\lambda}$ is a partition of $\bZ$.
\begin{lem} \label{lem:ordered}Let $w \in \lca_\lambda$. 
\begin{enumerate}[label=\textup{(\alph*)}]
\item $\cB_w \cap (A \times \bZ)$ for any $A\in P_\lambda$ is totally ordered with respect to the southwest ordering.
\item For any stream $S \subset \cB_w$, $\#(S \cap (A\times \bZ)) \leq 1$.
\end{enumerate}
\end{lem}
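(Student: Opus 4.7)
The plan is to reduce both parts to the right descent structure of $w$, which is controlled by $Q(w) = \Ta_\lambda$. Since $w \in \lca_\lambda$, Lemma~\ref{lem:antican} together with \cite[Proposition 3.6]{clp17} gives $R(w) = \tau(\Ta_\lambda)$, and the column-by-column filling of $\Ta_\lambda$ makes this descent set completely transparent. Writing $c_j = \sum_{k=1}^j \lambda_k'$, the residues $\ol{c_{j-1}+1}, \ldots, \ol{c_j}$ occupy column $j$ of $\Ta_\lambda$ from top to bottom, so for every $r$ with $c_{i-1}+1 \leq r \leq c_i - 1$ the residue $\ol{r}$ lies strictly above $\ol{r+1}$, putting $\ol{r}$ in $\tau(\Ta_\lambda)$ and hence giving $w(r) > w(r+1)$. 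Quasi-periodicity $w(i+n) = w(i) + n$ propagates this inequality to every integer shift.

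The first step I would carry out is to identify the blocks of $\cP_\lambda$ with columns of $\Ta_\lambda$: a block $A = [an+c_{i-1}+1, an+c_i]$ consists of exactly the integers whose residues modulo $n$ form column $i$ of $\Ta_\lambda$. Granted this, part (a) is immediate: by the descent computation above, $w$ is strictly decreasing on consecutive elements of $A$, hence strictly decreasing on all of $A$; and this is precisely the condition that $\{(j, w(j)) : j \in A\}$ is a chain under $\leq_{SW}$.

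Part (b) then follows from (a) almost for free. A stream is by definition a chain under $\leq_{NW}$, so if two distinct balls $(i, w(i))$ and $(j, w(j))$ of $S$ lay in $A \times \bZ$, we could order them $i<j$ and would have $w(i) < w(j)$ from the stream condition, contradicting the $\leq_{SW}$-chain conclusion of (a). Hence at most one ball of $S$ meets $A \times \bZ$.

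I do not expect a genuine obstacle: the whole argument is a bookkeeping exercise built on the identity $R(w) = \tau(\Ta_\lambda)$. The only point requiring a bit of care is matching the definition of $\cP_\lambda$ (phrased in terms of the conjugate $\lambda'$) with the column structure of $\Ta_\lambda$, but this is immediate upon comparing definitions.
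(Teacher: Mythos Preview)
Your proposal is correct and follows essentially the same route as the paper: deduce $R(w)=\tau(\Ta_\lambda)=[n]\setminus\{\ol{\lambda_1'},\ol{\lambda_1'+\lambda_2'},\ldots\}$ from $Q(w)=\Ta_\lambda$, conclude that $w$ is strictly decreasing on each block $A\in\cP_\lambda$, and then derive (a) and (b) from that. The paper's proof is terser (it simply says ``the statement easily follows''), while you spell out the deduction of (b) from (a) via the incompatibility of $\leq_{NW}$ and $\leq_{SW}$ chains, but the underlying argument is identical.
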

\begin{proof} Since $w\in \lca_\lambda$, we have $Q(w) = \Ta_\lambda$ which means that $R(w) = \tau(Q(w)) = [n] - \{\ol{\lambda'_1}, \ol{\lambda_1'+\lambda_2'}, \ldots\}$. It implies that
$w( \sum_{j=1}^{i-1} \lambda'_j+1 )>w(\sum_{j=1}^{i-1} \lambda'_j+2)>\cdots > w( \sum_{j=1}^i\lambda'_j)$ for any $0\leq i \leq \lambda_1$. Now the statement easily follows from this.
\end{proof}

For a stream $S \subset \cB_w$, we define
$$S^\square\colonequals \{ (A,B) \in P_\lambda^2 \mid S \cap (A\times B) \neq \emptyset\}.$$
Suppose that $w \in (\lca_\lambda)^{-1} \cap \lca_\lambda$. Then the lemma above applied to both $w$ and $w^{-1}$ implies the following: for any stream $S \subset \cB_w$ we have $(A,B) \in S^\square \Leftrightarrow \#(S\cap(A\times B))=1$. Also for any $(A,B), (C,D) \in S^\square$, we have either $A<C, B<D$ or $A>C, B>D$. (Here $X<Y$ means $x<y$ for any $x\in X$ and $y\in Y$.) In other words, $S^\square$ is totally ordered with respect to northwest ordering.

%


The next lemma is the key step of the proof of the main theorem in this section. 
\begin{lem} \label{lem:strfam} Let $w\in (\lca_\lambda)^{-1} \cap \lca_\lambda$, and assume that $\{S_2', S_3', \ldots, S_{l(\lambda)}'\}$ is a complete stream family of the partial permutation $\fw(w)$. Then there exists a complete stream family $\{S_1, S_2, \ldots, S_{l(\lambda)}\}$ of $w$ such that
\begin{enumerate}[label=\textup{(\arabic*)}]
\item $S_1$ is a channel of $\cB_w$, 
\item $a(S_1)=\rho_1$ where $\vv{\rho}(w) = (\rho_1, \rho_2, \ldots)$, and
\item $S_i^\square = S_i'^\square$.
\end{enumerate}
\end{lem}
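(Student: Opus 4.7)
The plan is to build $\{S_1, \dotsc, S_{l(\lambda)}\}$ directly from the zigzag decomposition of $\cB_w$ under the southwest-channel numbering, and then to verify each of the three conditions. Let $C$ be the southwest channel of $\cB_w$ and write the balls of zigzag $Z_m$ as $(x_1^{(m)}, w(x_1^{(m)})), \dotsc, (x_{k(m)}^{(m)}, w(x_{k(m)}^{(m)}))$ with $x_1^{(m)}>\dotsb>x_{k(m)}^{(m)}$ and $w(x_1^{(m)})<\dotsb<w(x_{k(m)}^{(m)})$, following the forward-step conventions of Section~\ref{sec:phipsi}. I define
\[
S_i \colonequals \{(x_i^{(m)}, w(x_i^{(m)})) : m \in \bZ,\ k(m) \geq i\}, \qquad i = 1, \dotsc, l(\lambda),
\]
so that $S_1 = C$. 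Because the sorted multiset of zigzag sizes $(k(m))$ in one period coincides with the conjugate partition $\lambda'$, each $S_i$ has density exactly $\lambda_i$ and $\bigsqcup_i S_i = \cB_w$, giving (1) immediately.

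The key structural claim needed to verify (2) and (3) is that \emph{each zigzag $Z_m$ lies entirely inside a single block $(A,B)\in \cP_\lambda^2$}. I would deduce this by combining Lemma~\ref{lem:ordered}, applied symmetrically to $w$ and $w^{-1}$, with the southwest-minimality of $C$: the balls of $w$ inside any row block (resp.\ column block) of $\cP_\lambda$ form an SW-chain, and so do zigzags. A zigzag spanning multiple row blocks would let one splice together in-block SW-chains taken from several zigzags to construct a substream of $\cB_w$ with density strictly greater than $\lambda_1$, contradicting $w \in \tsc_\lambda$; the column analogue is symmetric. Granted this, condition (2) is a quick computation: since $\st(w) = \{(x_{k(m)}^{(m)}, w(x_1^{(m)})) : m\}$,
\[
a(\st(w)) - a(S_1) = \sum_m \bigl(\lceil x_1^{(m)}/n\rceil - \lceil x_{k(m)}^{(m)}/n\rceil\bigr)
\]
summed over one period, which vanishes because all $x_j^{(m)}$ for fixed $m$ lie in one fundamental window.

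For condition (3), the forward step sends $Z_m$ to $O(Z_m) = \{(x_j^{(m)}, w(x_{j+1}^{(m)})) : 1\leq j\leq k(m)-1\}$; since $Z_m$ lies in a single block $(A,B)$ so does $O(Z_m)$. Hence the block pattern of my $S_i$ agrees with the block pattern of the $i$-th stream arising from the zigzag decomposition of $\fw(w)$. Moreover the analogue of Lemma~\ref{lem:ordered} holds for $\fw(w)$ — each block contains at most one of its balls — so an arbitrary complete stream family $\{S_i'\}$ of $\fw(w)$ is determined up to relabeling by its collection of block patterns. After reindexing the given $\{S_i'\}$ to match my zigzag-derived family, the equality $S_i^\square=S_i'^\square$ holds for all $i\geq 2$.

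The main obstacle is the structural claim that each zigzag lies inside a single $\cP_\lambda$-block: the computations in (2) and (3) both reduce to this fact, while the other verifications (density counts, partitioning $\cB_w$, stream property of each $S_i$) are relatively routine once the zigzag decomposition is in hand. I expect the rigorous proof of this claim to require a careful exchange/minimality argument playing the southwest-minimality of $C$ against the row/column structure provided by Lemma~\ref{lem:ordered}.
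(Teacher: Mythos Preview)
Your central structural claim---that each forward zigzag $Z_m$ lies in a single block of $\cP_\lambda^2$---is false, and the paper's own Example (the $n=16$ element $w=[11,5,4,3,2,-9,13,10,9,8,1,15,12,22,14,16]$ with $\lambda=(5,4,2,2,2,1)$) already exhibits a counterexample. Tracing the outer corner-posts listed for $\fw(w)$ there, one finds a zigzag $\{(11,1),(5,2),(4,3),(3,4),(2,5),(1,11)\}$ whose $x$-coordinates straddle the two row blocks $\{1,\dots,6\}$ and $\{7,\dots,11\}$, and another zigzag $\{(22,7),(10,8),(9,9),(8,10),(7,13)\}$ spanning two column blocks. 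Consequently your $S_1$ (the SW-most ball of each zigzag, which is indeed the southwest channel $C$) need not have altitude $\rho_1$: in the example one computes $a(C)=-1$ while $\rho_1=a(\st(w))=0$. Your sketched justification---``splice together in-block SW-chains \dots\ to construct a substream of density $>\lambda_1$''---is also incoherent as stated, since SW-chains are antichains for the northwest order and cannot be concatenated into a stream. So conditions (2) and (3) both collapse.

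The paper avoids this obstacle by working with the \emph{backward} step instead. It takes $S=\st_{\rho_1}(\Ta_1,\Ta_1)$, observes (via Lemma~\ref{lem:ordered} applied to $\fw(w)$, which is again anti-canonical on the residual alphabet) that the backward numbering $d^{\bk,S}$ on $\cB_{\fw(w)}$ is simply ``which L-shaped region between consecutive balls of $S$ do you lie in'', and then shows that each backward zigzag satisfies $\#((A\times B)\cap I(Z_i))=\#((A\times B)\cap(O(Z_i)\cup S(Z_i)))$ for every block $(A,B)$. This is strictly weaker than ``$Z_i$ lies in one block'' but is exactly what is needed: choosing any bijection inside each $A\times B$ between $\cB_w$ and $\cB_{\fw(w)}\cup S$ transports the given family $\{S_2',\dots\}\cup\{S\}$ to a family $\{S_1,\dots\}$ of $w$ with the required block patterns, and $a(S_1)=a(S)=\rho_1$ follows because $S_1^\square=S^\square$. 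The point is that the paper never attempts to pick $S_1$ canonically from the forward zigzag structure; the channel $S_1$ it produces is in general \emph{not} the southwest channel.
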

\begin{proof} 
Recall the construction of the backward AMBC algorithm. In particular, $\cB_w$ is obtained by following the steps below.
\begin{enumerate}[label=\textup{(\alph*)}]
\item Let $S\colonequals\st_{\rho_1}(\Ta_1, \Ta_1)$ and give $S$ some proper numbering; here $\Ta_\lambda=(\Ta_1, \Ta_2, \ldots)$. (Here, $\st_{\rho_1}(\Ta_1, \Ta_1)$ is the unique stream of altitude $\rho_1$ which maps $\Ta_1+n\bZ$ to itself. See \cite[Section 3.4]{cpy18} for the definition of $\st$.)
\item Calculate the backward numbering $d_w^{\textup{bk}, S}$ on $\cB_{\fw(w)}$. 
\item For each $i \in \bZ$, consider the zigzag $Z_i$ labeled $i$ whose outer corner-posts are balls in $\cB_{\fw(w)}$ labeled by $i$.
\item The union of all inner corner-posts of $Z_i$ for all $i \in \bZ$ is $\cB_w$.
\end{enumerate}

Now suppose that some proper numbering on $S=\st_{\rho_1}(\Ta_1, \Ta_1)$ is given, say $S=\{S^{(i)}\mid i\in \bZ\}$ where $S^{(i)}= (a_i, b_i) \in S$ is the ball labeled $i$. We claim that if $(x,y) \in \cB_{\fw(w)}$ satisfies $(x,y) \geq_{NW}S^{(i)}$ but $(x,y) \not\geq_{NW}S^{(i+1)}$, then $d_w^{\textup{bk}, S}(x,y) = i$. In other words, on the construction of the backward numbering in \ref{sec:phipsi} the iterating process terminates without changing the initial numbering, say $d_0$. Indeed, suppose $(a, b), (c,d) \in \cB_{\fw(w)}$ satisfies $d_0(a,b) = d_0(c,d)$. In other words,
$$(a,b), (c,d) \in \{(x, y) \in \bZ^2 \mid x\geq a_i, y\geq b_i\}-\{(x, y) \in \bZ^2 \mid x\geq a_{i+1}, y\geq b_{i+1}\}.$$
However, the balls in
$$\cB_{\fw(w)} \cap\left(\{(x, y) \in \bZ^2 \mid x\geq a_i, y\geq b_i\}-\{(x, y) \in \bZ^2 \mid x\geq a_{i+1}, y\geq b_{i+1}\}\right)$$
are totally ordered with respect to southwest ordering by Lemma \ref{lem:ordered}. Note that this lemma applies to $\fw(w)$ since $\Phi(\fw(w)) = ((\Ta_\lambda)_{\geq 2}, (\Ta_\lambda)_{\geq 2}, \vv{\rho}_{\geq 2})$ where $(\Ta_\lambda)_{\geq 2} =(\Ta_2,\Ta_3, \ldots)$ is also an anti-canonical tableau when restricted to indices not in $\Ta_1$. Therefore, if $(a,b) \leq_{NW}(c,d)$ then $(a,b)=(c,d)$ and the claim follows by looking at the construction of the backward numbering.


Now let $I(Z_i)$ be the set of inner corner-posts of $Z_i$, $O(Z_i)$ be the set of outer corner-posts of $Z_i$, and $S(Z_i)=\{S^{(i)}\}$ as in \ref{sec:phipsi}. Then the backward AMBC algorithm replaces balls in $O(Z_i) \cup S(Z_i)$ with those in $I(Z_i)$. By Lemma \ref{lem:ordered}, it is easy to check that for $(x,y) \in I(Z_i)$ and $(z,w) \in O(Z_i)$ we have
\begin{enumerate}[label=$\bullet$]
\item if $y=w$ and $a_i \leq x,z <a_{i+1}$, then $z=x+1$ and
\item if $x=z$ and $b_i \leq y,w <b_{i+1}$, then $w=y+1$.
\end{enumerate}
From this, it follows that for any $A, B \in \cP_\lambda$ we have
$$\#((A\times B) \cap I(Z_i)) = \#((A\times B) \cap (O(Z_i) \cup S(Z_i))).$$
In other words, we have
\begin{enumerate}[label=$\bullet$]
\item $\#((A\times B) \cap \cB_w) = \#((A\times B) \cap \cB_{\fw(w)})+1$ if $(A,B) \in S^\square$, and
\item $\#((A\times B) \cap \cB_w) = \#((A\times B) \cap \cB_{\fw(w)})$ otherwise.
\end{enumerate}

Now we can choose $S_1, S_2, \ldots$ so that $S_1^\square = S^\square$ and $S_i^\square = S_i'^\square$ for $2\leq i \leq l(\lambda)$. Indeed, for any pair $A, B \in \cP_\lambda$ and for any stream $S_i'$ we know that at most one element of this stream lies in $A \times B$. We also know the same for the stream $S$, as can be seen by examining the rows of $\Ta_{\lambda}$. Let us arbitrarily biject elements of $O(Z_i) \cup S(Z_i)$ lying inside a particular $A \times B$ with those of $I(Z_i)$ lying inside the same $A \times B$. This maps each of the streams $S$, $S_i'$ into a new stream $S_1$, $S_i$. It is clear that $\{S_1, S_2, \ldots\}$ satisfy the desired properties.
\end{proof}

To illustrate this proof, we give the following example. 

\begin{example}
 Let $n = 16$ and $w = [11,5,4,3,2,-9,13,10,9,8,1,15,12,22,14,16]$. Then $\fw(w) = [\emptyset, 11,5,4,3,-8,\emptyset, 13,10,9,2,\emptyset,15,\emptyset,22,\emptyset]$. It is easy to see that $w \in (\lca_\lambda)^{-1} \cap \lca_\lambda$ for $\lambda = (5,4,2,2,2,1)$. Let $A =B = \{7,8,9,10,11\}$. Then 
 $$(A\times B) \cap \cB_w = \{(8,10), (9,9), (10,8)\} \text{  and } (A\times B) \cap (\cB_{\fw(w)} \cup S) = \{(7,7), (9,10), (10,9)\}.$$ We 
 choose an arbitrary bijection between the two sets, and repeat it for other pairs $(A,B)$. Then streams of any complete stream family of $\cB_{\fw(w)}$ together with $S$ biject to streams of a complete stream family of $\cB_{w}$.
\end{example}

We are ready to prove the main theorem in this section.
\begin{proof}[Proof of Theorem \ref{thm:xibij}]
We proceed by induction on $l=l(\lambda) = \lambda'_1$. When $l=1$, we have $w = \shift^d$ for some $d\in\bZ$ and in this case it is easy to see that $\varepsilon(w) = \rev_\lambda(\vv{\rho})(w)=(d)$. (Recall that $\shift$ is the shift permutation $[2,3,\ldots, n, n+1]$.) In general, we choose a complete stream family $\{S_2', S_3', \ldots, S_l'\}$ of $\fw(w)$ and $\{S_1, S_2, \ldots, S_l\}$ of $w$ as Lemma \ref{lem:strfam}. Then $\rho_1=a(S_1)$, and for any $2\leq i \leq l$ we have
$$a(S_i)= \sum_{(x,y) \in S_i, 1\leq x \leq n} \ceil{\frac{y}{n}}-1=\sum_{(x,y) \in S_i', 1\leq x \leq n} \ceil{\frac{y}{n}}-1=a(S_i')$$
since $S_i^\square = S_i'^\square$. In other words, as a multiset we have
\begin{align*}
\{\rho_1, \rho_2, \ldots, \rho_l\} &= \{a(S_1)\} \cup \{\rho_2, \ldots, \rho_l\}
\\&= \{a(S_1)\} \cup \{a(S_2'), \ldots, a(S_l')\}
\\&= \{a(S_1)\} \cup \{a(S_2), \ldots, a(S_l)\}
\end{align*}
by induction assumption. After reordering $a(S_i)$ if necessary, we have $\vv{\varepsilon}(w)=\rev_\lambda(\vv{\rho}(w))$ as desired.
\end{proof}

\section{Shift, Knuth moves, and AMBC}
In this section we discuss how multiplication by $\shift=[2,3,\ldots, n,n+1]$ and Knuth moves behave under the AMBC. We start with some definitions.
\begin{defn} For $\lambda= (\lambda_1, \ldots, \lambda_r)$ and $\vv{\rho} = (\rho_1, \ldots, \rho_r)$, we say that $\vv{\rho}$ is determinantal (with respect to $\lambda$) if $\rho_i=\rho_j$ whenever $\lambda_i = \lambda_j$.
\end{defn}
Note that if $\vv{\rho}$ is determinantal with respect to $\lambda$ then both  $\vv{\rho} \in \Dom(F_\lambda)$ and $\rev_\lambda(\vv{\rho}) \in \Dom(F_\lambda)$. Also it is a highest weight of a tensor product of powers of determinant representations of factors of $F_\lambda$.

\begin{defn} Let $T=(T_1, T_2, \ldots)$ be a tabloid of shape $\lambda$ and suppose that $\ol{i} \in T_t$ for some $1\leq i \leq n$ and $1\leq t \leq l(\lambda)$. We define $\vv{\delta}(T,i)=(\delta_1, \delta_2, \ldots, \delta_{l(\lambda)})$ and $\vv{\iota}(T, i)=(\iota_1, \iota_2, \ldots, \iota_{l(\lambda)})$ as follows. Here $[-]$ is the Iverson bracket.
\begin{enumerate}[label=$\bullet$]
\item If $\lambda_t=\lambda_{t-1}$, then we put $\vv{\delta}(T,i)= \vv{0}$. Otherwise, $ \vv{\delta}(T,i) =  ([\lambda_j=\lambda_t])_{1\leq j \leq l(\lambda)}.$
\item $\vv{\iota}(T,i) = ([j=t])_{1\leq j \leq l(\lambda)}=([\ol{i} \in T_j])_{1\leq j \leq l(\lambda)}$.
\end{enumerate}
\end{defn}


\subsection{Multiplication by $\shift$} Here we discuss how to relate $\Phi(\shift w), \Phi(w \shift^{-1})$ and $\Phi(w)$. Let us start with the following lemma.

\begin{lem}\label{lem:shiftlch} Let $T=(T_1, T_2, \ldots)$ be a tabloid of shape $\lambda$ and suppose that $\ol{n}$ is contained in $T_t$.
\begin{enumerate}[label=\textup{(\arabic*)}]
\item $\lch_j(\shift(T))=\lch_j(T)$ if $j\neq t, t-1$ and $\lambda_j = \lambda_{j+1}$.
\item If $\lambda_t=\lambda_{t+1}$, then $\lch_t(\shift(T))= \lch_t(T)-1$. 
\item If $\lambda_{t-1}=\lambda_t$, then $\lch_{t-1}(\shift(T))= \lch_{t-1}(T)+1$.
\end{enumerate}
\end{lem}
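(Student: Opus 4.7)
The plan is to unwind the definition of $\shift(T)$ as a tabloid and run a direct case analysis. The shift $\shift$ cyclically relabels residues $\ol{i} \mapsto \ol{i+1}$ (with $\ol{n+1}=\ol{1}$) and re-sorts each row; since $\ol{n}$ is the largest residue it always occupies the rightmost position in its row, so after applying $\shift$ the new residue $\ol{1}$ moves to the leftmost position of that same row $t$ while every other entry is translated by $+1$ without wrapping. In particular, for two consecutive rows neither of which contains $\ol{n}$, $\shift$ acts as a uniform order-preserving translation on all of their entries, which yields part (1) immediately.

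For part (2), I would write $T_t = (b_1, \ldots, b_k)$ with $b_k = \ol{n}$ and $T_{t+1} = (c_1, \ldots, c_k)$ (equal lengths since $\lambda_t = \lambda_{t+1}$), so that $(\shift T)_t = (\ol{1}, b_1+1, \ldots, b_{k-1}+1)$ and $(\shift T)_{t+1} = (c_1+1, \ldots, c_k+1)$. First note that $\lch_t(T) \geq 1$, because $d = 0$ would force $b_k = \ol{n} < c_k$. Setting $d^* = \lch_t(T)$, I would verify that $d^* - 1$ works for $\shift(T)$: the new leftmost comparison $\ol{1} < c_{d^*}+1$ is trivially true, and the remaining comparisons at positions $l \in [d^*+1,k]$ reduce via $b_j + 1 < c_l + 1 \Leftrightarrow b_j < c_l$ to the defining inequalities of $d^*$ for $T$. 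Conversely, any smaller value working for $\shift(T)$ would translate back to some $d < d^*$ working for $T$, contradicting minimality. Part (3) is analogous: with $T_{t-1} = (a_1, \ldots, a_k)$ and $T_t = (b_1, \ldots, b_{k-1}, \ol{n})$, the value $d^* + 1$ works for $\shift(T)$ because, after the index substitution $l' = l - 1$, its inequalities become exactly the defining ones for $d^* = \lch_{t-1}(T)$ on $T$; and $d^*$ itself fails either because the leftmost comparison would require $a_1 + 1 < \ol{1}$ (when $d^* = 0$) or because, when $d^* \geq 1$, the hypothetical inequalities for $\shift(T)$ translate back to the statement that $d^* - 1$ works for $T$ on positions $l \in [d^*, k-1]$, to which the remaining $l = k$ comparison $a_{k-d^*+1} < \ol{n}$ is automatic, again contradicting minimality.

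The main obstacle will be the careful index bookkeeping in parts (2) and (3): because inserting $\ol{1}$ at the leftmost slot of row $t$ offsets the positional indexing of that row by one relative to $T_t$, comparing the defining conditions for $\lch_j(\shift T)$ with those for $\lch_j(T)$ requires a substitution $l \leftrightarrow l \pm 1$ together with a separate treatment of the single boundary comparison involving $\ol{1}$; once this is set up, every other inequality reduces to its original form by cancelling the $+1$ shift on both sides.
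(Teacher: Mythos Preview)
Your proof is correct. You work directly with the explicit definition of $\lch_j$ given in the paper (the minimal rightward shift making the two rows into a standard skew tableau), and carefully track how inserting $\ol{1}$ at the front of row $t$ offsets the positional indices. Your index bookkeeping and boundary-case analysis are clean and check out.

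The paper's proof takes a different route: rather than manipulating the inequalities defining $\lch_j$, it appeals to the \emph{charge matching} and \emph{activation ordering} formalism of \cite{clp17}. Specifically, for part (2) it chooses an activation ordering on $T_t$ in which $\ol{n}$ is declared smallest, observes that $\shift$ then preserves the charge matching between $(T_t,T_{t+1})$ and $(\shift(T_t),\shift(T_{t+1}))$, and concludes by noting that $\ol{n}\in T_t$ contributes to the charge whereas $\shift(\ol{n})=\ol{1}\in\shift(T_t)$ does not. Your argument is more elementary and self-contained---it uses only the definition of $\lch$ already stated in the paper and avoids importing the matching machinery from \cite{clp17}---while the paper's argument is shorter once that machinery is available and perhaps makes the $\pm 1$ discrepancy more transparent (a single matched pair changes its charge contribution). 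Either approach is perfectly adequate here.
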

\begin{proof} The first part is straightforward. For (2), we choose the ordering on $\shift(T_t)$ to be the standard activation ordering as in \cite[Definition 5.1]{clp17} and that on $T_t$ to be the same one except that $\ol{n}$ is set to be the smallest entry. Then $\shift : T_t \rightarrow \shift(T_t)$ is order-preserving and $\shift$ also preserves the charge matching of ($T_t$, $T_{t+1}$), i.e. if $\ol{a} \in T_t$ is matched to $\ol{b} \in T_{t+1}$, then $\shift(\ol{a}) = \ol{a+1} \in \shift(T_t)$ is matched to $\shift(\ol{b}) = \ol{b+1} \in \shift(T_{t+1})$. (See \cite[Definition 5.2]{clp17} for the definition of the charge matching.) However, $\ol{n} \in T_t$ contributes to the charge whereas $\shift(\ol{n}) = \ol{1} \in \shift(T_t)$ does not, from which (2) follows. (3) is also similarly proved.
\end{proof}

\begin{lem} \label{lem:shiftoffset}
For tabloids $P, Q$ of the same shape $\lambda$, we have 
$$\offset_{P, \shift(Q)}=\offset_{P, Q}-\vv{\iota}(Q, n)+\vv{\delta}(Q, n) \quad \textup{ and } \quad \offset_{\shift(P),Q} = \offset_{P, Q} +\vv{\iota}(P, n) - \vv{\delta}(P,n).$$
\end{lem}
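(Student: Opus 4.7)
The plan is to establish the first identity directly from the recursive definition of $\offset_{P,Q}$ combined with Lemma \ref{lem:shiftlch}, and then deduce the second identity by exploiting the antisymmetry $\offset_{A,B} + \offset_{B,A} = 0$. Concretely, applying the first identity with the roles of $P$ and $Q$ swapped gives
\[
\offset_{\shift(P), Q} = -\offset_{Q, \shift(P)} = -\bigl(\offset_{Q,P} - \vv{\iota}(P,n) + \vv{\delta}(P,n)\bigr) = \offset_{P,Q} + \vv{\iota}(P,n) - \vv{\delta}(P,n),
\]
so only the first identity genuinely requires work.

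For the first identity, I would set $\Delta \colonequals \offset_{P, \shift(Q)} - \offset_{P,Q}$ and unfold the recursion coordinate by coordinate. The recursion for the offset tells us that $\Delta_i = 0$ whenever $i = 1$ or $\lambda_{i-1} > \lambda_i$ (both terms reset independently to zero), while $\Delta_i = \Delta_{i-1} + \bigl(\lch_{i-1}(Q) - \lch_{i-1}(\shift(Q))\bigr)$ whenever $\lambda_{i-1} = \lambda_i$. Thus $\Delta$ is determined, block by block of equal parts of $\lambda$, by accumulating the local-charge differences supplied by Lemma \ref{lem:shiftlch}: the difference vanishes except possibly at $i-1 = t$ (contributing $+1$ when $\lambda_t = \lambda_{t+1}$) and at $i-1 = t-1$ (contributing $-1$ when $\lambda_{t-1} = \lambda_t$), where $t$ denotes the row containing $\ol{n}$ in $Q$.

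The remaining step is a short case analysis according to where $t$ sits inside its block $[a,b]$ of rows of length $\lambda_t$. When the block is a singleton, no accumulation occurs anywhere and $\Delta = 0$; this matches $-\vv{\iota}(Q,n) + \vv{\delta}(Q,n) = -e_t + e_t = 0$, because $\lambda_{t-1} > \lambda_t$ triggers the nonzero form of $\vv{\delta}$. When $t = a < b$ (top of a nontrivial block), only the $+1$ contribution at $i = a+1$ fires and propagates through the rest of the block, giving $\Delta = \sum_{j=a+1}^{b} e_j$, which agrees with $-\vv{\iota} + \vv{\delta} = -e_a + \sum_{j=a}^b e_j$. When $a < t < b$, the $-1$ at $i = t$ and the $+1$ at $i = t+1$ cancel except for a single $-e_t$ term, matching $\vv{\delta} = \vec{0}$ and $\vv{\iota} = e_t$. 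Finally, when $t = b > a$ only the $-1$ at $i = b$ fires, giving $\Delta = -e_b$, as required. Across all four cases $\Delta = -\vv{\iota}(Q,n) + \vv{\delta}(Q,n)$, proving the lemma.

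The only genuinely delicate point is keeping the bookkeeping straight at the boundaries of blocks, where the offset recursion resets and the two corner cases (top and bottom of a block) interact with the definition of $\vv{\delta}$, which itself switches between $\vec{0}$ and an indicator of the block. I expect this boundary accounting to be the main (though routine) obstacle; once it is handled the identity drops out of the telescoping.
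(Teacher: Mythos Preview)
Your proof is correct and follows essentially the same approach as the paper: the paper's proof is the one-liner ``It follows from Lemma~\ref{lem:shiftlch} and \cite[Theorem 5.10]{clp17},'' and your argument is precisely the explicit unfolding of that claim using the recursive definition of $\offset_{P,Q}$ given in Section~2.2 (which the paper notes is equivalent to \cite[Theorem 5.10]{clp17}). The antisymmetry reduction for the second identity and the block-by-block case analysis are exactly what one has to do to cash out the paper's citation, and your four cases match the target vectors correctly.
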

\begin{proof} It follows from Lemma \ref{lem:shiftlch} and \cite[Theorem 5.10]{clp17}.
\end{proof}

\begin{prop} \label{prop:shift}
Suppose that $\Phi(w) = (P, Q, \offset_{P,Q}+\vv{\rho})$. Then we have
\begin{align*}
\Phi(w\shift^{-1}) &= (P, \shift(Q), \offset_{P,\shift(Q)}+\vv{\rho}-\vv{\delta}(Q,n))
\\\Phi(\shift w) &= (\shift(P), Q, \offset_{\shift(P),Q}+\vv{\rho}+\vv{\delta}(P,n))
\end{align*}
In particular, if  $\Phi(w) = (P, P, \vv{\rho})$ then $\Phi(\shift w \shift^{-1}) = (\shift(P), \shift(P), \vv{\rho})$.
\end{prop}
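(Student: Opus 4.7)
The plan hinges on the observation that left multiplication by $\shift$ and right multiplication by $\shift^{-1}$ act on $w$ by uniform translations of its ball set $\cB_w$, and that the forward AMBC algorithm is equivariant under such translations. Since $(\shift w)(i) = w(i)+1$, the ball set $\cB_{\shift w}$ is the $(0,1)$-translate of $\cB_w$; since $(w\shift^{-1})(i) = w(i-1)$, the ball set $\cB_{w\shift^{-1}}$ is the $(1,0)$-translate. All geometric ingredients of the forward algorithm --- streams, channels, proper numberings, the channel numbering $d^C_w$, zigzags, $\fw$, and $\st$ --- depend only on the SW and NW orderings, and these orderings are preserved by any uniform translation of $\bZ^2$. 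The one point that requires brief verification is that the \emph{southwest} channel of the translated partial permutation is the translate of the southwest channel of $w$; this is immediate from the fact that translation is a bijection on the set of channels and preserves $\leq_{SW}$. Iterating the forward step then gives $S_i(\shift w) = S_i(w) + (0,1)$ and $S_i(w\shift^{-1}) = S_i(w) + (1,0)$ for every $i$.

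Translating a stream $S$ by $(0,1)$ leaves its domain residue set unchanged and shifts its range residue set by $+1\bmod n$ (with $\ol n$ cycling to $\ol 1$), while the $(1,0)$-translation does the symmetric thing. Reading off the AMBC tabloids in the convention in which $P$ records the range residues of the $S_i$ row by row and $Q$ records the domain residues, one immediately obtains $P(\shift w) = \shift(P)$ with $Q(\shift w) = Q$, and $P(w\shift^{-1}) = P$ with $Q(w\shift^{-1}) = \shift(Q)$. For the altitudes I would compute directly from $a(S) = \sum_{x \in A}(\ceil{S(x)/n}-1)$ (where $A$ and $B$ denote the domain and range residue sets of $S$) that a $(0,1)$-translation changes $a(S)$ by $+[\ol n \in B]$ and a $(1,0)$-translation changes it by $-[\ol n \in A]$; both identities come down to the effect of adding $1$ inside the ceiling function exactly once, when the value crosses a multiple of $n$. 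Summing over $i$, the raw altitude-vector changes are $+\vv{\iota}(P,n)$ and $-\vv{\iota}(Q,n)$ respectively.

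Finally, Lemma~\ref{lem:shiftoffset} --- which gives $\offset_{\shift(P),Q} - \offset_{P,Q} = \vv{\iota}(P,n) - \vv{\delta}(P,n)$ and $\offset_{P,\shift(Q)} - \offset_{P,Q} = -\vv{\iota}(Q,n) + \vv{\delta}(Q,n)$ --- allows me to reshape these raw altitude vectors into the forms asserted in the proposition, with the new reference offsets $\offset_{\shift(P),Q}$ and $\offset_{P,\shift(Q)}$ and residual corrections $+\vv{\delta}(P,n)$ and $-\vv{\delta}(Q,n)$. The ``in particular'' statement then drops out by composition: starting from $\Phi(w) = (P,P,\vv{\rho})$, the first formula yields $\Phi(\shift w) = (\shift(P),P,\offset_{\shift(P),P} + \vv{\rho} + \vv{\delta}(P,n))$, and applying the second formula to $\shift w$ --- whose $P$- and $Q$-tabloids are $\shift(P)$ and $P$ --- the two $\vv{\delta}(P,n)$ terms cancel and $\offset_{\shift(P),\shift(P)}=0$, leaving $\Phi(\shift w\shift^{-1}) = (\shift(P),\shift(P),\vv{\rho})$. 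The single nontrivial step in the full write-up is the translation-equivariance of the southwest channel; all remaining assertions are direct unwinding of the definitions.
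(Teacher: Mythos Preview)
Your proof is correct and follows essentially the same approach as the paper's: both rely on the translation-equivariance of the forward AMBC to obtain the raw altitude shifts $\pm\vv{\iota}(\,\cdot\,,n)$, and then invoke Lemma~\ref{lem:shiftoffset} to rewrite these in terms of the new offsets and the $\vv{\delta}$ corrections. Your write-up is in fact more explicit than the paper's---you spell out the ball-set translations, the equivariance of the southwest channel, and the ceiling calculation for the altitude change, whereas the paper compresses all of this into the single sentence that the AMBC processes for $w$ and $\shift w$ ``are identical up to the $\shift$ of residues modulo $n$'' with exactly one block diagonal increasing at step $t$ with $\bar n \in P_t$.
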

\begin{proof} 
By Lemma \ref{lem:shiftoffset} it is enough to argue the following:
\begin{align*}
\Phi(w\shift^{-1}) &= (P, \shift(Q), \offset_{P,Q}+\vv{\rho}-\vv{\iota}(Q,n)),
\\\Phi(\shift w) &= (\shift(P), Q, \offset_{P,Q}+\vv{\rho}+\vv{\iota}(P,n)).
\end{align*}
This follows from the definition of the altitude of a stream. Indeed, if we compare the process of applying the AMBC to 
$w$ and $\shift w$, they are identical up to the $\shift$ of residues modulo $n$, except there is a unique forward step of the AMBC when the block diagonal $D(b)$ of exactly one ball $b$ increases by $1$ in the shifted version. It is exactly the step $t$ such that $\bar n \in P_t$. Similar consideration works for $w\shift^{-1}$.
\end{proof}

We finish with an example. 

\begin{example} \label{ex:shift}
Let $n=9$ and $w = [-1,3,10,-5,14,-3,18,7,2]$. Then $\Phi(w) = (P, Q, \offset_{P,Q}+\vv{\rho})$, where $P, Q, \offset_{P,Q}$ are as in the Example \ref{ex:1},
and $\vv{\rho} = (0,1,2)$. Then $\shift w = [0,4,11,-4,15,-2,19,8,3]$ and $\Phi(\shift w) = (\shift(P), Q, \offset_{P,Q}+\vv{\rho'})$, where $\vv{\rho'} = (0,1,3) = \vv{\rho} + (0,0,1) = \vv{\rho} + \iota(P,9)$, and  $\shift(P) = \tableau[sY]{\ol{3}, \ol{5}, \ol{7} \\ \ol{4}, \ol{8}, \ol{9} \\ \ol{1}, \ol{2}, \ol{6}}$. One checks that $\offset_{\shift(P),Q} = (0,-2,0)$, in agreement with Proposition \ref{prop:shift}.
\end{example}

\subsection{Knuth moves and star operations}
Until the end of this section we assume $n\geq 3$.

For $w\in \extSn$ and $i \in \bZ$ such that either $w(i-1)$ or $w(i+2)$ is between $w(i)$ and $w(i+1)$, we define the right star operation $w \mapsto w^*$ for $* \sim \ol{i}$ where $w^*$ is obtained from $w$ by exchanging $w(i+kn)$ and $w(i+1+kn)$ for each $k \in \bZ$. We similarly define the left star operation $w \mapsto {}^*w$ for $*\sim \ol{i}$ to be $w \mapsto ((w^{-1})^{*})^{-1}$ if $w^{-1}$ satisfies the above condition. We also call them Knuth moves as it is an affine analogue of the usual Knuth moves for $\Sym_n$.


\begin{defn} Let $n\geq 3$. For a tabloid $T$ and $1\leq i\leq n$, suppose that the rows of $T$ containing $\ol{i}$ and $\ol{i+1}$ are different. Let $S$ be the tabloid obtained from $T$ by exchanging $\ol{i}$ and $\ol{i+1}$. If they satisfy either
\begin{gather*}
\{ \tau(T) \cap \{\ol{i}, \ol{i+1}\}, \tau(S) \cap \{\ol{i}, \ol{i+1}\} \} =\{\{\ol{i}\}, \{\ol{i+1}\}\} \textup{ or}
\\\{ \tau(T) \cap \{\ol{i-1}, \ol{i}\}, \tau(S) \cap \{\ol{i-1}, \ol{i}\} \} =\{\{\ol{i-1}\}, \{\ol{i}\}\},
\end{gather*}
then we say that $T^*$ is well-defined for $*\sim \ol{i}$, and define $T^*\colonequals S$.
\end{defn}
Note that $T^*$ is well-defined then $(T^*)^*$ is also well-defined and $(T^*)^*=T$. Moreover, by \cite[Proposition 3.6]{clp17}, we have $L(w) = \tau(P(w))$ and $R(w) = \tau(Q(w))$. This implies that for $i \in \bZ$, $Q(w)^*$ (resp. $P(w)^*$) is well-defined for $*\sim \ol{i}$ if and only if $w^*$ (resp. ${}^*w$) is well-defined for $*\sim \ol{i}$.
Now \cite[Theorem 3.11]{clp17} provides the following description of how  star operations changes the result of AMBC.
Suppose that $\Phi(w) = (P, Q, \vv{\rho})$ for some $w\in \extSn$. If $*\not\sim \ol{n}$, then we have
$$\Phi(w^*) = (P, Q^*, \vv{\rho}) \textup{ and } \Phi({}^*w) = (P^*, Q, \vv{\rho})$$
whenever $w^*, {}^*w$ are well-defined. If $*\sim \ol{n}$, then we have
$$\Phi(w^*) = (P, Q^*, \vv{\rho}+\vv{\iota}(Q,1)-\vv{\iota}(Q,n)) \textup{ and } \Phi({}^*w) = (P^*, Q,\vv{\rho}-\vv{\iota}(P,1)+\vv{\iota}(P,n))$$
whenever $w^*, {}^*w$ are well-defined.

\ytableausetup{smalltableaux, aligntableaux=center}
Now we discuss how the star operation affects symmetrized offset constants.
\begin{lem} \label{lem:knuthlch} For $n \geq 3$, let $T$ be a tabloid of shape $\lambda \vdash n$ such that $T^*$ is well-defined for $*\sim \ol{i}$. If $\ol{i} \neq \ol{n}$, then  $\lch_j(T) = \lch_j(T^*)$ for any $j$. Now suppose that $\ol{i} = \ol{n}$, $\ol{1} \in T_s$, and $\ol{n} \in T_t$.
\begin{enumerate}[label*=\textup{(\alph*)}]
\item If $|t-s|\geq 2$, then $\lch_j(T^*)=\lch_j(T)$ if $j \not\in \{s-1,s, t-1,t\}$ and
\begin{enumerate}[label=$\bullet$]
\item if $\lambda_s=\lambda_{s+1}$, then $\lch_s(T^*)=\lch_s(T)+1$. 
\item if $\lambda_{s-1}=\lambda_s$, then $\lch_{s-1}(T^*)=\lch_{s-1}(T)-1$.
\item if $\lambda_t=\lambda_{t+1}$, then $\lch_t(T^*)=\lch_t(T)-1$. 
\item if $\lambda_{t-1}=\lambda_t$, then $\lch_{t-1}(T^*)=\lch_{t-1}(T)+1$. 
\end{enumerate}
\item If $t=s+1$, then either $\ol{2} \in T_{t}$ or $\ol{n-1} \in T_{s}$. In this case $\lch_j(T^*)=\lch_j(T)$ if $j \not\in \{s-1, s=t-1, t\}$ and
\begin{enumerate}[label=$\bullet$]
\item if $\lambda_{s-1}=\lambda_{s}$, then $\lch_{s-1}(T^*)=\lch_{s-1}(T)-1$.
\item if $\lambda_s=\lambda_{s+1}(=\lambda_t)$, then $\lch_s(T^*)=\lch_s(T)+2$.
\item if $\lambda_t=\lambda_{t+1}$, then $\lch_t(T^*)=\lch_t(T)-1$.
\end{enumerate}
\item If $s=t+1$, then either $\ol{2} \in T_{s}$ or $\ol{n-1} \in T_{t}$. In this case $\lch_j(T^*)=\lch_j(T)$ if $j \not\in \{t-1, t=s-1, s\}$ and
\begin{enumerate}[label=$\bullet$]
\item If $\lambda_{t-1}=\lambda_t$, then $\lch_{t-1}(T^*)=\lch_{t-1}(T)+1$.
\item If $\lambda_t=\lambda_{t+1}(=\lambda_s)$, then $\lch_{s-1}(T^*)=\lch_{s-1}(T)-2$.
\item If $\lambda_{s}=\lambda_{s+1}$, then $\lch_{s}(T^*)=\lch_{s}(T)+1$.
\end{enumerate}
\end{enumerate}
\end{lem}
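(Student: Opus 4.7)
The plan is to reason directly from the explicit formula defining $\lch_j(T)$: writing $T_j = (a_1, \ldots, a_s)$ and $T_{j+1} = (b_1, \ldots, b_t)$ in row-standard order, $\lch_j(T)$ is the least $d \in \bN$ satisfying $a_{l-d} < b_l$ for all $l \in [d+1, t]$. Since the star operation only perturbs the two rows containing $\ol{i}$ and $\ol{i+1}$, any change in $\lch_j(T)$ must involve a row pair containing at least one of these residues. I would therefore organize the argument as a case analysis on the locations of the swapped residues, using the well-definedness hypothesis of $T^*$ to constrain the surrounding residues $\ol{i-1}, \ol{i+2}$ in Case 1 and $\ol{2}, \ol{n-1}$ in Case 2.

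For Case 1, where $\ol{i} \neq \ol{n}$, the decisive observation is that within a row-standard row, removing $\ol{i}$ and inserting $\ol{i+1}$ leaves the new entry at the same positional index as the old one (since $\ol{i+1}$ is the residue immediately following $\ol{i}$ and is not already present), and conversely. Thus the modified rows differ from the originals in exactly one coordinate, with $\ol{i}$ and $\ol{i+1}$ swapped in place. The well-definedness condition forces $\ol{i-1}$ or $\ol{i+2}$ to straddle the rows containing $\ol{i}$ and $\ol{i+1}$, which pins down the rank of each swapped residue relative to the entries it is compared against in the formula $a_{l-d} < b_l$. I would then verify by direct inspection, splitting on which row pair $(T_j, T_{j+1})$ contains $\ol{i}, \ol{i+1}$, that the minimal admissible $d$ is unchanged under the swap.

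For Case 2, where $\ol{i} = \ol{n}$, the swap exchanges $\ol{1}$ and $\ol{n}$. Since $\ol{1}$ is the smallest and $\ol{n}$ the largest residue, removing $\ol{1} = a_1$ from $T_s$ and inserting $\ol{n}$ yields $T^*_s = (a_2, \ldots, a_{\lambda_s}, \ol{n})$: the remaining entries are re-indexed down by one and $\ol{n}$ becomes the new last entry, and a symmetric re-indexing happens in $T_t$. The effect on each affected $\lch_j$ can be read off by substituting these re-indexed row vectors into the defining inequalities and tracking the extremal $d$, in the spirit of the argument for Lemma \ref{lem:shiftlch}. In sub-case (a) with $|t-s| \geq 2$, the two substitutions are independent, and the four adjacent row pairs $(s-1,s), (s,s+1), (t-1,t), (t,t+1)$ absorb them exactly as listed. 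In sub-cases (b) and (c), rows $s$ and $t$ are adjacent, and the single row pair between them absorbs both substitutions simultaneously, giving the $\pm 2$ shifts.

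The most delicate step will be the bookkeeping in sub-cases (b) and (c) of Case 2. The comparisons defining $\lch$ of the shared row pair critically involve $a_1 = \ol{1}$ on one side, which trivially satisfies every inequality, and on the other side the entry $\ol{n}$, which is never smaller than anything; after the swap these roles flip, so two extremal inequalities go from automatic to impossible at once. The auxiliary hypothesis $\ol{2} \in T_t$ or $\ol{n-1} \in T_s$, which a short computation with the $\tau$-invariant shows is exactly what the well-definedness of $T^*$ imposes in the adjacent-row configuration, is what guarantees that the minimal $d$ jumps by precisely $2$ rather than by some larger amount.
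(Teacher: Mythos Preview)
Your plan is sound and would lead to a correct proof, but it takes a genuinely different route from the paper. The paper does not work with the minimal-shift formula for $\lch_j$ directly; instead it invokes the \emph{charge matching} and \emph{activation ordering} machinery of \cite[Section~5]{clp17}. For each affected row pair the paper chooses a nonstandard activation ordering (declaring $\ol{n}$ or $\ol{i+1}$ to be smallest when convenient), checks that the star operation is matching-preserving under these orderings, and then counts how many matched pairs change their contribution to the charge. Your approach is more elementary---it needs nothing beyond the defining inequalities $a_{l-d}<b_l$---at the price of a longer index-chase; the paper's approach is shorter once one grants the charge-matching formalism but is less self-contained.

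One small correction to your sketch of sub-case~(b). After the swap the re-indexing $a'_j=a_{j+1}$ in row $s$ and $b'_l=b_{l-1}$ in row $s+1$ already gives, for every $d'\ge 2$, the equivalence $\lch_s(T^*)\le d' \iff \lch_s(T)\le d'-2$ (the two ``missing'' constraints $a_1<b_{d'-1}$ and $a_{m-d'+2}<b_m$ are automatic since $a_1=\ol{1}$ and $b_m=\ol{n}$). So the upper bound $\lch_s(T^*)\le \lch_s(T)+2$ needs no extra hypothesis. The well-definedness condition $\ol{2}\in T_t$ or $\ol{n-1}\in T_s$ is used for the \emph{lower} bound: it rules out $\lch_s(T^*)=1$ by forcing either $a_2\ge b_1$ (when $b_1=\ol{2}$) or $a_m\ge b_{m-1}$ (when $a_m=\ol{n-1}$). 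Thus the hypothesis guarantees the jump is \emph{at least} $2$, not at most $2$ as you wrote.
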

\begin{proof} Let us first assume that $*\sim \ol{i} \neq \ol{n}$ and suppose that $\ol{i} \in T_t$ and $\ol{i+1}\in T_s$. Then $s\neq t$, and if $|s-t|\geq 2$ then it is clear that $\lch_j(T^*)=\lch_j(T)$ for any $j$. Thus it remains to check the case when $|s-t|=1$, i.e. $t=s+1$ or $s=t+1$. But since $T^{**}=T$, by symmetry it suffices to show the case when $t=s+1$ which we assume from now on.

It is still clear that $\lch_j(T^*)=\lch_j(T)$ if $j \neq s$. Thus we only need to show that $\lch_s(T^*)=\lch_s(T)$ when $\lambda_s=\lambda_{s+1}$. Since in this case we have $\ol{i} \in \tau(T^*)-\tau(T)$, for the star operation to be well-defined we should have either $\ol{i+2} \in T_{t}=T_{s+1}$ so that $\ol{i+1} \in \tau(T)-\tau(T^*)$ or $\ol{i-1} \in T_{s}=T_{t-1}$ so that $\ol{i-1} \in \tau(T)-\tau(T^*)$.

Here we only deal with the case when $\ol{i+2} \in T_{t}=T_{s+1}$, but the other case can also be similarly proved. We take activation orderings on $T_s$ and $T^*_{s}$ which are the same as the standard activation ordering except that $\ol{i+1}\in T_s$ and $\ol{i} \in T^*_{s}$ are the smallest element in each row. Then $\ol{i+1} \in T_s$ is matched to $\ol{i+2} \in T_{s+1}$ and $\ol{i} \in T_s^*$ is matched to $\ol{i+1} \in T_{s+1}^*$. Furthermore, the matchings between $T_s$ and $T_{s+1}$ are the same as those between $T_s^*$ and $T_{s+1}^*$ except one; there exists $\ol{j}$ such that $\ol{j} \in T_{s}$ is matched to $\ol{i} \in T_{s+1}$ and $\ol{j} \in T_{s}^*$ is matched to $\ol{i+2} \in T_{s+1}^*$. However, this does not affect the local charge since $\ol{j} \neq \ol{i+1}$, i.e. either $j < i$ or $i+2<j$. Thus we have $\lch_{s}(T^*)=\lch_{s}(T)$.

It remains to consider the case when $*\sim \ol{n}$. We assume that $\ol{1} \in T_s$ and $\ol{n} \in T_t$, and first suppose that $|t-s|\geq 2$. It is still clear that $\lch_j(T^*)=\lch_j(T)$ if $j \not\in \{s-1, s, t-1, t\}$. Now if $\lambda_s=\lambda_{s+1}$, then we choose activation orderings on $T_{s}$ and $T^*_{s}$ which are the same as the standard activation ordering except that $\ol{n} \in T^*_{s}$ becomes the smallest element. Then it is clear that the star operation preserves matchings between $(T_s, T_{s+1})$ and $(T_s^*, T_{s+1}^*)$, and $\ol{1}\in T_s$ does not contribute to the charge but $\ol{n} \in T_{s}^*$ always does. Thus it follows that $\lch_s(T^*)=\lch_s(T)+1$.

On the other hand, if $\lambda_{s-1}=\lambda_s$ then we take the standard activation ordering on both $T_{s-1}$ and $T^*_{s-1}$. Then it is also clear that the star operation is matching-preserving, but the element $\ol{k} \in T_{s-1}$ which is matched to $\ol{1}$ contributes to the charge whereas $\ol{k} \in T_{s-1}^*$ which is necessarily matched to $\ol{n}$ does not. In other words, we have $\lch_{s-1}(T^*)=\lch_{s-1}(T)-1$. Now the statements about the local charges on $t$-th and $(t-1)$-th row follow from applying this argument to $T^*$.

If $|t-s|=1$, then it suffices to consider the case when $t=s+1$ because the other case is proved by switching $T$ and $T^*$. Now if $t=s+1$, i.e. if $\ol{1} \in T_s$ and $\ol{n} \in T_{s+1}$, then clearly $\lch_j(T^*)=\lch_j(T)$ when $j \not \in \{s-1, s, s+1\}$. First suppose that $\lambda_{s-1} =\lambda_s$. If we pose the standard activation ordering on both $T_{s-1}$ and $T_{s-1}^*$, then the star operation is matching-preserving. Also, there exists $2\leq k \leq n-1$ such that $\ol{k}\in T_{s-1}$ is matched to $\ol{1} \in T_s$ but $\ol{k}\in T_{s-1}^*$ is matched to $\ol{n} \in T_{s}^*$. Thus it is clear that $\lch_{s-1}(T^*) = \lch_{s-1}(T)-1$. Now when $\lambda_{s+1}=\lambda_{s+2}$, then similar argument applies and one can show that $\lch_{s+1}(T^*) = \lch_{s+1}(T)-1$.

This time we assume that $\lambda_s=\lambda_{s+1}$ and prove that $\lch_{s}(T^*) = \lch_{s}(T)+2$. Note that either $\ol{2} \in T_{s+1}$ or $\ol{n-1} \in T_s$ for the star operation to be well-defined for $* \sim \ol{n}$. Here we only discuss the case when $\ol{2} \in T_{s+1}$, but the other case is proved similarly. We choose the activation ordering of $T_s$ and $T_s^*$ to be the standard one except that $\ol{n} \in T_s^*$ becomes the smallest element. Then $\ol{1}\in T_s$ is matched to $\ol{2} \in T_{s+1}$, whereas $\ol{n} \in T_{s}^*$ is matched to $\ol{1} \in T_{s+1}^*$. Besides these matchings, there is no difference between $T_s$ and $T_s^*$, whereas $T_{s+1}^*$ is obtained from $T_{s+1}$ by replacing $\ol{n}$ with $\ol{2}$. This causes the local charge of $T^*$ at row $s$ to be bigger by 1 than that of $T$. Together with considering the matching $(\ol{1}, \ol{2})$ in $T$ and $(\ol{n}, \ol{1})$ in $T^*$, we see that $\lch_{s}(T^*) = \lch_{s}(T)+2$ as desired.
\end{proof}

\begin{lem} \label{lem:knuthoffset} For $n \geq 3$, let $T, P, Q$ be tabloids of shape $\lambda \vdash n$ such that $T^*$ is well-defined for $*\sim \ol{i}$. If $\ol{i} \neq \ol{n}$, then we have $\offset_{P,T}=\offset_{P,T^*}$ and $\offset_{T,Q}=\offset_{T^*,Q}$. If $\ol{i}=\ol{n}$, then we have
\begin{align*}
\offset_{P,T^*}&=\offset_{P,T}+\vv{\iota}(T,1)-\vv{\iota}(T,n)-\vv{\delta}(T,1)+\vv{\delta}(T,n) \textup{ and}
\\\offset_{T^*,Q}&=\offset_{T,Q}-\vv{\iota}(T,1)+\vv{\iota}(T,n)+\vv{\delta}(T,1)-\vv{\delta}(T,n).
\end{align*}
\end{lem}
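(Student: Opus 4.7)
The plan is to observe that the lemma reduces, via the cocycle relation $\offset_{A,B}+\offset_{B,C}=\offset_{A,C}$ recorded in the excerpt, to a single identity involving only $T$ and $T^*$, and then to verify that identity by a case-by-case application of Lemma~\ref{lem:knuthlch}.

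The case $\ol{i}\neq\ol{n}$ is immediate: Lemma~\ref{lem:knuthlch} gives $\lch_j(T)=\lch_j(T^*)$ for every $j$, so the defining recursion of $\offset$ yields $\offset_{P,T}=\offset_{P,T^*}$ and $\offset_{T,Q}=\offset_{T^*,Q}$. For $\ol{i}=\ol{n}$, the cocycle relation gives
\[
\offset_{P,T^*}-\offset_{P,T}=\offset_{T,T^*} \qquad\text{and}\qquad \offset_{T^*,Q}-\offset_{T,Q}=\offset_{T^*,T}=-\offset_{T,T^*},
\]
so both identities to be proved reduce to the single equation
\[
\offset_{T,T^*} \;=\; \vv{\iota}(T,1)-\vv{\iota}(T,n)-\vv{\delta}(T,1)+\vv{\delta}(T,n).
\]

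To verify this I would work one maximal $\lambda$-block $[a,b]$ at a time, i.e., a maximal interval satisfying $\lambda_a=\cdots=\lambda_b$. On such a block the entries of $\offset_{T,T^*}$ are the telescoping partial sums of $\Delta_j := \lch_j(T)-\lch_j(T^*)$ starting from $0$ at row $a$. By Lemma~\ref{lem:knuthlch} together with the maximality of blocks, $\Delta_j=0$ whenever $j\notin\{s-1,s,t-1,t\}$, where $\ol{1}\in T_s$ and $\ol{n}\in T_t$. Hence $\offset_{T,T^*}$ vanishes on every block disjoint from both the $s$-block and the $t$-block, and the right-hand side vanishes on such blocks as well because $\vv{\iota}(T,1), \vv{\iota}(T,n), \vv{\delta}(T,1), \vv{\delta}(T,n)$ are all supported in those two blocks.

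The remaining work is a case analysis on the blocks containing $s$ and $t$, organized along parts (a), (b), (c) of Lemma~\ref{lem:knuthlch} and further subdivided by whether $s$ (resp.\ $t$) is the top row of its block. In each sub-case a short telescoping calculation shows the partial sum agrees with the corresponding restriction of the right-hand side. The conceptual picture is that when $s$ lies strictly below the top of its block, $\vv{\iota}(T,1)-\vv{\delta}(T,1)$ contributes $+1$ only at row $s$, while when $s$ is the top of its block it instead contributes $-1$ uniformly to the rows strictly below it in that block; the pattern of nonzero $\Delta_j$ from Lemma~\ref{lem:knuthlch} reproduces exactly these partial sums, with the symmetric behaviour for $t$. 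The main obstacle will be bookkeeping this case analysis cleanly when the $s$-block and $t$-block coincide or are adjacent ($|t-s|\leq 1$), since then the $\pm 1$ contributions from $s$ and $t$ telescope against one another within a single block rather than contributing independently.
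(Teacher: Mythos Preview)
Your proposal is correct and follows essentially the same route as the paper: the paper's proof is the single line ``It follows from Lemma~\ref{lem:knuthlch} and \cite[Theorem 5.10]{clp17}'', and your argument is precisely the unpacking of that line, using the block-wise partial-sum description of $\offset$ together with the local-charge increments supplied by Lemma~\ref{lem:knuthlch}. Your use of the cocycle identity $\offset_{P,T^*}-\offset_{P,T}=\offset_{T,T^*}$ to reduce both claimed equalities to the single computation of $\offset_{T,T^*}$ is a clean organizational improvement over the bare citation, but not a different method.
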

\begin{proof}  It follows from Lemma \ref{lem:knuthlch} and \cite[Theorem 5.10]{clp17}.
\end{proof}
%

\begin{prop} \label{prop:star}
For $n\geq 3$, suppose that $w \in \extSn$ satisfies $\Phi(w) = (P, Q, \offset_{P,Q}+\vv{\rho})$ for some tabloids $P, Q$ of shape $\lambda \vdash n$ and $\rev_\lambda(\vv{\rho}) \in \Dom(F_\lambda)$.
\begin{enumerate}[label=\arabic*)] 
\item If $Q^*$ is well-defined for $*\sim \ol{i} \neq \ol{n}$ then $\Phi(w^*) = (P, Q^*, \offset_{P,Q^*}+\vv{\rho})$. Similarly if $P^*$ is well-defined for $*\sim \ol{i} \neq \ol{n}$ then  $\Phi({}^*w) = (P^*, Q, \offset_{P^*,Q}+\vv{\rho})$.
\item If $Q^*$ is well-defined for $*\sim \ol{n}$ then $\Phi(w^*) = (P, Q^*, \offset_{P, Q^*}+\vv{\rho}+\vv{\delta}(Q, 1)-\vv{\delta}(Q, n))$. Similarly if $P^*$ is well-defined for $*\sim \ol{n}$ then $\Phi({}^*w) = (P^*, Q, \offset_{P^*, Q}+\vv{\rho}-\vv{\delta}(P, 1)+\vv{\delta}(P, n))$.
\end{enumerate}
In particular, if $\Phi(w) = (P, P, \vv{\rho})$ for some $P$ such that $P^*$ is well-defined then $\Phi({}^*w^*) = (P^*, P^*, \vv{\rho})$. (Here ${}^*w^*=({}^*w)^*={}^*(w^*)$.)
\end{prop}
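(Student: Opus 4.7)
The plan is to derive Proposition~\ref{prop:star} as a direct bookkeeping consequence of the star-operation formula of \cite[Theorem 3.11]{clp17} (quoted immediately before the proposition), combined with Lemma~\ref{lem:knuthoffset}, which records how the symmetrized offset constant changes when one of the tabloids is starred. The input to the proposition already normalizes $\Phi(w)$ by writing its third component in the form $\offset_{P,Q}+\vv{\rho}$, so the task is simply to rewrite the third component of $\Phi(w^*)$ or $\Phi({}^*w)$ in the analogous normalized form, i.e.\ using $\offset_{P,Q^*}$ or $\offset_{P^*,Q}$ in place of $\offset_{P,Q}$.

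For part (1), when $*\sim\ol{i}\neq\ol{n}$, Theorem~3.11 of \cite{clp17} leaves the third component unchanged and Lemma~\ref{lem:knuthoffset} gives $\offset_{P,Q}=\offset_{P,Q^*}$, so the claimed formula is immediate. For part (2), when $*\sim\ol{n}$, Theorem~3.11 of \cite{clp17} introduces a correction $\vv{\iota}(Q,1)-\vv{\iota}(Q,n)$, while Lemma~\ref{lem:knuthoffset} says
\[\offset_{P,Q^*}=\offset_{P,Q}+\vv{\iota}(Q,1)-\vv{\iota}(Q,n)-\vv{\delta}(Q,1)+\vv{\delta}(Q,n).\]
Substituting the second relation into the first, the $\vv{\iota}$ contributions cancel and the third component becomes $\offset_{P,Q^*}+\vv{\rho}+\vv{\delta}(Q,1)-\vv{\delta}(Q,n)$, as stated. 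The left-star statements are entirely analogous, the sign conventions being dictated by the corresponding halves of Theorem~3.11 and Lemma~\ref{lem:knuthoffset}.

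For the concluding assertion $\Phi({}^*w^*)=(P^*,P^*,\vv{\rho})$, one applies the two parts above in succession to $w$. If $*\not\sim\ol{n}$, two uses of part (1) give the result immediately since neither step modifies the third component. If $*\sim\ol{n}$, right-starring $w$ contributes $+\vv{\delta}(P,1)-\vv{\delta}(P,n)$ via part (2); the subsequent left-star is applied to $w^*$, whose left tabloid is still $P$ (not $P^*$), so part (2) contributes exactly the opposite correction $-\vv{\delta}(P,1)+\vv{\delta}(P,n)$, and the two shifts cancel. The only real obstacle is bookkeeping: one has to remember that the vectors $\vv{\delta}(T,\cdot)$ and $\vv{\iota}(T,\cdot)$ appearing in both Theorem~3.11 and Lemma~\ref{lem:knuthoffset} always refer to the unstarred tabloid $T$, never to $T^*$. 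Once this convention is tracked carefully, the cancellations are transparent and no further calculation is needed.
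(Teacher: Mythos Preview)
Your proof is correct and follows essentially the same approach as the paper: both combine \cite[Theorem 3.11]{clp17} with Lemma~\ref{lem:knuthoffset} to convert the raw third component into the normalized form $\offset_{P,Q^*}+\vv{\rho}+\cdots$, and both invoke symmetry for the left-star case. You additionally spell out the ``In particular'' claim about ${}^*w^*$, which the paper leaves implicit; your cancellation argument there is correct, including the key observation that the left-star correction still refers to $\vv{\delta}(P,\cdot)$ rather than $\vv{\delta}(P^*,\cdot)$.
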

\begin{proof} By symmetry, it suffices only to prove the statements for $w^*$ when $Q^*$ is well-defined. If $*\sim \ol{i} \neq \ol{n}$, then by \cite[Theorem 3.11]{clp17} we have $\Phi(w^*) = (P, Q^*, \offset_{P,Q}+\vv{\rho})=(P, Q^*, \offset_{P,Q^*}+\vv{\rho})$. Now if $* \sim \ol{n}$, then again by \cite[Theorem 3.11]{clp17} we have
\begin{align*}
\Phi(w^*) &=(P, Q^*, \offset_{P,Q}+\vv{\rho}+\vv{\iota}(Q, 1)-\vv{\iota}(Q, n))
\\&=(P, Q^*, \offset_{P,Q^*}+\vv{\rho}+\vv{\delta}(Q, 1)-\vv{\delta}(Q, n))
\end{align*}
by Lemma \ref{lem:knuthoffset}.
\end{proof}

\begin{example} \label{ex:star}
Let $n=9$ and $w = [-1,3,10,-5,14,-3,18,7,2]$ as in the Example \ref{ex:shift}. Recall that $\Phi(w) = (P, Q, \offset_{P,Q}+\vv{\rho})$, where $P, Q, \offset_{P,Q}$ are as in the Example \ref{ex:1},
and $\vv{\rho} = (0,1,2)$. For $* \sim \bar 9$ we have $w^* = [-7,3,10,-5,14,-3,18,7,8]$. Then one checks that $\Phi(w^*) = (P, Q^*, (0,0,0))$. Since for $P = \tableau[sY]{\ol{2}, \ol{4}, \ol{6} \\ \ol{3}, \ol{7}, \ol{8} \\ \ol{1}, \ol{5}, \ol{9}}$ and $Q^* = \tableau[sY]{\ol{3}, \ol{5}, \ol{7} \\ \ol{2}, \ol{8}, \ol{9} \\ \ol{1}, \ol{4}, \ol{6}}$ we have $\offset_{P,Q^*} = (0,-1,-2)$, the claim of Proposition \ref{prop:star} reduces to $$(0,0,0) = (0,-1,-2) + (0,1,2) + \vv{\delta}(Q, 1)-\vv{\delta}(Q, 9).$$ The latter is true since $\vv{\delta}(Q, 1) = \vv{\delta}(Q, 9) = (0,0,0)$.
\end{example}

\section{Distinguished involutions}
Here we study distinguished involutions defined in \cite{lus87:cell}. Originally, we say that $w \in \extSn$ is a distinguished involution if its Coxeter length equals $a(w)+2 \deg P_{id, w}$, where $a(w)$ is the value of Lusztig's $a$-function and $P_{id,w}$ is the Kazhdan-Lusztig polynomial attached to $(id, w)$. However, this definition is equivalent that $\ft_w$ is the unit element in $\cJ_{\lc^{-1} \cap \lc}$ where $\lc$ is the left cell containing $w$, see \ref{sec:setup}. Recall that $\cD$ is the set of such elements in $\extSn$. The main result in this section is as follows.
\begin{thm} \label{thm:dist} Suppose that $w\in \extSn$. Then $w \in \cD$ if and only if $\Phi(w) = (T, T, 0)$ for some $T$.
\end{thm}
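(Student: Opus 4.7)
The plan is to establish both directions by combining Xi's identification from Section~\ref{sec:xibij} with propagation arguments built on Propositions~\ref{prop:shift} and~\ref{prop:star}. First I would handle the ``if'' direction by showing $\Phi^{-1}(T,T,0)\in\cD$ for every tabloid $T$, starting from the anti-canonical cell. Let $d$ denote the unique distinguished involution in $\lca_\lambda$. Because $d$ is an involution in $\lca_\lambda$, the formula $\Phi(w^{-1})=(Q,P,\offset_{Q,P}-\rev_\lambda(\vv{\rho}))$ combined with Lemma~\ref{lem:antican} forces $P(d)=Q(d)=\Ta_\lambda$. Under Xi's ring isomorphism $\cJ_{(\lca_\lambda)^{-1}\cap\lca_\lambda}\xrightarrow{\simeq}\Rep(F_\lambda)$, the element $\ft_d$ is the unit, hence corresponds to the trivial representation of $F_\lambda$ whose highest weight is the zero vector. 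By Theorem~\ref{thm:xibij}, the highest weight is $\rev_\lambda(\vv{\rho}(d))$, so $\vv{\rho}(d)=0$ and $\Phi(d)=(\Ta_\lambda,\Ta_\lambda,0)$, giving the base case.

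Next I would propagate this to an arbitrary tabloid $T$ of shape $\lambda$. The key input is that both $\Phi$-invariance and membership in $\cD$ are preserved by conjugation by $\shift$ and by the left-right star operation ${}^*(-)^*$. On the $\Phi$-side this is exactly the content of Propositions~\ref{prop:shift} and~\ref{prop:star}: if $\Phi(w)=(T,T,0)$ then $\Phi(\shift w\shift^{-1})=(\shift(T),\shift(T),0)$, and whenever $T^*$ is well-defined for $*\sim\ol{i}$ we have $\Phi({}^*w^*)=(T^*,T^*,0)$. On the cell-theoretic side, conjugation by $\shift$ is a group automorphism of $\extSn$ that intertwines the Hecke-algebra structure and therefore sends $\cD$ to $\cD$; the left-right double star operation ${}^*(-)^*$ sends involutions to involutions and, via the bijection of cells it induces together with the identification of $\ft_d$ as a primitive idempotent in $\cJ_{\Gamma\cap\Gamma^{-1}}$, likewise preserves $\cD$. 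A combinatorial verification then shows that every tabloid $T$ of shape $\lambda$ is reachable from $\Ta_\lambda$ by a finite sequence of tabloid-level star operations and cyclic shifts, by an inductive sorting of residues into the columns of $\Ta_\lambda$ (using $\shift$ to move $\ol{n}$ out of the way when a star operation at $*\sim\ol{n}$ would fail to be defined).

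For the ``only if'' direction I would invoke a cardinality argument. Every $w\in\cD$ is an involution, so $P(w)=Q(w)=:T$. In each left cell $\lc_T$ there is exactly one distinguished involution, and the set $\{\Phi^{-1}(T,T,0)\mid T\in\RSYT(\lambda)\}$ supplied by the ``if'' direction contributes one element to each such $\lc_T$. Since both subsets of $\tsc_\lambda$ have the same cardinality (the number of row-standard tabloids of shape $\lambda$), they coincide, and thus any $w\in\cD$ satisfies $\Phi(w)=(T,T,0)$. The main obstacle in this plan is the preservation of $\cD$ under the left-right star operation and the combinatorial connectivity of tabloids: the former is delicate because distinguished involutions are characterized by a length-and-Kazhdan-Lusztig-polynomial identity that is not visibly preserved by $*$, and must be handled through the idempotent picture in $\cJ$; the latter, though routine in principle, requires care because individual tabloid-star operations are only conditionally defined.
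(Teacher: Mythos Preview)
Your proposal is correct and follows essentially the same route as the paper: a base case in the anti-canonical cell, propagation via Propositions~\ref{prop:shift} and~\ref{prop:star}, connectivity of left cells under ${}^*(-)^*$ and $\shift$-conjugation, preservation of $\cD$ under those operations, and a final cardinality match (one distinguished involution and one $\Phi^{-1}(T,T,0)$ per left cell).

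Two minor differences are worth noting. First, your base case is more indirect: you invoke Theorem~\ref{thm:xibij} together with Xi's ring isomorphism to deduce $\vv\rho(d)=0$ for the distinguished involution $d\in\lca_\lambda$, whereas the paper simply observes (Lemma~\ref{lem:distSn}) that $w_0^\lambda\in\Sym_n$ is a distinguished involution and reads off $\Phi(w_0^\lambda)=(\Ta_\lambda,\Ta_\lambda,0)$ from Lemma~\ref{lem:antican}. Second, the two facts you flag as obstacles---connectivity of tabloids under star moves and $\shift$, and stability of $\cD$ under ${}^*(-)^*$ and $\shift$-conjugation---are not reproved in the paper but imported directly from Xi's monograph (Lemmas~\ref{lem:conn} and~\ref{lem:distconn}). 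Your sketches for these are plausible, but you need not carry them out: they are exactly \cite[Proposition~2.2.3]{xi02} and \cite[Proposition~1.4.6]{xi02}.
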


Its proof consists of the following three lemmas.
\begin{lem} \label{lem:conn}
Let $\lc_1, \lc_2, \lc_3, \lc_4$ be left cells of $ \extSn$ contained in the same two-sided cell. Then,
\begin{enumerate}[label=\textup{(\alph*)}]
\item $\lc_1^{-1} \cap \lc_3$ can be obtained from $\lc_2^{-1}\cap \lc_4$ by applying (left and right) star operations and (left and right) multiplication by $\shift$. 
\item $\lc_1$ can be obtained from $\lc_2$ by applying right star operations and right multiplication by $\shift$.
\item $\lc_1^{-1} \cap \lc_1$ can be obtained from $\lc_2^{-1} \cap \lc_2$ by applying the map $w\mapsto {}^*w^*$ (where the left and right star operation corresponds to the same $\ol{i}$ for some $1\leq i \leq n$) and conjugation by $\shift$.
\end{enumerate}
\end{lem}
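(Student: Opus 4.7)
The proof will be driven by the tabloid-level description of cells furnished by AMBC: left cells are indexed as $\lc_T = \{w \mid Q(w) = T\}$, right cells as $\lc_T^{-1} = \{w \mid P(w) = T\}$, and membership in the two-sided cell $\tsc_\lambda$ is equivalent to $P(w), Q(w)$ having shape $\lambda$. Propositions \ref{prop:shift} and \ref{prop:star} translate the four allowed operations into explicit moves on the pair $(P(w),Q(w))$: the right star $w \mapsto w^*$ acts as $(P,Q) \mapsto (P,Q^*)$, the left star ${}^*w$ as $(P,Q)\mapsto(P^*,Q)$, right multiplication $w\mapsto w\shift^{-1}$ as $(P,Q) \mapsto (P,\shift(Q))$, and left multiplication $w\mapsto \shift w$ as $(P,Q) \mapsto (\shift(P),Q)$ (with $\vv{\rho}$ shifted in a controlled way that does not affect the cell). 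Consequently, each of these operations restricts to a well-defined bijection between the corresponding intersections of cells.

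The technical heart of the argument is the following combinatorial claim, which I will formulate and prove separately: for every $\lambda \vdash n$, the set $\RSYT(\lambda)$ is a single orbit under the operations $T \mapsto T^*$ (whenever well-defined, for any $* \sim \bar i$) and $T \mapsto \shift(T)$. Given this claim, parts (a) and (b) follow immediately. For (b), choose a sequence of right star operations and right multiplications by $\shift^{-1}$ whose induced sequence of moves on tabloids connects $Q_2$ to $Q_1$; applying the corresponding sequence to $\lc_2$ produces $\lc_1$. For (a), apply the same argument first on the right to bring $Q_4$ to $Q_3$, then on the left to bring $P_2$ to $P_1$; left and right moves commute on the level of tabloids since they affect different components, so the resulting sequence carries $\lc_2^{-1} \cap \lc_4$ to $\lc_1^{-1} \cap \lc_3$.

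For part (c) the subtlety is that we must preserve the diagonal condition $P(w)=Q(w)$. The operations available are $w \mapsto {}^*w^*$ with matching $* \sim \bar i$ on both sides, and $w \mapsto \shift w \shift^{-1}$. By the ``in particular'' clauses of Propositions \ref{prop:shift} and \ref{prop:star}, these send $(T,T,\vv{\rho})$ to $(T^*,T^*,\vv{\rho})$ and $(\shift(T),\shift(T),\vv{\rho})$ respectively, so they do preserve the diagonal. The compatibility that must be verified is that, for $w \in \lc_T^{-1}\cap \lc_T$ with $T^*$ well-defined for $* \sim \bar i$, both ${}^*w$ and $w^*$ are simultaneously well-defined; this is immediate because $L(w) = \tau(P(w)) = \tau(T) = \tau(Q(w)) = R(w)$, so the tau-invariant conditions characterizing well-definedness coincide on the two sides. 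With this checked, the combinatorial transitivity claim applied to the moves $T \mapsto T^*$ and $T \mapsto \shift(T)$ yields (c).

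The principal obstacle is the combinatorial transitivity claim for $\RSYT(\lambda)$. I would attack it by induction on $n$: the shift operation makes the position of $\bar n$ in $T$ arbitrary (its cyclic orbit hits every row), and once $\bar n$ is placed in its target row we may remove it and apply the inductive hypothesis to the tabloid of shape $\lambda$ with one box removed, realizing the inductive moves by star operations of the form $* \sim \bar i$ with $\bar i \ne \bar n$, which by Lemma \ref{lem:knuthlch} do not disturb the placement of $\bar n$. The base cases are the shapes $(n)$ (a single tabloid) and $(1^n)$ (where $\shift$ alone already generates a transitive cyclic action and star operations generate the rest, reducing to the classical fact that right star operations act transitively on standard Young tableaux of a given shape inside $\Sym_n$).
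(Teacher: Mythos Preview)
The paper's proof is a one-line citation: it invokes \cite[Lemma 2.2.1, Corollary 2.2.2, Proposition 2.2.3]{xi02} (built on \cite{shi86}) and notes that (b) follows from (a) and (c) from (b). Your approach is genuinely different: you reduce everything, via Propositions~\ref{prop:shift} and~\ref{prop:star}, to a purely combinatorial transitivity statement on $\RSYT(\lambda)$ under $T\mapsto T^*$ and $T\mapsto\shift(T)$. That reduction is correct and is a nice, self-contained alternative to citing Xi---indeed, the tabloid transitivity you isolate \emph{is} essentially the content of the Xi lemmas, and also appears (with refinements) as \cite[Theorem~8.6]{clp17}, which the present paper invokes in the appendix.

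The gap is in your inductive proof of the transitivity claim. After using $\shift$ to place $\ol{n}$ in its target row, you propose to invoke the inductive hypothesis on the tabloid of size $n-1$ obtained by deleting $\ol{n}$, and to lift the inductive moves to star operations $*\sim\ol{i}$ with $\ol{i}\neq\ol{n}$. There are two problems. First, to keep $\ol{n}$ fixed you must also avoid $*\sim\ol{n-1}$, since that swap exchanges $\ol{n-1}$ and $\ol{n}$. Second, and more seriously, the inductive hypothesis for $\RSYT(\mu)$, $\mu\vdash n-1$, uses both star operations \emph{and} the shift $\shift_{n-1}$ on $[n-1]$; but $\shift_{n-1}$ (the cycle $\ol{1}\to\ol{2}\to\cdots\to\ol{n-1}\to\ol{1}$ fixing $\ol{n}$) is not a star operation on $[n]$, nor is it in general realizable as a composite of star operations $*\sim\ol{i}$ with $\ol{i}\notin\{\ol{n-1},\ol{n}\}$. (One computes $\shift_{n-1}=(*\sim\ol{n})\circ\shift_n$, but $*\sim\ol{n}$ is precisely the move you have excluded and is not always well-defined.) So the induction, as written, does not close. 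You would need either a stronger inductive hypothesis (e.g.\ transitivity of the \emph{finite} star operations $*\sim\ol{1},\ldots,\ol{n-1}$ alone on $\RSYT(\lambda)$, which is the classical statement for $\Sym_n$ and requires its own argument), or to cite \cite{xi02} or \cite[Theorem~8.6]{clp17} for the tabloid transitivity, at which point your proof collapses to the paper's.
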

\begin{proof} It is a reformulation of \cite[Lemma 2.2.1, Corollary 2.2.2, Proposition 2.2.3]{xi02} based on the result of \cite{shi86}. Also note that (b) follows from (a) and (c) follows from (b).
\end{proof}
\begin{lem} \label{lem:distconn} If $w\in \cD$, then ${}^*w^* \in \cD$ (where the left and right star operation corresponds to the same $\ol{i}$ for some $1\leq i \leq n$) and $\shift w\shift^{-1} \in \cD$.
\end{lem}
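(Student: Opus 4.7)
The strategy is to show that both operations ${}^*\cdot^*$ and $\shift \cdot \shift^{-1}$ induce based ring isomorphisms between the appropriate commutative subrings $\cJ_{\lc^{-1}\cap \lc}$ of the asymptotic Hecke algebra, and then to invoke the defining property of $\cD$: namely that $w\in \cD$ iff $\ft_w$ is the unit of $\cJ_{\lc^{-1}\cap \lc}$, where $\lc$ is the left cell of $w$ (see \ref{sec:setup}). Since ring isomorphisms send units to units, this will immediately give the desired conclusions.

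\textbf{Case of $\shift$-conjugation.} The element $\shift$ has length zero in $\extSn$ and conjugation by $\shift$ realizes the cyclic rotation of the affine Dynkin diagram. Hence $x\mapsto \shift x\shift^{-1}$ is a length-preserving bijection of $\extSn$ that extends to a $\bZ[v,v^{-1}]$-algebra automorphism of the affine Hecke algebra preserving the Kazhdan--Lusztig basis via $C_x\mapsto C_{\shift x\shift^{-1}}$. This descends to a ring automorphism of $\cJ$ given by $\ft_x\mapsto \ft_{\shift x\shift^{-1}}$, which permutes left cells by $\lc\mapsto \shift\lc\shift^{-1}$, and therefore restricts to a ring isomorphism $\cJ_{\lc^{-1}\cap \lc}\xrightarrow{\sim}\cJ_{(\shift\lc\shift^{-1})^{-1}\cap(\shift\lc\shift^{-1})}$. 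If $\ft_w$ is the unit of the source, then $\ft_{\shift w\shift^{-1}}$ is the unit of the target, so $\shift w\shift^{-1}\in \cD$.

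\textbf{Case of the symmetric star operation.} For $*\sim \ol{i}$, the map $x\mapsto {}^*x^*$ (when defined) carries the left cell of $w$ bijectively to another left cell $\lc'$, and in particular induces a bijection $\lc^{-1}\cap \lc\to (\lc')^{-1}\cap \lc'$. The crucial input is that this bijection is a based ring isomorphism $\cJ_{\lc^{-1}\cap \lc}\cong \cJ_{(\lc')^{-1}\cap \lc'}$: the structure constants $\gamma_{u,v,z^{-1}}$ of $\cJ$ are invariant under simultaneous application of left and right star operations to $u,v,z$ (using the same $\ol{i}$) whenever all three are defined. This invariance goes back to Lusztig's cells series and is recorded in \cite[Chapter 18]{lus14:hecke}; it implies that the unit of $\cJ_{\lc^{-1}\cap \lc}$ maps to the unit of $\cJ_{(\lc')^{-1}\cap \lc'}$, giving ${}^*w^*\in \cD$.

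\textbf{Main obstacle.} The $\shift$-conjugation half is essentially formal because conjugation by $\shift$ is manifestly an automorphism of the Hecke algebra permuting the Kazhdan--Lusztig basis. The star operation half is more delicate since neither $x\mapsto {}^*x$ nor $x\mapsto x^*$ is an algebra homomorphism on its own; only their symmetric composition respects the based ring structure on the commutative subring associated to a left cell. Establishing (or citing) the requisite invariance of the structure constants $\gamma_{u,v,z^{-1}}$ under simultaneous star operations is the technical heart of the argument.
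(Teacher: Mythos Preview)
Your proposal is correct and follows the standard approach. The paper's own proof is a one-line citation: ``It directly follows from \cite[Proposition 1.4.6]{xi02},'' so you have in fact supplied more detail than the paper does. Your argument for the $\shift$-conjugation case is exactly right, and your identification of the invariance of the structure constants $\gamma_{u,v,z^{-1}}$ under simultaneous star operations as the technical core of the second case is also correct; this is precisely what underlies Xi's proof and is recorded in the paper itself as Lemma~\ref{lem:gamma} (citing \cite[Theorem 1.6.2]{xi02}). One minor point: your reference to \cite[Chapter 18]{lus14:hecke} is less precise than citing Xi directly, since the star-operation invariance in the form you need is more explicitly stated there.
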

\begin{proof} It directly follows from \cite[Proposition 1.4.6]{xi02}.
\end{proof}
\begin{lem} \label{lem:distSn} $\cD \cap \Sym_n$ consists of all the involutions in $\Sym_n$. In particular, $w_0^\lambda \in \cD$.
\end{lem}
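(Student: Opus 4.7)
The plan is to verify the equality $\cD \cap \Sym_n = \{w \in \Sym_n : w^2 = \mathrm{id}\}$ and then to deduce the ``in particular'' part as a direct corollary.

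For the forward inclusion, I would invoke the general fact that every distinguished involution of an (extended) affine Coxeter group is an involution. This is immediate from the characterization recalled in \S\ref{sec:setup}: $\ft_w$ is the unique unit of the commutative ring $\cJ_{\lc^{-1}\cap \lc}$ (where $\lc$ is the left cell of $w$), which forces $w^{-1} = w$.

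For the reverse inclusion, my plan is a two-step argument. First, recall the classical fact that in the finite symmetric group $\Sym_n$, viewed as a Coxeter group in its own right, distinguished involutions coincide with ordinary involutions. The cleanest justification is a counting argument: left Kazhdan--Lusztig cells of $\Sym_n$ are parametrized by standard Young tableaux of size $n$ (via the $Q$-tableau of RSK), each left cell contains a unique distinguished involution which is necessarily itself an involution, and involutions of $\Sym_n$ are also parametrized by standard Young tableaux via $w \mapsto P(w) = Q(w)$; matching these bijections forces every involution to be distinguished. Second, I would apply the parabolic-descent principle: if $w$ lies in a standard parabolic subgroup and is a distinguished involution there, then $w$ is a distinguished involution of the ambient group. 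Applied to $\Sym_n \subseteq \extSn$, this completes the argument. Relevant references are \cite{xi02} (cf.\ the discussion around \cite[Proposition 1.4.6]{xi02}) and Lusztig's treatment in \cite{lus87:cell, lus14:hecke}.

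The ``in particular'' statement then follows immediately, since $w_0^\lambda$ is the longest element of $\Sym_{\lambda'} \subseteq \Sym_n$, hence an involution in $\Sym_n$. The main obstacle I anticipate is the parabolic-descent step in the extended (rather than merely affine) setting, because $\extSn$ itself is not a Coxeter group. I plan to handle this by first descending from $\Sym_n$ to the Coxeter group $\affSn$, where parabolic descent for distinguished involutions is standard, and then extending to $\extSn$ using the already-established $\shift$-conjugation invariance of $\cD$ (Lemma \ref{lem:distconn}) together with the decomposition $\extSn = \affSn \rtimes \langle \shift \rangle$.
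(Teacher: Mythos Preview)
Your proposal is correct and follows essentially the same two-step route as the paper's one-line proof: every involution of $\Sym_n$ is distinguished in $\Sym_n$ itself, and then parabolic compatibility (which the paper packages as \cite[Corollary 1.9(d)]{lus87:cell}) promotes this to the ambient group.

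One small remark on your handling of the extended setting: the fix you propose---using the $\shift$-conjugation invariance of $\cD$ from Lemma \ref{lem:distconn} together with $\extSn = \affSn \rtimes \langle \shift \rangle$---does not actually do the job. That lemma only moves elements around \emph{within} $\cD_{\extSn}$; it says nothing about passing from $\cD_{\affSn}$ to $\cD_{\extSn}$. The correct (and easier) observation is that since $\shift$ has length zero, the $a$-function and the Kazhdan--Lusztig polynomials $P_{\mathrm{id},w}$ for $w \in \affSn$ are the same whether computed in $\affSn$ or in $\extSn$, so the defining equation $l(w) = a(w) + 2\deg P_{\mathrm{id},w}$ for distinguished involutions is insensitive to the extension. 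Alternatively, Lusztig's results in \cite{lus87:cell} already apply to extended affine Weyl groups, so no separate reduction is needed.
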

\begin{proof} This follows from \cite[Corollary 1.9(d)]{lus87:cell} and the fact that every involution in $\Sym_n$ is distinguished.
\end{proof}

\begin{proof}[Proof of Theorem \ref{thm:dist}] By Lemma \ref{lem:conn}(c) and Lemma \ref{lem:distconn}, any distinguished involution can be obtained from another one in the same two-sided cell by applying $w\mapsto {}^*w^*$ and $w\mapsto \shift w \shift^{-1}$ several times. Therefore, by Proposition \ref{prop:shift} and \ref{prop:star}, it suffices to show that $\Phi(w) = (T, T, 0)$ for at least one distinguished involution in each two-sided cell. Now the result follows from the fact that $w_0^\lambda \in \tsc_\lambda\cap\cD$ by Lemma \ref{lem:distSn} and $\Phi(w_0^\lambda) = (\Ta_\lambda, \Ta_\lambda, 0)$ by Lemma \ref{lem:antican}.
\end{proof}

\begin{example}
Let $n=9$ and consider the tabloid $T = \tableau[sY]{\ol{2}, \ol{4}, \ol{6}, \ol{9} \\ \ol{3}, \ol{7}, \ol{8} \\ \ol{1}, \ol{5}}$. Applying the reverse AMBC construction to $(T, T, 0)$ we get the distinguished involution $[-3,5,3,7,2,10,4,8,9] \in \widetilde{\Sym_9}$. 
\end{example}

\section{Structure of asymptotic Hecke algebras attached to two-sided cells}\label{sec:jring}

Let $\tsc=\tsc_\lambda$ for some $\lambda \vdash n$ and recall the definition of $\vv{\varepsilon}: (\Ta_\lambda)^{-1} \cap \Ta_\lambda \rightarrow \Dom(F_\lambda)$ in Section \ref{sec:xibij}.
 A conjecture of Lusztig \cite{lus89:cell}, proved by Xi \cite{xi02} for $G=GL_n$, states that there exists an isomorphism between $\cJ_{\tsc_\lambda}$ and $\Mat_{\chi\times\chi} (\Rep(F_\lambda))$ where $\chi = \frac{n!}{\lambda_1!\lambda_2!\cdots}$. Furthermore, it restricts to an isomorphism $\cJ_{(\lca_\lambda)^{-1} \cap \lca_\lambda} \simeq \Rep(F_\lambda)$ which maps $\ft_w$ to $V(\vv{\varepsilon}(w))$. 
 
One problem of Xi's construction is that the isomorphism $\cJ_{\tsc_\lambda}\simeq \Mat_{\chi\times\chi} (\Rep(F_\lambda))$ is not canonical; it depends on the choice of the identification of each left cell in $\tsc_\lambda$ with $\lcc_\lambda$ using star operations and multiplication by $\shift$. Here we propose a more canonical construction of such an isomorphism using AMBC.

\begin{defn} For an element $w\in \extSn$ such that $\Phi(w) = (P, Q, \vv{\rho})$, we denote $\ft_w$ by $\ft(P, Q, \vv{\rho})$. (Here $\vv{\rho}$ is always dominant with respect to $(P,Q)$.) Also we define $\ft_{P,Q} \colonequals \ft(P, Q, \offset_{P,Q})$.
\end{defn}
Our claim is that there exists an isomorphism such that $\ft_{P,Q}$ corresponds to an ``elementary matrix''. The main result of this section is the following theorem.
\begin{thm} \label{thm:matisom} Let us label the left cells in $\tsc=\tsc_\lambda$ by $\lc_1, \lc_2, \ldots, \lc_\chi$. Then there exists an algebra isomorphism $\Upsilon=\Upsilon_\lambda\colon \cJ_{\tsc} \rightarrow \Mat_{\chi\times \chi} (\Rep(F_\lambda))$ such that if $w \in \lc_i^{-1} \cap \lc_j$ and $\Phi(w) = (P, Q, \offset_{P,Q}+\vv{\rho})$, then $\Upsilon(\ft_w)$ is the matrix whose $(i,j)$-entry is $V(\rev_\lambda(\vv{\rho}))$ and other entries are zero.
\end{thm}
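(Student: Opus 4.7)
The plan is to verify that $\Upsilon$, prescribed on the basis $\{\ft_w\}$ as stated, is well-defined, bijective, and multiplicative.

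First, well-definedness and bijectivity of $\Upsilon$ as a $\bZ$-module map follow directly from AMBC: for $w \in \tsc_\lambda$, the pair $(P(w), Q(w))$ determines the unique pair of left cells $\lc_i, \lc_j$ with $w \in \lc_i^{-1}\cap\lc_j$, so the matrix slot for $\Upsilon(\ft_w)$ is unambiguous. Bijectivity then follows since, as $w$ ranges over $\lc_i^{-1}\cap\lc_j$, the dominance condition on $\Odom$ guarantees that $\rev_\lambda(\vv{\rho}(w))$ sweeps out $\Dom(F_\lambda)$, matching $\{\ft_w\}_{w\in\tsc_\lambda}$ bijectively with the standard $\bZ$-basis of $\fM$.

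For multiplicativity on diagonal blocks $\cJ_{\lc_i^{-1}\cap\lc_i}$, I first treat the anti-canonical case $\lc_i = \lca_\lambda$: Theorem \ref{thm:xibij} identifies $\rev_\lambda(\vv{\rho}(w))$ with Xi's $\vv{\varepsilon}(w)$, so Xi's ring isomorphism $\cJ_{\lca_\lambda^{-1}\cap\lca_\lambda} \xrightarrow{\sim} \Rep(F_\lambda)$, $\ft_w \mapsto V(\vv{\varepsilon}(w))$, coincides with $\Upsilon$ there. For arbitrary $\lc_i$, Lemma \ref{lem:conn}(c) supplies a chain of operations $w\mapsto\shift w\shift^{-1}$ and $w\mapsto{}^*w^*$ linking $\cJ_{\lc_i^{-1}\cap\lc_i}$ to $\cJ_{\lca_\lambda^{-1}\cap\lca_\lambda}$; these induce ring isomorphisms between the respective diagonal sub-rings (standard based-ring theory of cells), and by the ``In particular'' clauses of Propositions \ref{prop:shift} and \ref{prop:star} they preserve the $\vv{\rho}$-vector when $P = Q$. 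Thus $\Upsilon$ commutes with the transport and is a ring isomorphism on each diagonal block.

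For cross-block multiplicativity, standard cell theory yields $\ft_u\ft_v = 0$ unless $Q(u) = P(v)$, matching the vanishing of the corresponding product of matrix units in $\fM$. In the nontrivial case $Q(u) = P(v) = P_j$, I would introduce the matrix-unit candidates $e_{ij} := \ft(P_i, P_j, \offset_{P_i,P_j})$, so that $\Upsilon(e_{ij})$ has $\mathbf{1}$ at position $(i,j)$ and zero elsewhere; by Theorem \ref{thm:dist}, $e_{ii} = \ft_{d_i}$ is the unit of $\cJ_{\lc_i^{-1}\cap\lc_i}$. Xi's theorem implies each off-diagonal block $\cJ_{\lc_i^{-1}\cap\lc_j}$ is free of rank one over either diagonal block, so each $\ft_u$ can be written as $e_{ij} \cdot y_u$ with $y_u \in \cJ_{\lc_j^{-1}\cap\lc_j}$ corresponding under $\Upsilon$ to $V(\rev_\lambda(\vv{\rho}(u)))$ at position $(j,j)$. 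Provided the matrix-unit identity $e_{ij}e_{jk} = e_{ik}$ holds, multiplicativity of $\Upsilon$ across blocks reduces to the already-proven diagonal case. This identity is established by transport via Lemma \ref{lem:conn}(a) and Propositions \ref{prop:shift}, \ref{prop:star} to the case where all three cells coincide with $\lca_\lambda$, where it collapses to the idempotency of the distinguished involution in $\lca_\lambda$.

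The principal obstacle is this last reduction: a priori $e_{ij}e_{jk}$ could differ from $e_{ik}$ by an invertible (determinantal) element of $\Rep(F_\lambda)$, and ruling this out requires carefully tracking the $\vv{\delta}$-corrections of Propositions \ref{prop:shift} and \ref{prop:star} under transport, in tandem with the AMBC inverse formula $\Phi(w^{-1}) = (Q, P, \offset_{Q,P} - \rev_\lambda(\vv{\rho}))$, which forces $e_{ij}^{-1} = e_{ji}$ inside $\extSn$ and thereby pins down the normalization.
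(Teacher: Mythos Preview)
Your overall strategy matches the paper's: establish the diagonal case by transport from $\lca_\lambda$ (this is the paper's Lemma~\ref{lem:weight}), then reduce the general product to the diagonal case via the ``matrix units'' $e_{ij}=\ft_{P_i,P_j}=\ft(P_i,P_j,\offset_{P_i,P_j})$. The gap is in the off-diagonal reduction. You assert that $\ft_u = e_{ij}\cdot y_u$ with $\Upsilon(y_u) = V(\rev_\lambda(\vv{\rho}(u)))$, but Xi's rank-one freeness gives only the \emph{existence} of some $y_u$, not its $\Upsilon$-image. What is actually needed is
\[
\ft(P,Q,\offset_{P,Q})\cdot\ft(Q,R,\offset_{Q,R}+\vv{\rho}') \;=\; \ft(P,R,\offset_{P,R}+\vv{\rho}')
\]
(and its mirror), of which $e_{ij}e_{jk}=e_{ik}$ is only the case $\vv{\rho}'=0$. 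The paper isolates this as Lemma~\ref{lem:determ}, proving the stronger statement that if either $\vv{\rho}$ or $\vv{\rho}'$ is determinantal then $\ft(P,Q,\offset_{P,Q}+\vv{\rho})\,\ft(Q,R,\offset_{Q,R}+\vv{\rho}')=\ft(P,R,\offset_{P,R}+\vv{\rho}+\vv{\rho}')$. The mechanism is a three-stage transport induction --- first $(T,T,T)$, then $(T,T,T')$, then $(P,Q,R)$ --- using Lemma~\ref{lem:gamma} together with Propositions~\ref{prop:shift} and~\ref{prop:star}; at each stage the $\vv{\delta}$-corrections are themselves determinantal and can be absorbed by substituting for $\vv{\rho}$ or $\vv{\rho}'$. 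Once Lemma~\ref{lem:determ} is in hand, the paper factors $\ft(P,Q,\offset_{P,Q}+\vv{\rho})=\ft(P,P,\vv{\rho})\,\ft_{P,Q}$ and collapses everything to Lemma~\ref{lem:weight}.

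Your inverse-formula idea does buy something: applying the anti-involution $\ft_w\mapsto\ft_{w^{-1}}$ to $e_{ij}e_{jk}=\ft(P_i,P_k,\offset_{P_i,P_k}+\vv{\sigma}_{ijk})$ gives $\vv{\sigma}_{kji}=-\vv{\sigma}_{ijk}$, hence $e_{ij}e_{ji}=e_{ii}$ for free. But for general $i,j,k$ the constraints from associativity and the anti-involution only amount to a cocycle condition on $\vv{\sigma}$, not its vanishing; you still need the explicit $\vv{\delta}$-bookkeeping (i.e.\ the content of Lemma~\ref{lem:determ}) to finish.
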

It is clear that $\Upsilon$ gives a well-defined isomorphism of abelian groups. Therefore, the theorem is true if for any $\vv{\rho}, \vv{\rho}' \in \rev_\lambda(\Dom(F_\lambda))$ we have
\begin{equation*}
\ft(P, Q, \offset_{P, Q}+\vv{\rho})\cdot\ft(Q', R, \offset_{Q',R}+\vv{\rho}') = \delta_{Q, Q'} \sum_{\vv{\rho}''} m_{\rho, \rho', \rho''}\ft(P, R, \offset_{P,R}+\vv{\rho}'')
\end{equation*}
where $V(\rev_\lambda(\vv{\rho}))\otimes V(\rev_\lambda(\vv{\rho}')) \simeq \bigoplus_{\vv{\rho}''}V(\rev_\lambda(\vv{\rho}''))^{\oplus m_{\rho, \rho', \rho''}}.$ This equation holds when $Q\neq Q'$ by \cite[Corollary 1.9]{lus87:cell}. Thus we may assume that $Q=Q'$, i.e. we only need to show that
\begin{equation}\label{eq:mult}
\ft(P, Q, \offset_{P, Q}+\vv{\rho})\cdot\ft(Q, R, \offset_{Q,R}+\vv{\rho}') =\sum_{\vv{\rho}''} m_{\rho, \rho', \rho''}\ft(P, R, \offset_{P,R}+\vv{\rho}'').
\end{equation}

\begin{example}
Let $n=9$ and $w = [-1,3,10,-5,14,-3,18,7,2]$. Then $\Phi(w) = (P, Q, \offset_{P,Q}+\vv{\rho})$, where 
$P= \tableau[sY]{\ol{2}, \ol{4}, \ol{6} \\ \ol{3}, \ol{7}, \ol{8} \\ \ol{1}, \ol{5}, \ol{9}}$,
$Q= \tableau[sY]{\ol{3}, \ol{5}, \ol{7} \\ \ol{1}, \ol{2}, \ol{8} \\ \ol{4}, \ol{6}, \ol{9}}$, 
$\offset_{P,Q} = (0,-2,-1)$, and  $\vv{\rho} = (0,1,2)$. 
Let $w'= [-6,2,-4,15,18,-2,8,22,10]$. Then $\Phi(w') = (Q, R, \offset_{Q,R}+\vv{\rho'})$, where 
$R= \tableau[sY]{\ol{4}, \ol{5}, \ol{8} \\ \ol{2}, \ol{7}, \ol{9} \\ \ol{1}, \ol{3}, \ol{6}}$, 
$\offset_{Q,R} = (0,1,-1)$, and  $\vv{\rho'} = (0,0,2)$. Tensoring $GL_3$ representations with highest weights $(2,1,0)$ and $(2,0,0)$ we get 
$$V(2,1,0) \otimes V(2,0,0) = V(4,1,0) \oplus V(3,2,0) \oplus V(3,1,1) \oplus V(2,2,1).$$
Taking into account that $\offset_{P,R} = (0,-1,-2)$ and applying inverse AMBC we conclude that 
$$\ft_w \cdot \ft_{w'} = \ft_{[-7,3,-5,18,19,-3,7,23,8]} + \ft_{[-7,7,-5,14,18,-3,8,19,12]} + \ft_{[-5,3,-3,14,18,2,7,19,8]} + \ft_{[-5,7,-3,10,14,2,8,18,12]}.$$
For example, $(0,-1,-2)+(0,1,4) = (0,0,2)$ and  $\Phi^{-1}((P,R,(0,0,2))) = [-7,3,-5,18,19,-3,7,23,8]$.
On the other hand $\ft_{w'} \cdot \ft_{w} = 0$ since $R \not = P$.
\end{example}

The rest of this section is devoted to the proof of Equation (\ref{eq:mult}). We start with the following lemma.

\begin{lem} \label{lem:gamma}
For some $u, v \in \extSn$, suppose that $\ft_u \ft_{v} = \sum_{w\in \extSn} \gamma_{u,v,w^{-1}}\ft_w$.
\begin{enumerate}[label=\textup{(\arabic*)}]
\item If $v^*$ is well-defined for some $* \sim \ol{i}$, then $w^*$ is also well-defined when $\gamma_{u,v,w^{-1}}\neq 0$, and we have $\ft_u \ft_{v^*} = \sum_{w\in \extSn} \gamma_{u,v,w^{-1}}\ft_{w^*}$.
\item If ${}^*u$ is well-defined for some $* \sim \ol{i}$, then ${}^*w$ is also well-defined when $\gamma_{u,v,w^{-1}}\neq 0$, and we have $\ft_{{}^*u} \ft_{v} = \sum_{w\in \extSn} \gamma_{u,v,w^{-1}}\ft_{{}^*w}$.
\item Suppose that $\ft_u\ft_v\neq 0$. Then for some $*\sim \ol{i}$, $u^*$ is well-defined if and only if ${}^*v$ is well-defined. In this case we have $\ft_{u{}^*} \ft_{{}^*v} = \sum_{w\in \extSn} \gamma_{u,v,w^{-1}}\ft_{w}$.
\item For any $i,j,k \in \bZ$, we have $\ft_{\shift^iu\shift^{-j}}  \ft_{\shift^jv\shift^{-k}} = \sum_{w\in \extSn} \gamma_{u,v,w^{-1}}\ft_{\shift^i w \shift^{-k}}$.
\end{enumerate}
\end{lem}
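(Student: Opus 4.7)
My plan is to derive all four statements by lifting them to identities in the extended affine Hecke algebra $H$ of $\extSn$, then descending via leading coefficients. Recall that $\cJ$ is obtained from $H$ via structure constants: the product $c_u c_v = \sum_z h_{u,v,z}\, c_z$ in the Kazhdan--Lusztig basis has leading coefficients $\gamma_{u,v,w^{-1}}$ at $z = w^{-1}$ governing $\ft_u \ft_v = \sum_w \gamma_{u,v,w^{-1}}\, \ft_w$ in $\cJ$. Any identity between two products in $H$ therefore descends to the corresponding identity in $\cJ$ upon extraction of leading coefficients.

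\textbf{Part (4).} This is immediate. The element $\shift$ has Coxeter length zero in $\extSn$, so $T_\shift \in H$ is a unit with $T_\shift c_w = c_{\shift w}$ and $c_w T_\shift^{-1} = c_{w\shift^{-1}}$ for every $w \in \extSn$. Hence
\[c_{\shift^i u\shift^{-j}}\, c_{\shift^j v\shift^{-k}} = T_\shift^i c_u T_\shift^{-j}\, T_\shift^j c_v T_\shift^{-k} = T_\shift^i (c_u c_v) T_\shift^{-k} = \sum_z h_{u,v,z}\, c_{\shift^i z \shift^{-k}},\]
and specializing $z = w^{-1}$ followed by extracting leading coefficients yields (4).

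\textbf{Parts (1), (2), (3).} By the anti-involution $w \mapsto w^{-1}$ on $\cJ$, which exchanges left cells with right cells, left star with right star, and the two factors of a product, (2) reduces to (1). For (1), I plan to invoke the classical compatibility of right star operations with the Kazhdan--Lusztig basis: under the hypothesis that $v^*$ is well-defined for $*\sim\ol{i}$, the $\bZ$-linear map $c_z \mapsto c_{z^*}$, defined on the span of $c_z$'s with $z^*$ well-defined for the same $*$, intertwines left multiplication by any $c_u$. Writing out $c_u \cdot c_{v^*}$ via this intertwining gives $c_u c_{v^*} = \sum_z h_{u,v,z}\, c_{z^*}$ on the appropriate support, and taking leading coefficients delivers (1). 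For (3), the non-vanishing hypothesis $\ft_u \ft_v \neq 0$ forces a cell-theoretic alignment of descents between $u$ and $v$ at $\{\ol{i}, \overline{i+1}\}$, so that $u^*$ is well-defined iff ${}^*v$ is; the paired right-star on $u$ and left-star on $v$ then correspond to reciprocal Hecke algebra manipulations that cancel in the product $c_{u^*} c_{{}^*v}$, leaving the underlying structure constants intact.

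\textbf{Main obstacle.} The principal technical step is the descent bookkeeping needed to verify that $w^*$ is well-defined whenever $v^*$ is and $\gamma_{u,v,w^{-1}} \neq 0$, for the same $*\sim\ol{i}$. This relies on classical properties of Kazhdan--Lusztig cells for $\extSn$, in particular on the constancy of appropriate descent sets within left and right cells, as encoded by the AMBC tableaux $P(w)$ and $Q(w)$. Once this compatibility is established, the Hecke algebra identities above transfer directly to $\cJ$ via leading coefficients, and the proof is essentially a rearrangement of standard cell-theoretic bookkeeping.
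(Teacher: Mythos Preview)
Your strategy is sound and, for parts (1)--(3), is essentially the route Xi takes in proving the theorem the paper cites (\cite[Theorem 1.6.2]{xi02}); the paper itself does no more than invoke that reference. So you are not doing something different from the paper so much as re-deriving the cited result rather than quoting it. For part (4), your argument via $T_\shift$ in $H$ is equivalent to the paper's one-liner that $\ft_{\shift w}=\ft_\shift\ft_w$, $\ft_{w\shift}=\ft_w\ft_\shift$, and $\ft_{\shift^{-1}}=\ft_\shift^{-1}$ in $\cJ$.

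That said, what you have written is a plan, not a proof. The load-bearing claim in (1)---that $c_z\mapsto c_{z^*}$ intertwines left multiplication by $c_u$ on the relevant span---is exactly the nontrivial content of Xi's theorem (ultimately going back to Lusztig's analysis of star operations on the Kazhdan--Lusztig basis), and you have asserted it rather than proved it. Likewise, your ``Main obstacle'' paragraph correctly isolates the bookkeeping needed, but note that in the paper's Knuth-move convention, well-definedness of $w^*$ for $*\sim\ol i$ is \emph{not} determined by $R(w)$ alone; it is determined by $Q(w)$ (see the paragraph after the definition of $T^*$). The argument you want is: $\gamma_{u,v,w^{-1}}\neq 0$ forces $Q(w)=Q(v)$, hence $Q(w)^*$ is well-defined iff $Q(v)^*$ is, hence $w^*$ is well-defined iff $v^*$ is. The analogous statement for (3) uses $Q(u)=P(v)$ when $\ft_u\ft_v\neq 0$. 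If you intend a self-contained proof rather than a citation, these are the points that need to be written out.
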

\begin{proof} The first three statements follows from \cite[Theorem 1.6.2]{xi02}. The last one follows from the fact that $\ft_{\shift w}=\ft_{\shift}\ft_w$, $\ft_{w\shift}=\ft_w\ft_{\shift}$, and $\ft_{\shift^{-1}} = \ft_{\shift}^{-1}$.
\end{proof}

Now suppose that one of $\vv{\rho}, \vv{\rho}' \in \rev_\lambda(\Dom(F_\lambda))$ is determinantal. Then we know that 
$$V(\rev_\lambda(\vv{\rho}))\otimes V(\rev_\lambda(\vv{\rho}')) \simeq V(\rev_\lambda(\vv{\rho})+\rev_\lambda(\vv{\rho}')).$$
Here we prove (\ref{eq:mult}) in an analogous situation.

\begin{lem}\label{lem:determ} Suppose that one of $\vv{\rho}, \vv{\rho}' \in \rev_\lambda(\Dom(F_\lambda))$ is determinantal. Then 
\begin{gather*}
\ft(P, Q, \offset_{P,Q}+\vv{\rho})\cdot \ft(Q, R, \offset_{Q,R}+\vv{\rho}')=\ft(P, R, \offset_{P,R}+\vv{\rho}+\vv{\rho}').
\end{gather*}
In particular, we have $\ft_{P,Q}\ft_{Q,R}=\ft_{P,R}$.
\end{lem}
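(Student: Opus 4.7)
The plan is to reduce to the case $P = Q = R = \Ta_\lambda$ using the transport-of-structure formulas in Lemma \ref{lem:gamma}, and then invoke Theorem \ref{thm:xibij} together with the fact that tensoring with an irreducible representation of determinantal highest weight of a reductive group is multiplication by a character.

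To normalize the triple of tableaux, I would apply Lemma \ref{lem:gamma} in three commuting families of operations: left operations on $u$ (parts (2) and (4) with $j=k=0$) move the tableau $P$; right operations on $v$ (parts (1) and (4) with $i=j=0$) move $R$; and compatible middle operations (part (3) for stars, part (4) with $i=k=0$ for shifts) move $Q$ while preserving both $P$ and $R$. By Lemma \ref{lem:conn}(a)--(b), the target $\Ta_\lambda$ can be reached in each of the three slots. Since the three families act on disjoint sides, I may assume $P = Q = R = \Ta_\lambda$.

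The key check is to track the offset-adjusted vectors $\vv{\rho}$ and $\vv{\rho}'$ through this reduction, using Propositions \ref{prop:shift} and \ref{prop:star}. Every correction that appears is a $\pm\vv{\delta}(T, k)$ for some tabloid $T$ and some $k \in \{1, n\}$, and such vectors are themselves determinantal by construction. Hence the hypothesis that one of $\vv{\rho}, \vv{\rho}'$ is determinantal is preserved at every stage. A short case check additionally shows that the sum $\vv{\rho} + \vv{\rho}'$ is invariant under each compatible operation: for instance, a compatible middle star with $* \sim \ol{n}$ adds $+\vv{\delta}(Q,1) - \vv{\delta}(Q,n)$ to the $u$-side and $-\vv{\delta}(Q,1) + \vv{\delta}(Q,n)$ to the $v$-side, and similarly for compatible middle shifts; under the one-sided left or right operations, $\vv{\rho} + \vv{\rho}'$ shifts in exactly the same way as the offset-adjusted vector of the (correspondingly transformed) product on the right-hand side.

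In the normalized case $P = Q = R = \Ta_\lambda$ all offsets vanish and both factors lie in $\cJ_{(\lca_\lambda)^{-1}\cap \lca_\lambda}$. By Theorem \ref{thm:xibij}, Xi's ring isomorphism identifies $\ft(\Ta_\lambda, \Ta_\lambda, \vv{\rho})$ with $V(\rev_\lambda(\vv{\rho}))$. Since $\rev_\lambda$ fixes determinantal vectors and $V(\mu)$ for determinantal $\mu$ is a one-dimensional tensor of powers of the determinant characters of the $GL_{m_j}$-factors of $F_\lambda$, we obtain $V(\rev_\lambda(\vv{\rho}_0))\otimes V(\rev_\lambda(\vv{\rho}_0')) = V(\rev_\lambda(\vv{\rho}_0 + \vv{\rho}_0'))$ whenever one of $\vv{\rho}_0, \vv{\rho}_0'$ is determinantal, yielding the desired identity; the ``in particular'' statement specializes to $\vv{\rho} = \vv{\rho}' = 0$. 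The hardest part will be the bookkeeping of the third paragraph, carrying out the case analysis for all four types of operations in Lemma \ref{lem:gamma} --- especially when the star operation or shift moves $\ol{n}$ --- so that both the determinantal property and the invariance of $\vv{\rho} + \vv{\rho}'$ are established cleanly.
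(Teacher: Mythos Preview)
Your plan is correct and matches the paper's proof in all essentials: the same base case $\mathfrak{P}(\Ta_\lambda,\Ta_\lambda,\Ta_\lambda)$ via Theorem~\ref{thm:xibij} and Xi's ring isomorphism, the same transport tools (Propositions~\ref{prop:shift} and~\ref{prop:star}, Lemma~\ref{lem:gamma}, Lemma~\ref{lem:conn}), and the same observation that all corrections are determinantal $\vv{\delta}$-vectors which cancel in the sum $\vv{\rho}+\vv{\rho}'$ under the middle moves. The only cosmetic difference is that the paper organizes the reduction as three nested inductions ($\mathfrak{P}(T,T,T)\Rightarrow\mathfrak{P}(T,T,T')\Rightarrow\mathfrak{P}(P,Q,R)$) rather than three parallel one-slot reductions, but the case analysis you anticipate is identical to theirs.
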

\begin{proof} For tabloids $P, Q, R$ of the same shape we denote by $\mathfrak{P}(P, Q, R)$ the statement that the equation above is true for the triple $(P, Q, R)$ and any $\vv{\rho}, \vv{\rho}'$ such that one of $\vv{\rho}, \vv{\rho}'$ is determinantal. The proof of this lemma consists of three steps.
\begin{enumerate}[label=\textup{\bfseries (\arabic*)}, leftmargin=0pt, itemindent=*]
\item $\mathfrak{P}(T, T, T)$ holds for any tabloid $T$. We already know that $\mathfrak{P}(\Ta_\lambda, \Ta_\lambda, \Ta_\lambda)$ holds by Theorem \ref{thm:xibij} and \cite[Theorem 8.2.1]{xi02}. Thus by Lemma \ref{lem:conn}, it suffices to show that 
$$\mathfrak{P}(T, T, T) \Rightarrow \mathfrak{P}(T^*, T^*, T^*), \mathfrak{P}(\shift(T), \shift(T), \shift(T))$$
where $T^*$ is well-defined for some $*\sim \ol{i}$. This follows from Proposition \ref{prop:shift}, Proposition \ref{prop:star}, and Lemma \ref{lem:gamma}.
\item $\mathfrak{P}(T, T, T')$ holds for any tabloid $T$ and $T'$ of the same shape. We know that it holds when $T=T'$ by the first step. Thus again by Lemma \ref{lem:conn}, it suffices to show that 
$$\mathfrak{P}(T, T, T') \Rightarrow \mathfrak{P}(T, T, T'^*), \mathfrak{P}(T,T, \shift(T')).$$
where $T'^*$ is well-defined for some $*\sim \ol{i}$. We again use Proposition \ref{prop:shift}, Proposition \ref{prop:star}, and Lemma \ref{lem:gamma}. First of all if $*\not\sim \ol{n}$, then from
$$\ft(T, T, \vv{\rho})\cdot \ft(T, T', \offset_{T,T'}+\vv{\rho}')=\ft(T, T', \offset_{T,T'}+\vv{\rho}+\vv{\rho}')$$
we have
$$\ft(T, T, \vv{\rho})\cdot \ft(T, T'^*, \offset_{T,T'^*}+\vv{\rho}')=\ft(T, T'^*, \offset_{T,T'^*}+\vv{\rho}+\vv{\rho}')$$
thus $\mathfrak{P}(T, T, T'^*)$ holds. Now if $*\sim \ol{n}$ then we have
\begin{gather*}
\ft(T, T, \vv{\rho})\cdot \ft(T, T'^*, \offset_{T,T'^*}+\vv{\rho}'+\vv{\delta}(T', 1)-\vv{\delta}(T',n))
\\=\ft(T, T'^*, \offset_{T,T'^*}+\vv{\rho}+\vv{\rho}'+\vv{\delta}(T', 1)-\vv{\delta}(T',n)).
\end{gather*}
Thus if we replace $\vv{\rho}'$ by $\vv{\rho}'-\vv{\delta}(T', 1)+\vv{\delta}(T',n)$ then we see that $\mathfrak{P}(T, T, T'^*)$ also holds in this case. Note that $\vv{\delta}(T', 1)$ and $\vv{\delta}(T',n)$ are determinantal, thus one of $\vv{\rho}, \vv{\rho}'$ is determinantal if and only if one of $\vv{\rho}, \vv{\rho}'-\vv{\delta}(T', 1)+\vv{\delta}(T',n)$ is determinantal. Finally, we also have
$$\ft(T, T, \vv{\rho})\cdot \ft(T, \shift(T'), \offset_{T,\shift(T')}+\vv{\rho}'-\vv{\delta}(T', n))=\ft(T, \shift(T'), \offset_{T,\shift(T')}+\vv{\rho}+\vv{\rho}'-\vv{\delta}(T', n)).$$
Thus if we replace $\vv{\rho}'$ by $\vv{\rho}'+\vv{\delta}(T', n)$ then it follows that $ \mathfrak{P}(T,T, \shift(T'))$ holds.
\item $\mathfrak{P}(P, Q, R)$ holds for any tabloids $P, Q, R$ of the same shape. Similarly to above, it suffices to show that
$$\mathfrak{P}(P,Q, R) \Rightarrow \mathfrak{P}(P, Q^*, R), \mathfrak{P}(P, \shift(Q), R)$$
where $Q^*$ is well-defined for some $*\sim \ol{i}$. We again use Proposition \ref{prop:shift}, Proposition \ref{prop:star}, and Lemma \ref{lem:gamma}. If $*\not\sim \ol{n}$, then from
$$\ft(P, Q, \offset_{P,Q}+\vv{\rho})\cdot \ft(Q, R, \offset_{Q,R}+\vv{\rho}')=\ft(P, R, \offset_{P,R}+\vv{\rho}+\vv{\rho}')$$
we have
$$\ft(P, Q^*, \offset_{P,Q^*}+\vv{\rho})\cdot \ft(Q^*, R, \offset_{Q^*,R}+\vv{\rho}')=\ft(P, R, \offset_{P,R}+\vv{\rho}+\vv{\rho}')$$
which implies $\mathfrak{P}(P, Q^*, R)$. On the other hand, if $*\sim \ol{n}$ then we have
\begin{gather*}
\ft(P, Q^*, \offset_{P,Q^*}+\vv{\rho}+\vv{\delta}(Q,1)-\vv{\delta}(Q,n) )\cdot \ft(Q^*, R, \offset_{Q^*,R}+\vv{\rho}'-\vv{\delta}(Q, 1)+\vv{\delta}(Q, n))
\\=\ft(P, R, \offset_{P,R}+\vv{\rho}+\vv{\rho}').
\end{gather*}
Thus by replacing $\vv{\rho}$ and $\vv{\rho}'$ with $\vv{\rho}-\vv{\delta}(Q,1)+\vv{\delta}(Q,n)$ and $\vv{\rho}'+\vv{\delta}(Q, 1)-\vv{\delta}(Q, n)$, respectively, we see that $\mathfrak{P}(P, Q^*, R)$ holds. Finally, we also have
\begin{gather*}
\ft(P, \shift(Q), \offset_{P,\shift(Q)}+\vv{\rho}-\vv{\delta}(Q, n))\cdot \ft(\shift(Q), R, \offset_{\shift(Q),R}+\vv{\rho}'+\vv{\delta}(Q, n))
\\=\ft(P, R, \offset_{P,R}+\vv{\rho}+\vv{\rho}').
\end{gather*}
Thus by replacing $\vv{\rho}$ and $\vv{\rho}'$ with $\vv{\rho}+\vv{\delta}(Q,n)$ and $\vv{\rho}'-\vv{\delta}(Q, n)$, respectively, we see that $\mathfrak{P}(P, \shift(Q), R)$ holds. 
\end{enumerate}
The lemma is proved.
\end{proof}

%

Now we prove (\ref{eq:mult}) in the case when $P=Q=R$ without any restriction on $\vv{\rho}$ or $\vv{\rho}'$
\begin{lem} \label{lem:weight} For any tabloid $T$ we have
$$\ft(T,T, \vv{\rho})\cdot \ft(T, T,\vv{\rho}') = \sum_{\vv{\rho}''} m_{\rho, \rho', \rho''}\ft(T, T,\vv{\rho}'')$$
where $V(\rev_\lambda(\vv{\rho}))\otimes V(\rev_\lambda(\vv{\rho}')) \simeq \bigoplus_{\vv{\rho}''}V(\rev_\lambda(\vv{\rho}''))^{\oplus m_{\rho, \rho', \rho''}}.$
\end{lem}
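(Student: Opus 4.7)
The plan is to induct on the tabloid $T$: establish the base case $T=\Ta_\lambda$ directly from Xi's theorem, then propagate to every other $T$ via the maps $w\mapsto {}^*w^*$ and $w\mapsto \shift w\shift^{-1}$. By Lemma~\ref{lem:conn}(c), every $\lc_T^{-1}\cap \lc_T$ is reachable from $(\lca_\lambda)^{-1}\cap \lca_\lambda$ by iterating these two moves, so it suffices to prove the base case and to check that both moves preserve the multiplication formula.

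For the base case $T=\Ta_\lambda$, elements $w$ with $\Phi(w)=(T,T,\vv{\rho})$ lie in $(\lca_\lambda)^{-1}\cap \lca_\lambda$, and Xi's ring isomorphism $\cJ_{(\lca_\lambda)^{-1}\cap \lca_\lambda}\xrightarrow{\sim}\Rep(F_\lambda)$ sends $\ft_w$ to $V(\vv{\varepsilon}(w))$. By Theorem~\ref{thm:xibij}, $\vv{\varepsilon}(w)=\rev_\lambda(\vv{\rho})$, so $\ft(T,T,\vv{\rho})$ is transported to $V(\rev_\lambda(\vv{\rho}))$ and the product in the lemma translates into the tensor product decomposition $V(\rev_\lambda(\vv{\rho}))\otimes V(\rev_\lambda(\vv{\rho}'))\simeq \bigoplus_{\vv{\rho}''}V(\rev_\lambda(\vv{\rho}''))^{\oplus m_{\rho,\rho',\rho''}}$, which is the very definition of the $m_{\rho,\rho',\rho''}$.

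For the inductive step, fix $u,v$ with $\Phi(u)=(T,T,\vv{\rho})$ and $\Phi(v)=(T,T,\vv{\rho}')$ and write $\ft_u\ft_v=\sum_w\gamma_{u,v,w^{-1}}\ft_w$; by the inductive hypothesis this sum is supported on $w\in \lc_T^{-1}\cap \lc_T$ with $\gamma_{u,v,w^{-1}}=m_{\rho,\rho',\rho''}$ when $\Phi(w)=(T,T,\vv{\rho}'')$. In the shift direction, Lemma~\ref{lem:gamma}(4) gives $\ft_{\shift u\shift^{-1}}\ft_{\shift v\shift^{-1}}=\sum_w\gamma_{u,v,w^{-1}}\ft_{\shift w\shift^{-1}}$, and the final clause of Proposition~\ref{prop:shift} sends each $(T,T,\vv{\rho}_\bullet)$ to $(\shift T,\shift T,\vv{\rho}_\bullet)$ with $\vv{\rho}_\bullet$ unchanged. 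In the star direction (assuming $T^*$ is well-defined), chain Lemma~\ref{lem:gamma}(3), (2), (1): since $Q(u)=P(v)=T$, the operations $u^*$, ${}^*v$, ${}^*(u^*)$, and $({}^*v)^*$ all exist as soon as $T^*$ does, and we obtain successively $\ft_{u^*}\ft_{{}^*v}=\sum_w\gamma_{u,v,w^{-1}}\ft_w$, then $\ft_{{}^*u^*}\ft_{{}^*v}=\sum_w\gamma_{u,v,w^{-1}}\ft_{{}^*w}$, and finally $\ft_{{}^*u^*}\ft_{{}^*v^*}=\sum_w\gamma_{u,v,w^{-1}}\ft_{{}^*w^*}$ (using that left and right stars commute). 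The final clause of Proposition~\ref{prop:star} sends each $(T,T,\vv{\rho}_\bullet)$ to $(T^*,T^*,\vv{\rho}_\bullet)$ with $\vv{\rho}_\bullet$ unchanged, so the multiplicity expansion transfers intact.

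The main obstacle is the star step: one must splice three one-sided clauses of Lemma~\ref{lem:gamma} into a simultaneous bi-star transformation and verify well-definedness at each stage. This works out cleanly because Propositions~\ref{prop:shift} and \ref{prop:star} preserve $\vv{\rho}$ along the diagonal $P=Q$, so neither the structure constants $\gamma_{u,v,w^{-1}}$ nor the weight indices $\vv{\rho}''$ shift during the induction.
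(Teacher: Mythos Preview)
Your proof is correct and follows essentially the same approach as the paper: establish the base case $T=\Ta_\lambda$ from Theorem~\ref{thm:xibij} together with Xi's result, then propagate to arbitrary $T$ via Propositions~\ref{prop:shift}, \ref{prop:star} and Lemma~\ref{lem:gamma}, using Lemma~\ref{lem:conn}(c) to guarantee that every $\lc_T^{-1}\cap\lc_T$ is reached. The paper's own proof is a one-line reference to these same ingredients (the detailed chaining you carry out is the analogue of step~(1) in the proof of Lemma~\ref{lem:determ}); your write-up simply makes explicit the splicing of parts (1)--(3) of Lemma~\ref{lem:gamma} and notes that $\ft_u\ft_v\neq 0$ (needed for part~(3)) is automatic from the inductive hypothesis since tensor products of irreducibles are nonzero.
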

\begin{proof} By Theorems \ref{thm:xibij} and \cite[Theorem 8.2.1]{xi02}, at least we know that the statement holds when $T=\Ta$. Now it follows from Proposition \ref{prop:shift}, Proposition \ref{prop:star}, and Lemma \ref{lem:gamma}.
\end{proof}


\begin{proof}[Proof of Theorem \ref{thm:matisom}]
Here we prove that equation (\ref{eq:mult}) holds. By Lemma \ref{lem:determ} we have
\begin{align*}
&\ft(P, Q, \offset_{P, Q}+\vv{\rho})\cdot\ft(Q, R, \offset_{Q,R}+\vv{\rho}')
\\&=\ft(P, P, \vv{\rho})\cdot\ft(P, Q, \offset_{P, Q})\cdot\ft(Q, Q, \vv{\rho}')\cdot \ft(Q, R, \offset_{Q,R})
\\&=\ft(P, P, \vv{\rho})\cdot\ft(P, Q, \offset_{P, Q}+ \vv{\rho}')\cdot \ft(Q, R, \offset_{Q,R})
\\&=\ft(P, P, \vv{\rho})\cdot\ft(P, P, \vv{\rho}')\cdot \ft(P, Q, \offset_{P, Q})\cdot \ft(Q, R, \offset_{Q,R})
\\&=\ft(P, P, \vv{\rho})\cdot\ft(P, P, \vv{\rho}')\cdot \ft(P, R,  \offset_{P,R})
\end{align*}
which is the same as
$$\sum_{\vv{\rho}''} m_{\rho, \rho', \rho''}\ft(P, P, \vv{\rho}'')\cdot \ft(P, R, \offset_{P,R})$$
by Lemma \ref{lem:weight}. Again by Lemma \ref{lem:determ} it is equal to
$$\sum_{\vv{\rho}''} m_{\rho, \rho', \rho''}\ft(P,  R, \offset_{P,R}+\vv{\rho}''),$$
which is what we want to prove.
\end{proof}

\section{Equality of two Lusztig-Vogan bijections} \label{sec:equality}
Let $G$ be a reductive group over $\bC$. In \cite{lus89:cell}, Lusztig defined a conjectural bijection between $\Dom(G)$ and
$$\mathbf{O}\colonequals \{ (N, \rho) \mid N \in \Lie G,  \vv{\rho} \in \Dom(F_N)\}/G$$
where $F_N$ is the reductive part of the stabilizer $Z_G(N)$ of $N \in \Lie G$ and the quotient by $G$ is with respect to adjoint action on $\Lie G$. Later, a similar bijection wes also independently conjectured by Vogan \cite{vog91}. We call such a correspondence the Lusztig-Vogan bijection.

In this section we focus on the case when $G=GL_n(\bC)$. Then we may and shall identify $\mathbf{O}$ with $\bigsqcup_{\lambda \vdash n} \Dom(F_\lambda)$. Let us describe some properties of this bijection as described in \cite[10.8]{lus87:cell}. Suppose that $\Theta :  \Dom(GL_n)  \rightarrow \mathbf{O}=\bigsqcup_{\lambda \vdash n} \Dom(F_\lambda)$ is such a bijection. We regard $\Dom(GL_n)$ as a subset of $\extSn$ by identifying $\vv{\mu} = (\mu_1, \mu_2, \ldots, \mu_n)$ with $w_{\vv{\mu}}\colonequals [n\mu_1+1, n\mu_2+2, \ldots, n\mu_n+n] \in \extSn$. Then we have a decomposition of $\extSn$ into double $\Sym_n$ cosets
$$\extSn = \bigsqcup_{\vv{\mu} \in \Dom(GL_n)} \Sym_n w_{\vv{\mu}} \Sym_n.$$
In particular, there exists an one-to-one correspondence between $\Dom(GL_n)$ and double $\Sym_n$-cosets of $\extSn$, i.e. $\Dom(GL_n) \simeq \Sym_n\backslash \extSn/\Sym_n$.

Recall that $\extSn_f$ is defined to be the set of minimal length elements in each left $\Sym_n$-coset. We define ${}_f\extSn_f \colonequals (\extSn_f)^{-1} \cap \extSn_f$, i.e. the set of minimal length elements in each double $\Sym_n$-coset. Then clearly we have a bijection between ${}_f\extSn_f$ and double $\Sym_n$-cosets in $\extSn$, i.e. ${}_f\extSn_f \simeq \Sym_n\backslash \extSn/\Sym_n$. On the other hand, from the result of Section \ref{sec:cancells} we have ${}_f\extSn_f=\bigsqcup_{\lambda \vdash n} (\lcc_{\lambda})^{-1}\cap \lcc_{\lambda}.$
Therefore, we have a chain of bijections
$$\tilde{\Theta}\colon\bigsqcup_{\lambda \vdash n} (\lcc_\lambda)^{-1}\cap \lcc_\lambda = {}_f\extSn_f \rightarrow  \Sym_n \backslash \extSn / \Sym_n\rightarrow \Dom(GL_n) \xrightarrow{\Theta}  \bigsqcup_{\lambda \vdash n} \Dom(F_\lambda).$$
Now \cite[10.8]{lus87:cell} asserts that $\tilde{\Theta}$ should satisfy that $\tilde{\Theta}((\lcc_\lambda)^{-1}\cap \lcc_\lambda) = \Dom(F_{\lambda})$, and furthermore it induces an algebra isomorphism
$$\cJ_{(\lcc_\lambda)^{-1}\cap \lcc_\lambda}\simeq \Rep(F_\lambda): \ft_w \mapsto V(\tilde{\Theta}(w)).$$

We already have one candidate deduced from the result of Section \ref{sec:jring}. Indeed, recall that there is an algebra isomorphism $\Upsilon_\lambda\colon \cJ_{\tsc} \rightarrow \Mat_{\chi\times \chi} (\Rep(F_\lambda))$
which restricts to an isomorphism $\Upsilon_\lambda\colon \cJ_{(\lcc_\lambda)^{-1}\cap \lcc_\lambda} \rightarrow \Rep(F_\lambda).$ It sends $\ft_w$ for $w\in (\lcc_\lambda)^{-1}\cap \lcc_\lambda$ to $\rev_\lambda(\vv{\rho}(w)) \in \Dom(F_\lambda)$. Let us define
$$\tilde{\Theta}_1:\bigsqcup_{\lambda\vdash n} (\lcc_\lambda)^{-1}\cap \lcc_\lambda  \rightarrow  \bigsqcup_{\lambda \vdash n} \Dom(F_\lambda)$$
to be the disjoint union of such bijections. Then it is clear that $\tilde{\Theta}_1$ satisfies the properties of $\tilde{\Theta}$ above. In other words, the composition
$$\Theta_1 \colon \Dom(GL_n) \simeq \Sym_n \backslash \extSn / \Sym_n \simeq  \bigsqcup_{\lambda \vdash n} (\lcc_\lambda)^{-1}\cap \lcc_\lambda \xrightarrow{\tilde{\Theta}_1} \bigsqcup_{\lambda \vdash n} \Dom(F_\lambda) = \mathbf{O}$$
is a Lusztig-Vogan bijection.

There is another realization of such a bijection studied by Achar, Bezrukavnikov, Ostrik, and Rush. First of all, for a reductive group $G$ over $\bC$, Ostrik established such a bijection using a $G\times \bC^\times$-equvariant $K$-theory of the nilpotent cone of $\Lie G$ \cite{ost00}. Also, Bezrukavnikov defined two bijections in terms of a bounded derived category of $G$-equivariant coherent sheaves on the nilpotent cone of $\Lie G$ \cite{bez03} and a tensor category attached to each two-sided cell in the affine Weyl group of $G$ \cite{bez04}. Later, it is proved that they are indeed the same as one another \cite{bez09}.

For $G=GL_n(\bC)$, Achar also described such a bijection in a combinatorial way \cite{ach01} and proved that this bijection is the same as the one defined by Bezrukavnikov and Ostrik \cite{ach04}. Later, his method is simplified by Rush \cite{rus17:thesis, rus17}. From now on we denote this bijection by $\Theta_2 : \Dom(GL_n(\bC)) \rightarrow \mathbf{O}$.

Note that the bijection $\tilde{\Theta}_2 : \bigsqcup_{\lambda\vdash n} (\lcc_\lambda)^{-1}\cap \lcc_\lambda  \rightarrow  \bigsqcup_{\lambda \vdash n} \Dom(F_\lambda)$ induced from $\Theta_2$ satisfies the properties that $\tilde{\Theta}$ above possesses. Indeed, $\tilde{\Theta}_2$ is derived from the equivalence of two tensor categories $\textup{Rep}(Z_\tsc) \rightarrow \mathcal{A}^f_\tsc$ as in \cite[1.7]{bez09}. (Here $Z_\tsc$ is the stabilizer of a nilpotent element $N\in \Lie GL_n(\bC)$ whose orbit corresponds to the two-sided cell $\tsc$.) If we take the $K$-theory of this equivalence, then we have an algebra isomorphism $\Rep(F_\tsc) \simeq \Rep(Z_\tsc) \rightarrow \cJ_{(\lcc_\tsc)^{-1}\cap \lcc_\tsc}$ which sends $\vv{\rho} = \tilde{\Theta}_2(w)$ to $\ft_w$ for $w\in(\lcc_\tsc)^{-1}\cap \lcc_\tsc$, from which the claim follows. (Since $F_\tsc$ is the reductive part of $Z_\tsc$, we may identify $\Rep(F_\tsc)$ and $\Rep(Z_\tsc)$ canonically.)

We claim that these two realizations of the Lusztig-Vogan bijection coincide, i.e.
\begin{thm}\label{thm:lvbij} We have $\Theta_1=\Theta_2$, or equivalently $\tilde{\Theta}_1 = \tilde{\Theta}_2$.
\end{thm}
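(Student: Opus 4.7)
The plan is to compare $\tilde{\Theta}_1$ and $\tilde{\Theta}_2$ by identifying both with the output of Rush's explicit combinatorial algorithm \cite{rus17:thesis, rus17}, which computes $\tilde{\Theta}_2$ on $\Dom(GL_n)$ via a step-by-step procedure producing the partition $\lambda$ and the weight $\vv{\rho} \in \Dom(F_\lambda)$ from $\vv{\mu}$. The first task is to make $\tilde{\Theta}_1$ equally explicit: given $\vv{\mu} \in \Dom(GL_n)$, form $w_{\vv{\mu}} = [n\mu_1+1, n\mu_2+2, \ldots, n\mu_n+n]$, identify the unique minimal-length element $w \in (\lcc_\lambda)^{-1} \cap \lcc_\lambda$ of the double coset $\Sym_n w_{\vv{\mu}} \Sym_n$, and then read off $\tilde{\Theta}_1(\vv{\mu}) = \rev_\lambda(\vv{\rho}(w))$ by running AMBC on $w$. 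The shape $\lambda$ is determined by $P(w) = Q(w) = \Tc_\lambda$, and the altitudes of the streams extracted by successive forward steps of AMBC assemble into $\vv{\rho}(w)$.

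Next I would recast this AMBC computation inductively: each forward step extracts a single stream whose density equals $\lambda_1'$ and whose altitude is $\rho_1(w)$, and the remaining partial permutation should, after an appropriate identification, again be a minimal-length representative for a smaller dominant weight. Lemma \ref{lem:strfam} provides the key structural fact that allows this induction, as it guarantees that the stream extracted by the forward step, together with a complete stream family of the remaining partial permutation, assembles into a complete stream family of $w$ with matching altitudes. In parallel, Rush's algorithm also has a column-wise recursive structure, and the technical heart of the proof will consist in exhibiting, step by step, the bijective correspondence between one iteration of Rush's procedure and one forward step of AMBC applied to the minimal-length representative.

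The main obstacle will be reconciling the indexing conventions of the two algorithms. Rush's procedure produces coordinates indexed by column lengths of $\lambda$, while AMBC produces $\vv{\rho}(w)$ indexed by row lengths; this is the origin of $\rev_\lambda$ in the definition of $\tilde{\Theta}_1$, but confirming that the reversal aligns the orderings at every recursive step, together with the compatibility of the $\offset_{P,Q}$ conventions, will require careful bookkeeping. Once this match is established --- in practice by first handling the case in which $\vv{\mu}$ is determinantal, where Lemma \ref{lem:determ} together with Proposition \ref{prop:shift} reduces the claim to a power-of-$\shift$ computation, and then propagating by induction on $\lambda_1$ using the recursion above --- the equality $\tilde{\Theta}_1 = \tilde{\Theta}_2$ will follow.
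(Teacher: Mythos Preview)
Your plan diverges substantially from the paper's proof, and as written it has genuine gaps. The paper's central reduction is Lemma~\ref{lem:Jringgen}: because both $\tilde{\Theta}_1$ and $\tilde{\Theta}_2$ induce \emph{ring} isomorphisms $\cJ_{(\lcc_\lambda)^{-1}\cap\lcc_\lambda}\simeq\Rep(F_\lambda)$, the composite $\Theta_2\circ\Theta_1^{-1}$ restricts to a ring automorphism of $\Rep(F_\lambda)$, and it suffices to show it fixes a set of algebra generators. The paper takes $Y_\lambda=\{0\}\cup\{\vv{\rho}_\lambda(r,s)\}$, computes $\Theta_2^{-1}$ on those via the Achar--Rush norm-minimization description (Lemma~\ref{lem:theta2cal}), and then verifies by explicit backward-AMBC that $\Theta_1^{-1}$ agrees there. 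Your plan never invokes this algebraic reduction; instead you propose to match $\Theta_1$ and $\Theta_2$ on \emph{every} $\vv{\mu}$ by aligning each AMBC forward step with an iteration of Rush's procedure. Even if such a direct bijective match exists, you have given no evidence that Rush's algorithm admits a recursive structure that lines up with a single forward step, and proving it in full generality is far harder than checking generators.

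There are also concrete errors in the details you do give. Lemma~\ref{lem:strfam} is stated for the \emph{anti-canonical} cell $(\lca_\lambda)^{-1}\cap\lca_\lambda$, whereas the Lusztig--Vogan bijection runs through the \emph{canonical} cell $(\lcc_\lambda)^{-1}\cap\lcc_\lambda$; the lemma does not apply as you use it. The first forward step extracts a stream of density $\lambda_1$, not $\lambda_1'$. The paper's induction is on $l(\lambda)$ (peeling off a row), not on $\lambda_1$; and the identification of $\fw(w)$ with a minimal-length representative for a smaller dominant weight is carried out in the paper only for the special inputs $\vv{\rho}\in Y_\lambda$, not for arbitrary $\vv{\rho}$. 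Finally, the base case ``$\vv{\mu}$ determinantal'' is not what is needed: the paper checks $\vv{\rho}=0$ and each $\vv{\rho}_\lambda(r,s)$ separately, and Lemma~\ref{lem:determ} plays no role in this part of the argument.
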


Let us show how this can be used to compute Lusztig-Vogan bijection in practice. 

\begin{example}
 Let $n=7$ and take a dominant weight $\vv{\mu} = (5,1,1,1,-2,-2,-2) \in \Dom(GL_7)$. Then $w_{\vv{\mu}} = [36,9,10,11,-9,-8,-7]$. Rearranging this window in increasing order we find the element
$[-9,-8,-7,9,10,11,36] \in {}_f \widetilde{\Sym_7}_f$ in the same double $\Sym_7$-coset $\Sym_7 \backslash \widetilde{\Sym_7}/ \Sym_7$ as $w_{\vv{\mu}}$. Applying to it the AMBC construction we get $(T, T, (-2,1,3))$, where 
$T = \tableau[sY]{\ol{5}, \ol{6}, \ol{7} \\ \ol{2}, \ol{3}, \ol{4} \\ \ol{1}} = \Tc_{(3,3,1)}$ is a canonical tableau, as expected.
We conclude that the Lusztig-Vogan bijection maps $\vv{\mu} = (5,1,1,1,-2,-2,-2)$ to the pair $(\lambda, \vv{\rho})$, where $\lambda = (3,3,1)$ is the shape of $T$ and $\rev_\lambda(\vv{\rho}) = (1,-2,3) \in \Dom(F_{\lambda})$. (Note that $\offset_{T, T}=0$.)
\end{example}

\begin{example}
Let $n=7$. Take a nilpotent orbit corresponding to $\lambda = (2,2,1,1,1)$ and let $\vv{\rho} = (0,0,1,0,-1) \in \Dom(F_{\lambda})$. We wish to apply the inverse Lusztig-Vogan bijection to the pair $(\lambda, \vv{\rho})$. We have $\Tc_{(2,2,1,1,1)} = \tableau[sY]{\ol{6}, \ol{7} \\ \ol{4}, \ol{5} \\ \ol{3} \\ \ol{2} \\ \ol{1}}$, and applying the inverse AMBC construction to the triple 
 $(\Tc_{(2,2,1,1,1)}, \Tc_{(2,2,1,1,1)}, (0,0,-1,0,1))$ we get $[-28,-8,-2,4,10,16,36] \in {}_f \widetilde{\Sym_7}_f$. Rearranging this window in the order increasing modulo $7$ we get $w = [36,16,10,4,-2,-8,-28]$. This happens to be $w_{\vv{\mu}}$ for $\vv{\mu} = (5,2,1,0,-1,-2,-5) \in \Dom(GL_7)$, which is the desired answer.  
\end{example}

\section{Proof of Theorem \ref{thm:lvbij}}
This section is devoted to the proof of Theorem \ref{thm:lvbij}. We start with the following lemma.
\begin{lem} \label{lem:Jringgen}For $\lambda =(1^{m_1}2^{m_2}\cdots)\vdash n$, we identify $\Dom(F_\lambda) = \prod_{i\geq 1} \Dom(GL_{m_i})$ and define $\vv{\rho}_\lambda(r,s)=(\rho_i)_{i\geq 1} \in \Dom(F_\lambda)$ where each $\rho_i \in \Dom(GL_{m_i})$ is set to be $\rho_i=0$ if $i\neq r$ and $\rho_r = (s,0,\ldots, 0)$. Set
$$Y_\lambda\colonequals \{0\}\sqcup \left\{ \vv{\rho}_\lambda(r,s) \in  \Dom(F_\lambda)  \mid r \in \lambda, s\geq 1\right\}.$$
If $\Theta_1^{-1}$ and $\Theta_2^{-1}$ coincide on $\bigsqcup_{\lambda \vdash n} Y_\lambda \subset \mathbf{O}$, then $\Theta_1=\Theta_2$.
\end{lem}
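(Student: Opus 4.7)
The plan is to reformulate the hypothesis as a statement about a ring automorphism of $\Rep(F_\lambda)$ and then argue that the symmetric-power characters corresponding to $Y_\lambda$, together with a Jacobi--Trudi identity, generate a subring whose localization is the whole of $\Rep(F_\lambda)$. As recalled earlier in this section, each $\tilde\Theta_i$ arises from a ring isomorphism $\hat\Theta_i\colon \cJ_{(\lcc_\lambda)^{-1}\cap\lcc_\lambda}\xrightarrow{\simeq}\Rep(F_\lambda)$ sending $\ft_w$ to $V(\tilde\Theta_i(w))$. I would set $\phi_\lambda\colonequals \hat\Theta_1\circ \hat\Theta_2^{-1}$, which is a ring automorphism of $\Rep(F_\lambda)$, and observe that $\Theta_1=\Theta_2$ is equivalent to $\phi_\lambda=\mathrm{id}$ for every $\lambda\vdash n$. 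The hypothesis that $\Theta_1^{-1}$ and $\Theta_2^{-1}$ coincide on $Y_\lambda$ translates into the statement that $\phi_\lambda(V(\vv\rho))=V(\vv\rho)$ for every $\vv\rho\in Y_\lambda$.

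Next, I would use the tensor product decomposition $\Rep(F_\lambda)=\bigotimes_{r\colon m_r\geq 1}\Rep(GL_{m_r})$. Under this decomposition, $V(\vv\rho_\lambda(r,s))$ lives in the $r$-th tensor factor and equals the character of the $s$-th symmetric power of the standard representation of $GL_{m_r}$, i.e.\ the complete homogeneous symmetric function $h_s^{(r)}$ in $m_r$ variables. Consequently $\phi_\lambda$ fixes every $h_s^{(r)}$ with $r\in\lambda$ and $s\geq 1$, and hence pointwise fixes the subring
\[
R_0 \colonequals \bigotimes_{r\colon m_r\geq 1}\bZ\bigl[h_1^{(r)},h_2^{(r)},\ldots,h_{m_r}^{(r)}\bigr],
\]
which is precisely the polynomial-representation subring of $\Rep(F_\lambda)$.

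The key algebraic observation is that the determinantal character $\det_r = e_{m_r}^{(r)}$ of each factor $GL_{m_r}$ already lies in $R_0$: the Jacobi--Trudi identity $e_{m_r}^{(r)}=\det(h_{1-i+j}^{(r)})_{1\leq i,j\leq m_r}$ expresses it as a polynomial in $h_1^{(r)},\ldots,h_{m_r}^{(r)}$. Thus $\phi_\lambda$ fixes $\det_r$ for every $r$. Because $\Rep(F_\lambda)$ is the localization $R_0[\det_1^{-1},\ldots,\det_{l(\lambda)}^{-1}]$, and any ring automorphism carries units to units, $\phi_\lambda(\det_r^{-1})=\phi_\lambda(\det_r)^{-1}=\det_r^{-1}$, so $\phi_\lambda$ fixes a generating set of $\Rep(F_\lambda)$. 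Hence $\phi_\lambda=\mathrm{id}$, and therefore $\Theta_1=\Theta_2$.

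The only slightly non-obvious step is the Jacobi--Trudi point: although $Y_\lambda$ contains no determinantal characters directly, the determinant of each $GL_{m_r}$-factor is already a polynomial in the complete homogeneous symmetric functions supplied by $Y_\lambda$, so inverting the $\det_r$'s to pass from $R_0$ to $\Rep(F_\lambda)$ is a purely formal step that any ring automorphism automatically respects.
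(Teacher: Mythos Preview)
Your proposal is correct and follows essentially the same approach as the paper. The paper phrases the key algebraic step as the equality $\bZ[e_1,\ldots,e_m]=\bZ[h_1,\ldots,h_m]$ together with $\Lambda_m=\bZ[e_1,\ldots,e_{m-1},e_m^{\pm1}]$, which is the same content as your Jacobi--Trudi observation plus the localization argument; the paper also sets up the automorphism as $V(\vv\rho)\mapsto V((\Theta_2\circ\Theta_1^{-1})(\vv\rho))$ rather than $\hat\Theta_1\circ\hat\Theta_2^{-1}$, but this is an immaterial difference.
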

\begin{proof} Let $\Lambda_m$ to be the ring of symmetric Laurent polynomials of $m$ variables. Then $\Lambda_m = \bZ[e_1, e_2, \ldots, e_{m-1}, e_m^{\pm 1}]$ where $e_a=\sum_{i_1<i_2<\cdots<i_a}x_{i_1}x_{i_2}\cdots x_{i_a}$ are elementary symmetric functions. Thus an algebra endomorphism of $\Lambda_m$ stabilizing $e_1, e_2, \ldots, e_{m-1}, e_m$ is the identity on $\Lambda_m$. Since $\bZ[e_1, e_2, \ldots, e_{m-1}, e_m] = \bZ[h_1, h_2, \ldots, h_{m-1}, h_m]$ where $h_a=\sum_{i_1\leq i_2\leq \cdots\leq i_a}x_{i_1}x_{i_2}\cdots x_{i_a}$ are complete homogeneous symmetric functions,  an algebra endomorphism of $\Lambda_m$ stabilizing $h_1, h_2, \ldots, h_{m-1}, h_m$ is also the identity on $\Lambda_m$.

Since there exists a ring isomorphism $\Rep(GL_m) \simeq \Lambda_m$ which sends $V(s, 0, 0, \ldots)$ to $h_s$ (see e.g. \cite[Chapter 7: Appendix 2]{sta86}), if we identify $\Rep(F_\lambda)$ with $\prod_{i\geq 1} \Rep(GL_{m_i})\simeq \prod_{i\geq 1} \Lambda_{m_i}$ then an algebra endomorphism of $\Rep(F_\lambda)$ stabilizing each element in $Y_\lambda$ is the identity. Now the lemma follows from the fact that $\Rep(F_\lambda) \rightarrow \Rep(F_\lambda): V(\vv{\rho}) \mapsto V((\Theta_2 \circ \Theta_1^{-1})(\vv{\rho}))$ is an algebra automorphism by the argument in the previous section.
\end{proof}

\begin{rmk} Indeed, the proof is still valid if we replace $Y_\lambda$ by $\left\{ \vv{\rho}_\lambda(r,s) \in  \Dom(F_\lambda)  \mid r \in \lambda, 1\leq s \leq m_r\right\}.$
\end{rmk}

For a partition $\lambda$, we define $\mathfrak{W}_\lambda(0)$ be the Young tableau of shape $\lambda$ whose $i$-th column is filled with $\lambda_i'-1, \lambda_i'-3, \ldots, 3-\lambda_i', 1-\lambda_i'$ from top to bottom. For example, we have
\ytableausetup{nosmalltableaux}
$$\mathfrak{W}_{(3, 3, 2, 2, 1)}(0) = \begin{ytableau}
4 & 3 & 1
\\2 & 1 & -1
\\0 & -1 
\\-2 & -3
\\-4
\end{ytableau}.
$$
Also, when $i \in \lambda$ we define the Young tableau $\mathfrak{W}_\lambda({ r, s})$ of shape $\lambda$ as follows. The entries of $\mathfrak{W}_\lambda({ r, s})$ are the same as those of $\mathfrak{W}_\lambda(0)$ except $(1, i)$-entries for $1\leq i \leq r$. Let us write $a_i, b_i$ to be the $(1, i)$-th entry of $\mathfrak{W}_\lambda(0)$ and $\mathfrak{W}_\lambda({r, s})$, respectively, for $1\leq i \leq r$. Then $b_i$ are uniquely determined by the following conditions.
\begin{enumerate}[label=$\bullet$]
\item $a_i \leq b_i$ for $1\leq i \leq r$.
\item $b_1\geq b_2\geq \cdots \geq b_r$.
\item $\sum_{i=1}^r b_i = s+\sum_{i=1}^r a_i$.
\item $\sum_{i=1}^r b_i^2$ is the minimum among the choice of $b_i$ satisfying the properties above.
\end{enumerate}
For example, we have
\ytableausetup{nosmalltableaux}
$$\mathfrak{W}_{(3, 3, 2, 2, 1)}(2,5) = \begin{ytableau}
6 & 6 & 1
\\2 & 1 & -1
\\0 & -1 
\\-2 & -3
\\-4
\end{ytableau}.
$$
(Note that only (1,1)- and (1,2)-entries are different.)

We define $\vv{\mu}_\lambda(0)$ (resp. $\vv{\mu}_\lambda(r,s)$) to be a dominant weight of $GL_n$ consisting of entries of $\mathfrak{W}_\lambda(0)$ (resp. $\mathfrak{W}_{\lambda}(r,s)$). For example we have $\vv{\mu}_{(3, 3, 2, 2, 1)}(0) = (4,3,2,1,1,0,-1,-1,-2,-3,-4)$ and $\vv{\mu}_{(3, 3, 2, 2, 1)}(2,5)= (6,6,2,1,1,0,-1,-1,-2,-3,-4)$.

\begin{lem} \label{lem:theta2cal} $\Theta_2(\vv{\mu}_\lambda(0)) = 0$ and $\Theta_2(\vv{\mu}_\lambda(r,s))=\vv{\rho}_\lambda(r,s)$.
\end{lem}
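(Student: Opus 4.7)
The plan is to invoke the explicit combinatorial description of $\Theta_2$ due to Achar \cite{ach01} (as simplified by Rush \cite{rus17:thesis, rus17}) and apply it directly to the weights $\vv{\mu}_\lambda(0)$ and $\vv{\mu}_\lambda(r,s)$. Recall that Achar's algorithm takes as input a dominant weight $\vv{\mu} \in \Dom(GL_n)$, identifies a distinguished nilpotent orbit parameter $\lambda$ by an iterative ``sorting/pairing'' procedure on the entries of $\vv{\mu}$, and outputs a dominant weight for the reductive centralizer $F_\lambda$. The crucial feature of the tableau $\mathfrak{W}_\lambda(0)$ is that its $i$-th column is the centered arithmetic progression $\lambda_i'-1, \lambda_i'-3, \ldots, 1-\lambda_i'$, so that when the columns are aligned by length (i.e.\ parts of $\lambda'$ of the same size are grouped together), each such group of $m_r$ columns is numerically ``balanced'' around zero in every row. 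This is exactly the signature pattern that Achar's algorithm collapses to the zero weight in each factor $\Dom(GL_{m_i})$.

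First I would verify $\Theta_2(\vv{\mu}_\lambda(0))=0$. The approach is to run the Rush version of Achar's algorithm on $\vv{\mu}_\lambda(0)$ and show that at each step the pairing/extraction procedure produces only zero contributions, because the symmetric spacing $\lambda_i'-1, \lambda_i'-3, \ldots, 1-\lambda_i'$ forces every local invariant computed by the algorithm to vanish. Explicitly, the entries in each row of $\mathfrak{W}_\lambda(0)$ are already weakly decreasing and differ by $2$ within a block of equal column-lengths, which is the combinatorial fingerprint of the orbit $\cO_\lambda$ together with the trivial character in $\Dom(F_\lambda)$. Once this is unpacked against Rush's procedure, the claim follows directly.

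Next I would handle $\Theta_2(\vv{\mu}_\lambda(r,s))=\vv{\rho}_\lambda(r,s)$. The weight $\vv{\mu}_\lambda(r,s)$ is obtained from $\vv{\mu}_\lambda(0)$ by redistributing an extra $s$ among the top entries of the first $r$ columns, subject to the weakly-decreasing, minimum $\ell^2$-norm constraints. Because $r \in \lambda$, these $r$ columns form a complete block of equal length $\lambda_1'=\cdots=\lambda_r'$, so the perturbation lives entirely inside the $GL_{m_r}$-factor of $F_\lambda$. The key step is to check that Achar's/Rush's algorithm commutes with this perturbation in the expected way: all blocks other than the $GL_{m_r}$-block are processed identically as in the $\vv{\mu}_\lambda(0)$ case and contribute $0$, while the $GL_{m_r}$-block processes the additional mass $s$ and extracts precisely the single-row weight $(s,0,\ldots,0)$, giving $\vv{\rho}_\lambda(r,s)$.

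The main obstacle will be the second bullet: showing that the minimum-$\ell^2$ balancing used to define $\mathfrak{W}_\lambda(r,s)$ is exactly what Achar's algorithm selects when it partitions the entries of $\vv{\mu}_\lambda(r,s)$ into rows/columns of a putative tableau. The spacing $a_i\leq b_i$ together with $b_1\geq\cdots\geq b_r$ and minimal $\sum b_i^2$ is dictated by precisely the greedy choice made in the Achar/Rush procedure to ensure the output lies in $\Dom(GL_{m_r})$; verifying this equivalence is a careful but finite check, after which both equalities of the lemma follow. I would handle it by induction on $s$, using the $s=0$ base case (the first part of the lemma) and the fact that incrementing $s$ by $1$ corresponds in Achar's algorithm to moving a single box to the first row of the $r$-th block in the manner prescribed by the $\ell^2$-minimization rule above.
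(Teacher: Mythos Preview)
Your proposal contains a factual error that would derail the second half of the argument. You assert that because $r \in \lambda$, the first $r$ columns of the Young diagram ``form a complete block of equal length $\lambda_1' = \cdots = \lambda_r'$''. This is false: the condition $r \in \lambda$ (i.e.\ $m_r>0$) is equivalent to $\lambda_r' > \lambda_{r+1}'$, not to $\lambda_1' = \cdots = \lambda_r'$. For instance, if $\lambda=(3,3,1)$ then $\lambda'=(3,2,2)$ and $r=3$ is a part of $\lambda$, but the first three columns have lengths $3,2,2$. The perturbation defining $\mathfrak{W}_\lambda(r,s)$ modifies the top entries of columns $1,\ldots,r$, and these columns generically have \emph{different} lengths. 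So your claim that ``the perturbation lives entirely inside the $GL_{m_r}$-factor'' is true, but not for the reason you give, and the mechanism by which the first $r$ columns interact with the $GL_{m_r}$-factor is precisely what needs to be explained.

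Beyond this error, the proposal is too vague at the key points: phrases like ``once this is unpacked against Rush's procedure, the claim follows directly'' and ``a careful but finite check'' are exactly where the content lies. The paper takes a different and more structured route. Rather than running the forward algorithm on $\vv{\mu}$, it uses the Achar--Rush characterization of $\Theta_2^{-1}(\vv{\rho})$ as the unique minimal-norm element of a set $X_{\vv{\rho}}$ defined by a branching condition: one seeks $\vv{\alpha}\in\Dom(L_\lambda)$ such that $V(\vv{\rho})$ occurs in the restriction $(\zeta\circ\xi)^*V(\vv{\alpha})$, and then minimizes $\|\vv{\alpha}+2\Delta_\lambda\|$. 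The restriction map is made explicit via Laurent symmetric functions, and the branching condition $\langle \prod_j s_{\alpha_j}(\vv{x}_{\geq j}), s_s(\vv{x}_r)\rangle \neq 0$ forces $\alpha_j$ to have the shape $(\beta_j+\gamma_j,\beta_j,\ldots,\beta_j)$ for $j\leq r$ with $\sum\gamma_j=s$ and $\sum_{j\leq k}\beta_j=0$ whenever $k\in\lambda$. The norm minimization then gives $\beta_j=0$ and pins down the $\gamma_j$ as exactly the quantities $b_i-a_i$ in the definition of $\mathfrak{W}_\lambda(r,s)$. This approach explains transparently why columns of different lengths still feed into a single $GL_{m_r}$-factor: it is the structure of the embedding $\zeta\circ\xi$ (the factor $GL_{m_r}$ sits diagonally inside $GL_{\lambda_j'}$ for every $j\leq r$), not any equality of column lengths.
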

\begin{proof} We briefly review some properties of $\Theta_2$ following \cite{rus17}. For $\lambda \vdash n$ and $1\leq j \leq \lambda_1$, it is clear that $\sum_{i\geq j} m_i = \lambda_j'$. Thus we may consider a standard block diagonal embedding $\zeta_j\colon\prod_{i\geq j} GL_{m_i} \rightarrow GL_{\lambda_j'}$. Let $\zeta=\prod_{j} \zeta_j \colonequals \prod_{1\leq j\leq \lambda_1}(\prod_{i\geq j} GL_{m_i} \rightarrow GL_{\lambda_j'})$ be a product of such morphisms. On the other hand, we also define $\xi_i:  GL_{m_i} \rightarrow  (GL_{m_i})^i$ to be the diagonal embedding, and let $\xi=\prod_{i\in \lambda} \xi_i : \prod_{i\in \lambda}(GL_{m_i} \rightarrow  (GL_{m_i})^i)$ be their product. Then since $\prod_{i\in \lambda}(GL_{m_i})^i = \prod_{i\in \lambda}(\prod_{i\geq j}GL_{m_i}) =  \prod_{1\leq j\leq \lambda_1}(\prod_{i\geq j} GL_{m_i})$, the composition $\zeta \circ \xi : \prod_{i\in \lambda}GL_{m_i} \rightarrow \prod_{1\leq j\leq \lambda_1}GL_{\lambda_j'}$ is well-defined. If we let $L_\lambda$ to be $\prod_{1\leq j\leq \lambda_1}GL_{\lambda_j'}$, then $\zeta \circ \xi$ defines a morphism $ F_\lambda\simeq  \prod_{i\in \lambda}GL_{m_i}  \rightarrow L_\lambda$. For example, if $\lambda=(4,4,2,1,1)$ then $\zeta \circ \xi: GL_{2}\times GL_1 \times GL_2 \rightarrow GL_5 \times GL_3 \times GL_2\times GL_2$ is defined by
$$(A, B, C) \mapsto \left( \begin{pmatrix}A & 0 &0\\ 0&B& 0\\0&0&C\end{pmatrix}, \begin{pmatrix}A & 0\\ 0&B\end{pmatrix} , A, A \right).$$


Let us investigate $(\zeta\circ\xi)^*: \Rep(L_\lambda) \rightarrow \Rep(F_\lambda)$ in terms of Laurent symmetric functions. Recall that $\Rep(GL_m) \simeq \Lambda_m$ where $\Lambda_m$ is the ring of symmetric Laurent polynomials of $m$ variables. Therefore we may identify $\Rep(F_\lambda)$ with $\bigotimes_i \Lambda(\vv{x}_i)$ where $\Lambda(\vv{x}_i)$ is the ring of Laurent symmetric functions with variables $\vv{x}_i = (x_{i1}, x_{i2}, \ldots, x_{im_i})$. Similarly, we identify $\Rep(L_\lambda)$ with $\bigotimes_j \Lambda(\vv{y}_j)$ where $\Lambda(\vv{y}_j)$ is the ring of Laurent symmetric functions with variables $\vv{y}_j=(y_{j1}, y_{j2}, \ldots, y_{j\lambda_j'})$. Then direct calculation shows that $(\zeta\circ\xi)^*$ maps $f(\vv{y}_j) \in \bigotimes_j \Lambda(\vv{y}_j)$ to $f(\vv{x}_{\geq j})$ where $\vv{x}_{\geq j}$ is the union of $\vv{x}_i$ for $i\geq j$. Furthermore, if $P\in  \bigotimes_j \Lambda(\vv{y}_j)$ is homogeneous in $\cup_j\vv{y}_j$ then the image $(\zeta\circ\xi)^*(P)$ is also homogeneous in $\cup_i \vv{x}_i$ and $(\zeta\circ\xi)^*$ is degree-preserving. For example, if $\lambda=(4,4,2,1,1)$, then
$$(\zeta\circ\xi)^*( f(y_{21},y_{22},y_{23})) = f(x_{11},x_{12},x_{21}).$$

Now suppose that we are given $\vv{\rho} \in \Dom(F_\lambda)$. For $\vv{\nu} \in \Dom(GL_n)$, defined $\vv{\nu}' \in \Dom(L_\lambda)$ which satisfies the following conditions.
\begin{enumerate}[label=(\alph*)]
\item As multisets, $\vv{\nu}$ is the same as the union of parts in $\vv{\nu}'$.
\item Let $\Delta_\lambda \in \Dom(L_\lambda)$ be the half sum of positive roots of $L_\lambda$. Then $\vv{\nu}' - 2\Delta_\lambda$ is dominant.
\item Let $V(\vv{\nu}' - 2\Delta_\lambda)$ be the irreducible representation of $L_\lambda$ of highest weight $\vv{\nu}' - 2\Delta_\lambda$. Then $V(\vv{\rho})$ appears as an irreducible constituent of $(\zeta\circ\xi)^*(V(\vv{\nu}' - 2\Delta_\lambda))$, i.e. the restriction of $V(\vv{\nu}' - 2\Delta_\lambda)$ to $F_\lambda$ under $\zeta\circ\xi$.
\end{enumerate}
Let $X_{\vv{\rho}}= \{ \vv{\nu} \in \Dom(GL_n) \mid \textup{such }\vv{\nu}' \textup{ exists}\}$.
Then according to \cite{ach04} and \cite{rus17}, we have that $\Theta_2(\vv{\mu})=\vv{\rho}$ if and only if $\vv{\mu} \in X_{\vv{\rho}}$ and $||\vv{\mu}|| = \min \{||\vv{\nu}|| \mid {\vv{\nu} \in X_{\vv{\rho}}}\}$. Also in this case $\vv{\mu}'$ is uniquely determined.

We set $\vv{\mu}'=\vv{\alpha}+2\Delta_\lambda$ where $\vv{\alpha} =(\alpha_j)_{1\leq j \leq \lambda_1}$ is such that $\alpha_j = (\alpha_{j1}, \alpha_{j2}, \ldots, \alpha_{j\lambda_j'}) \in \Dom(GL_{\lambda_j'}).$ Thus we have $\alpha_{j1}\geq \alpha_{j2}\geq \cdots\geq \alpha_{j\lambda_j'}$ by (b). Under the identification $\Rep(F_\lambda) \simeq \bigotimes_i \Lambda(\vv{x}_i)$ and $\Rep(L_\lambda) \simeq \bigotimes_j \Lambda(\vv{y}_j)$, the irreducible representation $V(\vv{\alpha}) \in \Rep(L_\lambda)$ corresponds to $\prod_js_{\alpha_j}(\vv{y}_j) \in \bigotimes_j \Lambda(\vv{y}_j)$ where $s_{\alpha_j}$ is the Schur function corresponding to $\alpha_j$. Therefore, $(\zeta\circ\xi)^* V(\vv{\alpha})\in \Rep(F_\lambda)$ is identified with $\prod_js_{\alpha_j}(\vv{x}_{\geq j}) \in \bigotimes_i \Lambda(\vv{x}_i)$.

We first suppose that $\Theta_2(\vv{\mu}) = 0$. Then (c) implies that $\prod_j s_{\alpha_j}(\vv{x}_{\geq j}) \in \bigotimes_i \Lambda(\vv{x}_i)$ has a nonzero constant term. Let $b$ be the smallest part in $\lambda$, then we have a factorization $\prod_j s_{\alpha_j}(\vv{x}_{\geq j}) = \prod_{j\leq b} s_{\alpha_{j}}(\vv{x}_{\geq j})  \prod_{j> b} s_{\alpha_j}(\vv{x}_{\geq j})$. Note that the variables $\vv{x}_b$ does not appear  in $\prod_{j> b} s_{\alpha_j}(\vv{x}_{\geq j})$. Thus for $\prod_j s_{\alpha_j}(\vv{x}_{\geq j})$ to have a constant term, we also have that $\vv{x}_b$ does not appear in $\prod_{j\leq b} s_{\alpha_{j}}(\vv{x}_{\geq j})$. Now direct calculation shows that it is only possible when there exists $\beta_j \in \bZ$ for $1\leq j \leq b$ such that $\alpha_j = (\beta_j, \beta_j, \ldots, \beta_j)$ and $\sum_{j=1}^b \beta_j=0$. Then $ \prod_{j\leq b} s_{\alpha_{j}}(\vv{x}_{\geq j})=1$ and $\prod_j s_{\alpha_j}(\vv{x}_{\geq j}) =   \prod_{j> b} s_{\alpha_j}(\vv{x}_{\geq j})$.

We iterate the above argument by enlarging $b$ to cover all the parts in $\lambda$ and conclude that there exists $\beta_j\in \bZ$ for $1\leq j \leq \lambda_1$ such that $\alpha_j=(\beta_j, \beta_j, \ldots, \beta_j)$ for all $j$ and $\sum_{j=1}^k \beta_j = 0$ whenever $k \in \lambda$. In this case we have $\prod_j s_{\alpha_j}(\vv{x}_{\geq j})=1$, thus the assumption is satisfied.
On the other hand, if we let $(\Delta_\lambda)_j$ be the component of $\Delta_\lambda$ corresponding to $GL_{\lambda_j'}$, then from the definition we have
$$2(\Delta_\lambda)_j=(\lambda_j'-1, \lambda_j'-3, \ldots, 3-\lambda_j', 1-\lambda_j').$$
Therefore in order to find $\vv{\mu}$, we need to minimize
$$\sum_{j} ((\beta_{j}+\lambda_j'-1)^2+(\beta_{j}+\lambda_j'-3)^2+\cdots +(\beta_{j}+1-\lambda_j')^2)$$
subject to the condition $\sum_{j=1}^k \beta_j = 0$ whenever $k \in \lambda$.
Now it is easy to show that the minimum is achieved when $\vv{\alpha}=0$, i.e. $\vv{\mu}=2\Delta_\lambda$. 
Thus it follows that $\vv{\mu}=\vv{\mu}_\lambda(0)$ by the definition of $\mathfrak{W}_\lambda(0)$.

We proceed to the case when $\Theta_2(\vv{\mu}) = \vv{\rho}_\lambda(r,s)$ and argue similarly to above. The condition (c) implies that $\br{\prod_js_{\alpha_j}(\vv{x}_{\geq j}), s_s(\vv{x}_r)} \neq 0$ where $\br{\ ,\ }$ is the pairing on $ \bigotimes_k \Lambda(\vv{x}_k)$ induced from the standard bilinear form on the ring of (Laurent) symmetric functions. In this case it is not hard to show that there exists $\beta_j\in \bZ$ for $1\leq j \leq \lambda_1$ and $\gamma_j\in \bN$ for $1\leq j \leq r$ such that
$$\alpha_j = \left\{\begin{aligned} &(\beta_j+\gamma_j, \beta_j, \ldots, \beta_j) & \textup{ if } 1\leq j \leq r,
\\&(\beta_j, \beta_j, \ldots, \beta_j) & \textup{ if } r<j\leq \lambda_1,
\end{aligned}\right.
$$
$\sum_{j=1}^k \beta_j = 0$ whenever $k\in \lambda$, and $\sum_{j=1}^r \gamma_j = s$. Now in order to find $\vv{\mu}$, we need to minimize
\begin{gather*}
\sum_{1\leq j\leq r} ((\beta_{j}+\gamma_j+\lambda_j'-1)^2+(\beta_{j}+\lambda_j'-3)^2+\cdots +(\beta_{j}+1-\lambda_j')^2)
\\+\sum_{r<j\leq \lambda_1} ((\beta_{j}+\lambda_j'-1)^2+(\beta_{j}+\lambda_j'-3)^2+\cdots +(\beta_{j}+1-\lambda_j')^2)
\end{gather*}
subject to the condition that $\sum_{j=1}^k \beta_j = 0$ whenever $k\in \lambda$ and $\sum_{j=1}^r \gamma_j = s$. 

It is again easy to show that $\beta_j=0$ for each $j$, thus it suffices to minimize $\sum_{1\leq j \leq r} (\gamma_j+\lambda_j'-1)^2$. Also it follows that $\vv{\alpha}+2\Delta_\lambda$ is obtained from $\vv{\mu}_\lambda(0)$ by replacing $\lambda'_1-1, \lambda'_2-1, \ldots, \lambda'_r-1$ with $\gamma_1+\lambda'_1-1, \gamma_2+\lambda'_2-1, \ldots, \gamma_r+\lambda'_r-1$. Now it is clear that $\{b_1, b_2, \ldots, b_r\}= \{\gamma_1+\lambda'_1-1, \gamma_2+\lambda'_2-1, \ldots, \gamma_r+\lambda'_r-1\}$ by considering the defining conditions of $b_i$. (Recall that $b_i$ is the $(1,i)$-entry of $\mathfrak{W}_\lambda(r,s)$.) Thus it follows that $\vv{\mu}=\vv{\mu}_\lambda(r,s)$ as desired.
\end{proof}

By Lemma \ref{lem:Jringgen} and \ref{lem:theta2cal}, in order to prove Theorem \ref{thm:lvbij} it suffices to show that $\Theta_1(\vv{\mu}_\lambda(0)) = 0$ and $\Theta_1(\vv{\mu}_\lambda(r,s))=\vv{\rho}_\lambda(r,s)$. We analyze this condition more directly in terms of AMBC.
\begin{lem} \label{lem:blockdiag} Suppose that we are given $\lambda\vdash n$, $\vv{\mu}=(\mu_1, \mu_2, \ldots, \mu_n) \in \Dom(GL_n)$, and $\vv{\rho} \in \Dom(F_\lambda)$. Then the following are equivalent:
\begin{enumerate}[label=\textup{(\arabic*)}]
\item $\Theta_1(\vv{\mu}) = \vv{\rho}$.
\item If $w=\Psi(\Tc_\lambda, \Tc_\lambda, \rev_\lambda(\vv{\rho}))$ then $w$ and $w_{\vv{\mu}}$ are in the same double $\Sym_n$-coset.
\item If $w=\Psi(\Tc_\lambda, \Tc_\lambda, \rev_\lambda(\vv{\rho}))$ then $\{w(1), w(2), \ldots, w(n)\}=\{n\mu_1+1, n\mu_2+2, \ldots, n\mu_n+n\}.$
\item Let $w=\Psi(\Tc_\lambda, \Tc_\lambda, \rev_\lambda(\vv{\rho}))$ and $R \subset \cB_w$ be a set of representatives in each translation class by $(n,n)$. (For example we may take $R=\{(x, w(x))\mid x \in [1,n]\}$.) Then $\{D(x) \mid x \in R\}=\vv{\mu}$ as multisets, where $D(x)$ is the block diagonal defined in \ref{sec:phipsi}.
\end{enumerate}
\end{lem}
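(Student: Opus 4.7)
The plan is to unravel the chain of bijections defining $\Theta_1$ together with the double $\Sym_n$-coset structure of $\extSn$, and then to analyze the window of the minimal-length representative in each coset. The conceptual core is the following claim, which I will establish first: for $\vv{\mu} \in \Dom(GL_n)$, the unique minimal-length element $w$ in the double coset $\Sym_n w_{\vv{\mu}} \Sym_n$ belongs to ${}_f\extSn_f$ and has window $\{w(1),\ldots,w(n)\} = \{n\mu_i+i : i \in [1,n]\}$ as a set. Once this is in hand, all four equivalences follow quickly.

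For (1) $\Leftrightarrow$ (2), I trace through the composition $\Dom(GL_n) \simeq \Sym_n\backslash \extSn/\Sym_n \simeq \bigsqcup_{\lambda \vdash n} (\lcc_\lambda)^{-1} \cap \lcc_\lambda \xrightarrow{\tilde\Theta_1} \bigsqcup_{\lambda \vdash n} \Dom(F_\lambda)$, using Theorem \ref{thm:matisom} to identify $\tilde\Theta_1$ on $(\lcc_\lambda)^{-1}\cap\lcc_\lambda$ with the map $w \mapsto \rev_\lambda(\vv{\rho}(w))$ (recall $\offset_{\Tc_\lambda,\Tc_\lambda}=0$). Because $\rev_\lambda$ is an involution, $\Theta_1(\vv{\mu}) = \vv{\rho}$ is equivalent to: the minimal-length representative $w$ of $[w_{\vv{\mu}}]$ satisfies $\Phi(w) = (\Tc_\lambda,\Tc_\lambda, \rev_\lambda(\vv{\rho}))$, i.e., $w = \Psi(\Tc_\lambda,\Tc_\lambda, \rev_\lambda(\vv{\rho}))$. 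The identification $\bigsqcup_\lambda (\lcc_\lambda)^{-1}\cap\lcc_\lambda = {}_f\extSn_f$ from Section \ref{sec:cancells} shows this is the same $w$ as in (2), so the condition reduces to $w \in \Sym_n w_{\vv{\mu}} \Sym_n$, which is (2).

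The key claim I prove in three steps. First, writing each $u \in \extSn$ as $u(i) = n q_i(u) + r_i(u)$ with $r_i(u) \in [1,n]$, a direct check shows that left multiplication by $\sigma \in \Sym_n$ sends $(q_i, r_i)$ to $(q_i, \sigma(r_i))$ while right multiplication by $\tau \in \Sym_n$ permutes positions; hence the multiset $\{q_1(u),\ldots,q_n(u)\}$ is a double-coset invariant, equal to $\{\mu_i\}$ for the coset of $w_{\vv{\mu}}$. Second, let $w$ be the element whose window is the increasing sort of $\{n\mu_i+i\}$, extended periodically; this $w$ is obtained from $w_{\vv{\mu}}$ by a single right $\Sym_n$-multiplication, so lies in the same coset, and clearly satisfies $w \in \extSn_f$. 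Third, I verify $w^{-1} \in \extSn_f$ by writing $w(i) = n\mu_{\alpha(i)} + \alpha(i)$, where $\alpha : [1,n] \to [1,n]$ is the bijection sending $i$ to the residue of $w(i)$ modulo $n$; then $w^{-1}(j) = \alpha^{-1}(j) - n\mu_j$, and strict increase of $w^{-1}$ reduces to $\alpha^{-1}(j+1) - \alpha^{-1}(j) > n(\mu_{j+1}-\mu_j)$. When $\mu_{j+1} < \mu_j$ this is automatic since the left side is at least $1-n > -n \geq n(\mu_{j+1}-\mu_j)$; when $\mu_{j+1} = \mu_j = c$ it follows because within a block of constant $\mu = c$ the sorted values $nc+j$ pair smaller residues $j$ with smaller sorted positions, forcing $\alpha^{-1}$ to be strictly increasing on the block.

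Granted the key claim, (2) $\Leftrightarrow$ (3) is immediate from uniqueness of the minimal-length element of each double coset; (3) $\Rightarrow$ (4) uses the identity $\lceil (n\mu_i+i)/n\rceil - 1 = \mu_i$ for $i \in [1,n]$; and (4) $\Rightarrow$ (2) holds because $\{D(x,w(x)) : x \in R\}$ is exactly $\{q_i(w)\}$, which by the first step is the double-coset invariant $\{\mu_i\}$. I expect the main obstacle to be the third step of the key claim — checking that the sorted ordering of $\{n\mu_i+i\}$ automatically forces $\alpha^{-1}$ to be increasing on each constant-$\mu$ block — as this is the only place where dominance of $\vv{\mu}$ enters in a nontrivial way.
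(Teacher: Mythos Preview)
Your proof is correct and follows essentially the same logical skeleton as the paper: $(1)\Leftrightarrow(2)$ by unwinding the definition of $\Theta_1$, $(2)\Leftrightarrow(3)$ via minimal double-coset representatives, $(3)\Rightarrow(4)$ by the computation $\lceil (n\mu_i+i)/n\rceil-1=\mu_i$, and $(4)\Rightarrow(2)$ via the block-diagonal multiset as a complete double-coset invariant.

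The one place you diverge is in establishing $(2)\Rightarrow(3)$. The paper argues more economically: since $w=\Psi(\Tc_\lambda,\Tc_\lambda,\rev_\lambda(\vv\rho))$ has $P(w)=Q(w)=\Tc_\lambda$, it lies in ${}_f\extSn_f$ automatically and is therefore the minimal element of its double coset; separately, dominance of $\vv\mu$ gives $w_{\vv\mu}^{-1}(j)=j-n\mu_j$ increasing on $[1,n]$, so $w_{\vv\mu}$ is minimal in $\Sym_n w_{\vv\mu}$; then the standard Coxeter fact about minimal double-coset representatives yields $w_{\vv\mu}\in w\Sym_n$, hence equal windows as sets. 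You instead construct the sorted element directly and verify by hand (your steps 2 and 3) that it lies in ${}_f\extSn_f$. Both arguments are short; yours is more self-contained, the paper's leans on a well-known structural fact and avoids the block-by-block case analysis of $\alpha^{-1}$. One small point: in your $(4)\Rightarrow(2)$ you use that $\{q_i\}$ is a \emph{complete} invariant of the double coset, not merely an invariant; this follows from the bijection $\Dom(GL_n)\simeq \Sym_n\backslash\extSn/\Sym_n$ you already cited, but is worth stating explicitly.
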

\begin{proof} (1) and (2) are equivalent by the definition of $\Theta_1$. Now if (3) holds, then since $w_{\vv{\mu}} = [n\mu_1+1, n\mu_2+2, \ldots, n\mu_n+n]$, there exists $\sigma \in \Sym_n$ such that $w_{\vv{\mu}}\sigma =w$. Thus (2) holds. Now let us assume (2). Then $w\in (\lcc_\lambda)^{-1}\cap \lcc_\lambda$, thus it is of the minimal length in its double $\Sym_n$-coset. On the other hand, it is easy to show that $w_{\vv{\mu}}$ is of the minimal length in its right $\Sym_n$-coset. Thus there exists $\sigma \in \Sym_n$ such that $w_{\vv{\mu}}\sigma =w$ which implies that $\{w(1), w(2), \ldots, w(n)\} = \{w_{\vv{\mu}}(1), w_{\vv{\mu}}(2), \ldots, w_{\vv{\mu}}(n)\}$. As $w_{\vv{\mu}}= [n\mu_1+1, n\mu_2+2, \ldots, n\mu_n+n]$, (3) holds.

If we assume (3), then (4) clearly holds. Now we assume (4).  Let $\vv{\mu}' =(\mu'_1, \mu'_2, \ldots, \mu'_n) \in \Dom(GL_n)$ be such that $w$ and $w_{\vv{\mu}'}$ are in the same double $\Sym_n$-coset, which always exists by the bijection $\Dom(GL_n) \simeq \Sym_n \backslash \extSn / \Sym_n \simeq  \bigsqcup_{\lambda \vdash n} (\lcc_\lambda)^{-1}\cap \lcc_\lambda$. As (2) implies (3), we have $\{w(1), w(2), \ldots, w(n)\} = \{n\mu'_1+1, n\mu'_2+2,\ldots, n\mu'_n+n\}$.  But then $\{D(i, w(i)) \mid i \in [1,n]\}=\vv{\mu}'$ as multisets, thus $\vv{\mu}=\vv{\mu}'$ by assumption and thus (3) holds.
\end{proof}

Therefore, in order to prove Theorem \ref{thm:lvbij}, it suffices to find a set of representative $R\subset \cB_w$ in each translation class by $(n,n)$ such that 
$$\{D(x) \mid x\in R\} =\left\{\begin{aligned} 
&\vv{\mu}_\lambda(r,s) &&\textup{ when }w=\Psi(\Tc_\lambda, \Tc_\lambda, \rev_\lambda(\vv{\rho}_\lambda(r,s))),
\\&\vv{\mu}_\lambda(0) &&\textup{ when }w=\Psi(\Tc_\lambda, \Tc_\lambda, 0).
\end{aligned}\right.$$

From now on we proceed by induction on the number of rows in $\lambda$. We first consider one-row case, i.e. $\lambda=(n)$. For $s \geq 0$, there exist $a, b\in \bN$ such that $0\leq b <n$ and $s=an+b$. Then $\vv{\mu}_{\lambda}(n,s)$ (when $s>0$) consists of $b$ numbers of $(a+1)$'s and $(n-b)$ numbers of $a$'s. Thus by Lemma \ref{lem:blockdiag} it suffices to show that
\begin{align*}
\{w(1), w(2), \ldots, w(n)\} &= \{(a+1)n+i \mid 1\leq i \leq b\} \sqcup \{an+i \mid b<i\leq n\}
\\&= \{s+1, s+2, \ldots, s+n\}
\end{align*}
where $w=\Psi(\Tc_\lambda, \Tc_\lambda, (s))$. But this is obvious because $w=\shift^s$ in this case.

Now suppose that we are given $\lambda \vdash n$ such that $l(\lambda)\geq 2$ and assume that Theorem \ref{thm:lvbij} is true for partitions of $<l(\lambda)$ parts. We set $w=\Psi(\Tc_\lambda, \Tc_\lambda, \rev_\lambda(\vv{\rho}))$ for $\vv{\rho}=\vv{\rho}_\lambda(r,s)$ or $\vv{\rho}=0$ and let $\vv{\mu} \in \Dom(GL_n)$ such that $\Theta_1(\vv{\mu})=\vv{\rho}$. Also we let $n' \colonequals n-\lambda_1$, $\tilde{\lambda} \colonequals (\lambda_2, \lambda_3, \ldots) \vdash n'$ and $w' \colonequals \fw(w)$. Then there exists $\vv{\rho}' \in \Dom(F_{\tilde{\lambda}})$ such that $w' = \Psi(\Tc_{\tilde{\lambda}}, \Tc_{\tilde{\lambda}}, \vv{\rho}')$ as a partial permutation. Indeed, $\vv{\rho}'$ is defined such that $\rev_{\tilde{\lambda}}(\vv{\rho}')$ is obtained by removing the first entry from $\rev_{\lambda}(\vv{\rho})$. Thus in particular we have
$$\vv{\rho}' = \left\{
\begin{aligned}
&\vv{\rho}_{\tilde{\lambda}}(r,s) &&\textup{ if } \vv{\rho}=\vv{\rho}_\lambda(r,s) \textup{ where } r \neq \lambda_1 \textup{ or } r=\lambda_1=\lambda_2,
\\&0 &&\textup{ otherwise}.
\end{aligned}
\right.$$
Let $\vv{\mu}' \in \Dom(GL_{n'})$ such that $\Theta_1(\vv{\mu}') = \vv{\rho}'$. Then by induction assumption, $\vv{\mu}'=\vv{\mu}_{\tilde{\lambda}}(0)$ if $\vv{\rho}'=0$ and $\vv{\mu}'=\vv{\mu}_{\tilde{\lambda}}(r,s)$ if $\vv{\rho}'=\vv{\rho}_{\tilde{\lambda}}(r,s)$.
Also, for some $r>0$ we have
$$\st(w) = \left\{
\begin{aligned}
&\st_s(\{n'+1,n'+2, \ldots, n\}, \{n'+1,n'+2, \ldots, n\}) &&\textup{ if } \vv{\rho}=\vv{\rho}_\lambda(\lambda_1,s) \textup{ and } \lambda_1>\lambda_2,
\\&\st_0(\{n'+1,n'+2, \ldots, n\},\{n'+1,n'+2, \ldots, n\}) &&\textup{ otherwise}.
\end{aligned}
\right.$$
(See \ref{sec:phipsi} for the definition of $\st(w)$. Also, $\st_a(A, B)$ is a stream of altitude $a$ yielding a bijection from $A+n\bZ$ to $B+n\bZ$. See \cite[Section 3.4]{cpy18} for more details.)

\noindent{\bfseries{Case 1.}}
Let us first consider the case when $r=\lambda_1>\lambda_2$, $\st(w) = \st_s(\{n'+1,n'+2, \ldots, n\},\{n'+1,n'+2, \ldots, n\})$ for some $s\geq 0$, and $\vv{\rho}'=0$.
We know that Theorem \ref{thm:lvbij} holds when $\vv{\rho}=0$ since $\Theta_1^{-1}(0) = \Theta_2^{-1}(0)$; see Lemma \ref{lem:Jringgen} and the following remark. We prove the statement by induction on $s$, increasing it by $1$. 

Consider the tableau $\mathfrak{W}_\lambda(\lambda_1,s)$ and define its {\it {$\mathfrak{W}$-channel}} as follows. It is a subset of boxes such that (1) its intersection with each column has exactly one box; (2) there exists $k\in \bZ$ such that the content of each box in it is either $k$ or $k+1$; (3) the boxes in the right columns are not lower than the boxes in the left columns.

\begin{example}
Two $\mathfrak{W}$-channels of the tableau $\mathfrak{W}_{(4,3,2,2,1)}(0)$ are shown below:
\[
\ytableaushort{
43{*(white!80!black)1}{*(white!80!black)0},2{*(white!80!black)1}{-1},{*(white!80!black)0}{-1},{-2}{-3},{-4}
}\quad \text{   and   }\quad 
\ytableaushort{
431{*(white!80!black)0},21{*(white!80!black)-1},{*(white!80!black)0}{*(white!80!black)-1},{-2}{-3},{-4}
}
\]
\end{example}


\begin{lem}
Each $\mathfrak{W}_\lambda(\lambda_1,s)$ has either one or two $\mathfrak{W}$-channels.
\end{lem}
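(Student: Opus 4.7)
I would prove this in three steps, corresponding to reducing to at most two values of the witness $k$, showing uniqueness of the channel for each $k$, and establishing existence by a swap argument against the $\sum b_i^2$-minimality defining the $b_i$.

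Under the Case 1 assumption $\lambda_1>\lambda_2$, we have $\lambda'_{\lambda_1}=m_{\lambda_1}=1$, so the rightmost column of $\mathfrak{W}_\lambda(\lambda_1,s)$ is the single box with entry $b_{\lambda_1}$. Every $\mathfrak{W}$-channel must include this box, and condition (2) then forces $b_{\lambda_1}\in\{k,k+1\}$; hence $k\in\{b_{\lambda_1}-1,b_{\lambda_1}\}$. For each such $k$ there is at most one channel, by the following column-by-column argument. The top entry of column $i$ is $b_i\ge a_i=\lambda'_i-1$, while every entry below is at most $\lambda'_i-3$, so the top and any lower entry differ by at least $2$ and cannot both lie in $\{k,k+1\}$; moreover, the lower entries decrease by $2$ down the column, so parity leaves at most one of them in $\{k,k+1\}$. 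Condition (3) is automatic: because $b_i$ is weakly decreasing, the columns using the top row form a right suffix (with $j_i=1$), while on the complementary prefix the row index equals $\lceil(\lambda'_i-k)/2\rceil$, weakly decreasing in $i$ since $\lambda'_i$ is, and across the boundary the index drops from $\ge 2$ down to $1$.

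To establish existence for $k=b_{\lambda_1}$, the only nontrivial case is a column $i$ with $b_i\ge k+2$, for which I must produce a lower row carrying a value in $\{k,k+1\}$. I claim $\lambda'_i\ge k+3$, which suffices. To see this, pick the largest index $i^*$ with $b_{i^*}=b_i$ and the smallest index $j^*$ with $b_{j^*}=b_{\lambda_1}$, and consider replacing $b_{i^*}$ by $b_{i^*}-1$ and $b_{j^*}$ by $b_{j^*}+1$. The choices of $i^*$ and $j^*$ ensure that the modified sequence is still weakly decreasing, and the inequality $b_{j^*}+1\ge a_{j^*}$ is automatic because $a_{j^*}\le b_{\lambda_1}+1$ follows from $b_{j^*}\ge a_{j^*}$. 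The swap changes $\sum b_\bullet^2$ by $-2(b_i-k-1)\le -2$, so minimality is contradicted unless $b_{i^*}-1<a_{i^*}$, which forces $a_{i^*}=b_i$ and hence $\lambda'_i\ge\lambda'_{i^*}=b_i+1\ge k+3$. Combining the three steps, $\mathfrak{W}_\lambda(\lambda_1,s)$ has at least one and at most two $\mathfrak{W}$-channels.

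I expect the swap in the last step to be the main obstacle; it is the only place where the definition of $b_i$ via minimization of $\sum b_i^2$ really enters, and one must verify carefully that all constraints (the lower bounds $b_\bullet\ge a_\bullet$, the weakly decreasing order, and preservation of the sum) remain satisfied after the swap.
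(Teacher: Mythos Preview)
Your proof is correct and follows the same three–step skeleton as the paper: pin down at most two admissible values of $k$ via a size–one column, show each column contributes at most one box to $\{k,k+1\}$, and use a swap against the $\sum b_i^2$–minimality to force the needed entry to exist.

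The existence step, however, is organized differently and is in fact cleaner than the paper's. The paper anchors at the \emph{leftmost} size-one column and runs a right-to-left scan with a case split: depending on what appears below row~$1$ in the next column, it argues that either the $\{k,k+1\}$ channel or the $\{k-1,k\}$ channel can be completed, invoking two separate swaps along the way. You anchor at the \emph{rightmost} column and prove directly that the single value $k=b_{\lambda_1}$ works, via one swap (decrement $b_{i^*}$ at the rightmost occurrence of the large value, increment $b_{j^*}$ at the leftmost occurrence of $k$) that immediately yields $\lambda'_i\ge k+3$ whenever $b_i\ge k+2$. This avoids the scan and the case analysis entirely. One small point you use implicitly but do not state: the lower bound $1-\lambda'_i\le k$ needed to place $k$ or $k+1$ among the sub-row entries relies on $k=b_{\lambda_1}\ge a_{\lambda_1}=0$; this is immediate from the constraints on $b$ but worth recording. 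Also, your verification of condition~(3) is really part of the existence argument (it shows the forced selection is a channel), not of uniqueness per~$k$; uniqueness already follows from ``at most one box per column.''
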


\begin{proof}
Let $k$ be the filling of the single box in the leftmost column among those of size $1$ (which is possible as we assume that $\lambda_1>\lambda_2$). By definition of $\mathfrak{W}_\lambda(\lambda_1,s)$ we know $k$ is non-negative, since $b_i \geq a_i =0$ if $i$ is the index of the column of size 1. It is clear that any $\mathfrak{W}$-channel must contain this $k$, and therefore it can only be filled with either $k$-s and $k+1$-s, or with $k$ and $k-1$. Furthermore, for each of those two choices there is at most one $\mathfrak{W}$-channel filled with them, this is because no two entries in the same column of $\mathfrak{W}_\lambda(\lambda_1,s)$ are consecutive integers. What remains to argue is that at least one of those two $\mathfrak{W}$-channels does exist. 

Assume that $i$ is the smallest column index such that $b_i=k$. First we suppose that we can find one of the entries $k-1, k, k+1$ in the column $i-1$ below the first row. If it is $k-1$, then we can find either $k$ or $k-1$ in each column to the left of it. If it is $k$ or $k+1$, we can find either $k$ or $k+1$ in each column to the left of it. Assume now that all entries below the first row in column $i-1$ are at most $k-2$. This means that $a_{i-1} \leq k$. If we had $b_{i-1} \geq k+2$, we could reduce $b_{i-1}$ by $1$ and increase $b_i$ by $1$ while decreasing the sum of squares of $b$-s. Thus $b_{i-1}=k+1$. Let $j$ be the smallest column index such that $b_j=k+1$. Then if we can find an entry equal to $k$ or $k+1$ below the first row in column $j-1$, we can proceed to find such entry in each next column to the left. If not, we know that $a_{j-1} \leq k+1$. On the other hand, by assumption $b_{j-1} \geq k+2$. Reducing $b_{j-1}$ by $1$ and increasing $b_i$ by $1$ decreases the sum of squares of $b$'s, which is a contradiction. This completes the proof. 
\end{proof}


Let $w = \Psi(\Tc_\lambda, \Tc_\lambda, \rev_\lambda(\vv{\rho}_\lambda(\lambda_1,s)))$
and assume that the entries of $\mathfrak{W}_\lambda(\lambda_1,s)$ coincide with the multiset $\{D(x,w(x)) \mid x \in [1,n]\}$. (This holds for $\Psi(\Tc_\lambda, \Tc_\lambda, 0)$ as observed in Lemma \ref{lem:blockdiag}, which is the base case for our induction step.)

\begin{lem} Suppose the situation above. Then there is an one-to-one correspondence between channels of $w$ and $\mathfrak{W}$-channels of $\mathfrak{W}_\lambda(\lambda_1,s)$ so that the multiset $\{D(b) \mid b\in C\cap [1,n]\times \bZ\}$ is equal to the entries of the $\mathfrak{W}$-channel corresponding to the channel $C$.
\end{lem}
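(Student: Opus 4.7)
The plan is to parameterize both channels of $w$ and $\mathfrak{W}$-channels of $\mathfrak{W}_\lambda(\lambda_1,s)$ by the same invariant, a starting block-diagonal value $k$, and establish the bijection by matching the multiset of diagonals of a channel with the multiset of entries of an $\mathfrak{W}$-channel.

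First I would analyze any channel $C$ of $w$. Since $w = \Psi(\Tc_\lambda, \Tc_\lambda, \rev_\lambda(\vv{\rho}))$ belongs to $(\lcc_\lambda)^{-1} \cap \lcc_\lambda$, we have $R(w) \subseteq \{\ol{n}\}$, so the window $(w(1), \ldots, w(n))$ is strictly increasing. Hence $C \cap ([1,n] \times \bZ)$ consists of $\lambda_1$ balls $(x_i, w(x_i))$ with $x_1 < \cdots < x_{\lambda_1}$ and automatically $w(x_1) < \cdots < w(x_{\lambda_1})$. The stream periodicity condition $w(x_{\lambda_1}) < w(x_1) + n$ forces the block diagonals $d_i = \lceil w(x_i)/n \rceil - 1$ to lie in $\{k,k+1\}$ for $k = d_1$, and to be weakly increasing in $i$. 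These are exactly the conditions defining entries of an $\mathfrak{W}$-channel, so assigning $d_i$ to the $i$-th column of $\mathfrak{W}_\lambda(\lambda_1,s)$ produces a candidate $\mathfrak{W}$-channel.

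Next I would verify that this assignment is well-defined and bijective. Well-definedness requires $d_i$ to be a legal entry of the $i$-th column of $\mathfrak{W}_\lambda(\lambda_1,s)$ (matching parity and range), which follows from the multiset hypothesis $\{D(x, w(x)) : x \in [1,n]\} = \{\text{entries of } \mathfrak{W}_\lambda(\lambda_1,s)\}$ together with the explicit column structure of $\mathfrak{W}_\lambda(\lambda_1,s)$. For bijectivity, I would use the classification from the previous lemma that $\mathfrak{W}_\lambda(\lambda_1,s)$ has one or two $\mathfrak{W}$-channels distinguished by the starting value $k$, and analogously show that $w$ has at most two channels, exhibiting them explicitly (the southwest channel realizing the smaller $k$, and, when the second $\mathfrak{W}$-channel exists, a second channel of $w$ obtained by shifting $k$ up by one) whose multisets realize the two $\mathfrak{W}$-channels.

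The main obstacle will be surjectivity: for each valid $k$, producing an actual channel of $w$ whose balls have block diagonals matching the corresponding $\mathfrak{W}$-channel entries. This requires unwinding the backward AMBC defining $w = \Psi(\Tc_\lambda, \Tc_\lambda, \rev_\lambda(\vv{\rho}_\lambda(\lambda_1,s)))$: the top-row stream has altitude $s$ distributed among its $\lambda_1$ balls, and one must verify that this distribution agrees with the minimization condition defining the first-row entries $b_i$ of $\mathfrak{W}_\lambda(\lambda_1,s)$. Combined with the rigid column structure of $w$ coming from membership in the double coset $(\lcc_\lambda)^{-1} \cap \lcc_\lambda$, this pins down the channel realizing each $\mathfrak{W}$-channel and completes the bijection.
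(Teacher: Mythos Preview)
Your setup is right: parameterizing both kinds of channels by the value $k$ such that all block diagonals lie in $\{k,k+1\}$ is exactly what the paper does. But from there you overcomplicate matters and miss the one-line observation that makes the correspondence immediate.

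The paper does not try to assign the $i$-th ball of a channel to the $i$-th column of $\mathfrak{W}_\lambda(\lambda_1,s)$, nor does it unwind the backward AMBC. Instead it notes that since $w\in(\lcc_\lambda)^{-1}\cap\lcc_\lambda$, both $w(1)<\cdots<w(n)$ \emph{and} $w^{-1}(1)<\cdots<w^{-1}(n)$ hold (you only invoke the first). Together these imply that for \emph{every} $k\in\bZ$, the set $\{(x,w(x)):D(x,w(x))\in\{k,k+1\}\}$ is automatically a stream of $w$. Hence it is a channel if and only if it has density $\lambda_1$, i.e.\ if and only if the values $k$ and $k+1$ together occur exactly $\lambda_1$ times in $\mathfrak{W}_\lambda(\lambda_1,s)$ (using the standing hypothesis that the multiset of block diagonals equals the multiset of tableau entries). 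Since no column of $\mathfrak{W}_\lambda(\lambda_1,s)$ contains two consecutive integers, this happens precisely when each of the $\lambda_1$ columns contains exactly one of $k,k+1$, which is the definition of a $\mathfrak{W}$-channel with entries in $\{k,k+1\}$. That is the entire bijection.

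So your ``main obstacle'' of surjectivity dissolves once you use both monotonicity conditions: you do not need to construct channels by hand or verify any minimization, because the lemma's hypothesis already hands you the multiset equality, and the doubly-increasing property of $w$ guarantees that any level set $\{D(b)\in\{k,k+1\}\}$ is a stream.
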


\begin{proof}
Throughout the proof we are using the fact that $w$ lies in both left and right canonical cells, i.e. $w(1)<w(2)<\cdots<w(n)$ and $w^{-1}(1)<w^{-1}(2)<\cdots <w^{-1}(n)$.
Then from the condition it easily follows that if $C$ is a stream of $w$ then there exists $k \in \bZ$ such that $D(b) \in \{k, k+1\}$ for any $b \in C$. Moreover, for any $k \in \bZ$ the set $\{(x, w(x)) \mid D(x, w(x)) \in \{k, k+1\}\}$ is always a stream of $w$ by similar reason. Thus it is indeed a channel of $w$ if and only if $k$ and $k+1$ together appear in $\mathfrak{W}_\lambda(\lambda_1,s)$ exactly $\lambda_1$ times. This is equivalent to the existence of a $\mathfrak{W}$-channel of entries in $\{k, k+1\}$. Thus the result follows.
\end{proof}
We proceed with the step of induction. Let us increase $s$ by $1$. By Theorem \cite[Theorem 16.9]{cpy18} this results in a shift of size $1$ of the most northeast channel of the indexing river. (See \cite[Definition 13.9]{cpy18} for the definition of the shift of a stream, \cite[Definition 3.18]{cpy18} for a river, and \cite[Definition 16.6]{cpy18} for an indexing river.) In our situation, since $\lambda_1>\lambda_2$ we only have one river of $w$ and thus it is equal to the set of all channels of $w$. When there is only one channel, then it will automatically the most northeast channel and we set $k\in \bZ$ so that the set of entries of the corresponding $\mathfrak{W}$-channel are either $\{k\}$ or $\{k, k+1\}$. If there are two of them, the most northeast one corresponds to the $\mathfrak{W}$-channel of entries in $\{k,k+1\}$ if the other one corresponds to that of entries $\{k,k-1\}$ for some $k \in \bZ$.
\begin{rmk}
 Theorem \cite[Theorem 16.9]{cpy18} is stated in the generality when the streams have the same flow as the width of the Shi poset. This is equivalent to $\lambda_1 = \lambda_2$. We are interested in the case when $\lambda_1 > \lambda_2$. The proof of all statement in \cite[Section 16.1]{cpy18} however extends verbatim to this generality as well. 
\end{rmk}
It is easy to see that the multiset of block diagonals $D(x)$ as we vary $x$ over equivalence classes in the most northeast channel changes so that we get one less of $D(x)=k$ and one more of $D(x)=k+1$. Note that by construction the smallest entry in the first row belongs to all (i.e. one or two) $\mathfrak{W}$-channels. Thus, it follows that $k$ is indeed the smallest entry in the first row.

We now argue that the same change happens to the total content of $\mathfrak{W}_\lambda(\lambda_1,s)$ as we increase $s$ by $1$, i.e.  $\mathfrak{W}_\lambda(\lambda_1,s+1)$ is obtained from $\mathfrak{W}_\lambda(\lambda_1,s)$ by increasing the smallest entry of the first row by $1$. Indeed, keeping in mind that we are trying to minimize $\sum b_i^2$ conditioned on knowing $\sum b_i$ and the fact that $b_i \geq a_i \geq 0$, it is not hard to check that first, one is indeed related to the other by a single increase of an entry by $1$, and second, it has to be the smallest entry. This completes the step of induction.

\begin{example}
 The following illustrates how the $\mathfrak{W}$-channels change as we start with $s=0$ and increase it till $s = 5$ for $\mathfrak{W}_{(4,3,3,3,1)}(4,s)$. In each step, the union of all boxes belonging to some $\mathfrak{W}$-channel is described as follows.
\begin{gather*}
\begin{ytableau}
4&3&3&*(white!80!black)0\\
2&*(white!80!black)1&*(white!80!black)1\\
*(white!80!black)0&-1&-1\\
-2&-3&-3\\
-4
\end{ytableau}\ \ \Rightarrow \ \ 
\begin{ytableau}
4&3&3&*(white!80!black)1\\
*(white!80!black)2&*(white!80!black)1&*(white!80!black)1\\
*(white!80!black)0&-1&-1\\
-2&-3&-3\\
-4
\end{ytableau}\ \ \Rightarrow \ \ 
\begin{ytableau}
4&*(white!80!black)3&*(white!80!black)3&*(white!80!black)2\\
*(white!80!black)2&*(white!80!black)1&*(white!80!black)1\\
0&-1&-1\\
-2&-3&-3\\
-4
\end{ytableau}
\\\ \ \Rightarrow \ \ 
\begin{ytableau}
*(white!80!black)4&*(white!80!black)3&*(white!80!black)3&*(white!80!black)3\\
*(white!80!black)2&1&1\\
0&-1&-1\\
-2&-3&-3\\
-4
\end{ytableau}\ \ \Rightarrow \ \ 
\begin{ytableau}
*(white!80!black)4&*(white!80!black)4&*(white!80!black)3&*(white!80!black)3\\
2&1&1\\
0&-1&-1\\
-2&-3&-3\\
-4
\end{ytableau}\ \ \Rightarrow \ \ 
\begin{ytableau}
*(white!80!black)4&*(white!80!black)4&*(white!80!black)4&*(white!80!black)3\\
2&1&1\\
0&-1&-1\\
-2&-3&-3\\
-4
\end{ytableau}
\end{gather*}
\end{example}

\noindent{\bfseries{Case 2.}}
It remains to consider the case when $\st(w) = \st_0(\{n'+1,n'+2, \ldots, n\},\{n'+1,n'+2, \ldots, n\})$ and $\vv{\rho}'=\vv{\rho}_{\tilde{\lambda}}(r,s) $ for some $r$ and $s$.
Here we define some ad-hoc notations; for an integer vector $\vv{\nu}$ and $i \in \bZ$, we define $\vv{\nu}^{(i)}$ to be the number of $i$ in $\vv{\nu}$. Also let $\vv{\nu}^{(\geq i)} = \sum_{j\geq i} \vv{\nu}^{(j)}$ and define $\vv{\nu}^{(>i)}, \vv{\nu}^{(\leq i)}$, and $\vv{\nu}^{(<i)}$ similarly. We start with the following lemma.

\begin{lem} \label{lem:mu} Let $\vv{\mu}_{\tilde{\lambda}}(0), \vv{\mu}_{\tilde{\lambda}}(r,s)$ be as before. Here $\vv{\mu}'$ can be any of $\vv{\mu}_{\tilde{\lambda}}(0)$ or $\vv{\mu}_{\tilde{\lambda}}(r,s)$.
\begin{enumerate}[label=\textup{(\arabic*)}]
\item For $k \in \bZ$, we have $\vv{\mu}'^{(k)}+\vv{\mu}'^{(k+1)} \leq \lambda_2$.
\item For $k \in \bZ$, we have $\vv{\mu}_{\tilde{\lambda}}(0)^{(k)}=\vv{\mu}_{\tilde{\lambda}}(0)^{(-k)}$.
\item For $k <0$, we have $\vv{\mu}_{\tilde{\lambda}}(r,s)^{(k)}=\vv{\mu}_{\tilde{\lambda}}(0)^{(k)}$.
\item $\vv{\mu}_{\tilde{\lambda}}(r,s)^{(\geq 0)} = \vv{\mu}_{\tilde{\lambda}}(0)^{(\geq 0)}$.
\item $\vv{\mu}_{\tilde{\lambda}}(r,s)^{(0)}\leq \vv{\mu}_{\tilde{\lambda}}(0)^{(0)}$.
\item $\vv{\mu}'^{(>0)}\geq \vv{\mu}'^{(<0)}$ or equivalently $\vv{\mu}'^{(\geq0)}\geq \vv{\mu}'^{(\leq0)}$.
\item For $k>0$, we have $\vv{\mu}'^{(\geq 0)}-\vv{\mu}'^{(<-k)}\leq k\lambda_2$.
\end{enumerate}
\end{lem}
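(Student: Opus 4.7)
The plan is to read off all seven claims directly from the column structure of the tableaux $\mathfrak{W}_{\tilde{\lambda}}(0)$ and $\mathfrak{W}_{\tilde{\lambda}}(r,s)$, whose entries (with multiplicity) form the multisets $\vv{\mu}_{\tilde{\lambda}}(0)$ and $\vv{\mu}_{\tilde{\lambda}}(r,s)$. Two elementary observations drive everything. In $\mathfrak{W}_{\tilde{\lambda}}(0)$ the $i$-th column has the $\tilde{\lambda}_i'$ entries $\tilde{\lambda}_i'-1, \tilde{\lambda}_i'-3, \dotsc, 1-\tilde{\lambda}_i'$, which share a common parity and are symmetric around $0$. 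In $\mathfrak{W}_{\tilde{\lambda}}(r,s)$ only the top entries $a_i$ in positions $(1,i)$ for $1 \leq i \leq r$ are modified, each being replaced by $b_i \geq a_i \geq 0$, while the remaining entries in that column, namely $a_i-2, a_i-4, \dotsc$, still share the parity of $a_i$ and are all $\leq a_i - 2$.

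For (1), I will check that each of the $\tilde{\lambda}_1 = \lambda_2$ columns contributes at most one to $\vv{\mu}'^{(k)} + \vv{\mu}'^{(k+1)}$. In an unmodified column all entries share a parity so only one of $k, k+1$ can appear; in a modified column, if one of the unaltered entries lies in $\{k, k+1\}$ then $b_i \geq a_i \geq k+3$ excludes $b_i$ from this set, while $b_i \in \{k, k+1\}$ together with $b_i \geq a_i$ forces $a_i \leq k+1$ and hence every other entry of the column is $\leq a_i - 2 \leq k - 1$. Part (2) is immediate from the symmetry around $0$ observed above. For (3), (4), and (5), note that every modification $a_i \mapsto b_i$ stays in $\bZ_{\geq 0}$, so no entry of value $<0$ is touched, the count of entries $\geq 0$ is preserved, and the count of zeros can only decrease.

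For (6), when $\vv{\mu}' = \vv{\mu}_{\tilde{\lambda}}(0)$ part (2) already gives the stronger equality $\vv{\mu}'^{(>0)} = \vv{\mu}'^{(<0)}$; for $\vv{\mu}' = \vv{\mu}_{\tilde{\lambda}}(r,s)$ I would chain (3), (4), (5), and the symmetric case to obtain
\[
\vv{\mu}'^{(>0)} = \vv{\mu}'^{(\geq 0)} - \vv{\mu}'^{(0)} \geq \vv{\mu}_{\tilde{\lambda}}(0)^{(\geq 0)} - \vv{\mu}_{\tilde{\lambda}}(0)^{(0)} = \vv{\mu}_{\tilde{\lambda}}(0)^{(<0)} = \vv{\mu}'^{(<0)}.
\]
For (7), (3) and (4) reduce the bound to the corresponding inequality for $\vv{\mu}_{\tilde{\lambda}}(0)$, and (2) rewrites its left-hand side as the number of entries of $\mathfrak{W}_{\tilde{\lambda}}(0)$ lying in $\{0, 1, \dotsc, k\}$; since each column of $\mathfrak{W}_{\tilde{\lambda}}(0)$ has its entries sharing a single parity, it contains at most $\lceil (k+1)/2 \rceil \leq k$ elements of $\{0, 1, \dotsc, k\}$ for $k \geq 1$, giving the bound $k\lambda_2$.

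There is no real obstacle here; the only slightly delicate point is the case analysis in (1), where one must use the monotonicity $b_i \geq a_i$ to rule out a modified column containing two distinct entries in $\{k, k+1\}$, and the verification that (7) collapses (via (2)--(4)) to a clean per-column parity count on $\mathfrak{W}_{\tilde{\lambda}}(0)$.
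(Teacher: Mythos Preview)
Your proof is correct and follows essentially the same approach as the paper's: both read off all seven claims directly from the column structure of $\mathfrak{W}_{\tilde{\lambda}}(0)$ and $\mathfrak{W}_{\tilde{\lambda}}(r,s)$, with (7) reduced via (2)--(4) to a count on $\mathfrak{W}_{\tilde{\lambda}}(0)$ (the paper then invokes (1), you do an equivalent per-column parity count). One harmless slip: in your case analysis for (1), when an unaltered entry equals $k$ you only get $a_i \geq k+2$ rather than $k+3$, but $b_i \geq a_i \geq k+2$ still suffices to exclude $b_i$ from $\{k,k+1\}$.
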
 
\begin{proof} (1) holds since each column of $\mathfrak{W}_{\tilde{\lambda}}(0)$ and $\mathfrak{W}_{\tilde{\lambda}}(r,s)$ does not contain any two consecutive integers. (2) is obvious. (3) is also straightforward since the negative entries of $\mathfrak{W}_{\tilde{\lambda}}(0)$ and $\mathfrak{W}_{\tilde{\lambda}}(r, s)$ are the same. (4) follows immediately from (3). (5) follows from the description of $\mathfrak{W}_{\tilde{\lambda}}(r,s)$. (6) holds because of (2), (4), and (5). For (7), by (3) and (4) it suffices to prove the inequality when $\vv{\mu}'=\vv{\mu}_{\tilde{\lambda}}(0)$. But $\vv{\mu}_{\tilde{\lambda}}(0)^{(\geq 0)}=\vv{\mu}_{\tilde{\lambda}}(0)^{(\leq 0)}$ by (2), thus the result follows from (1).
\end{proof}

Suppose that the window notation of $w'$ is given by
\begin{gather*}
[c_1, c_2, \ldots, c_{\vv{\mu}'^{(<0)}},b_{\vv{\mu}'^{(0)}}, \ldots,b_{2},b_{1}, a_{\vv{\mu}'^{(>0)}},  \ldots, a_2, a_1,\emptyset, \emptyset, \ldots, \emptyset].
\end{gather*}
In particular, we have
$$c_1<c_2<\cdots<c_{\vv{\mu}'^{(<0)}}<b_{\vv{\mu}'^{(0)}}<\cdots <b_{2}<b_{1}<a_{\vv{\mu}'^{(>0)}}<\cdots <a_{2}<a_{1}.$$
Furthermore, it is clear that
\begin{align*}
\vv{\mu}'=&\ \left(\ceil{\frac{a_1}{n}}-1, \ceil{\frac{a_2}{n}}-1, \ldots, \ceil{\frac{a_{\vv{\mu}'^{(>0)}}}{n}}-1, \ceil{\frac{b_1}{n}}-1, \ceil{\frac{b_2}{n}}-1, \ldots, \ceil{\frac{b_{\vv{\mu}'^{(0)}}}{n}}-1,\right.
\\& \left. \quad \ceil{\frac{c_{\vv{\mu}'^{(<0)}}}{n}}-1,\ldots,  \ceil{\frac{c_2}{n}}-1, \ceil{\frac{c_1}{n}}-1\right).
\end{align*}
It follows that $\ceil{\frac{a_i}{n}}\geq2,\ceil{\frac{b_i}{n}}=1$, and $\ceil{\frac{c_i}{n}}\leq0$ for any $a_i, b_i, c_i$. 
\begin{example} Let $n=11$, $\lambda=(4, 3, 3, 1)$, and $\vv{\rho}'=(0,2,0) = \vv{\rho}_{(3,3,1)}(3,2)$. Then
$$w'=[ -15, -6, -5, 4, 23,24, 25, \emptyset,\emptyset,\emptyset,\emptyset].$$
Thus we have
\begin{gather*}
a_1=25,\  a_2=24,\  a_3=23,\  b_1=4,\  c_3=-5,\  c_2=-6,\  c_1=-15,
\\ \vv{\mu}'=(2, 2, 2, 0, -1, -1, -2) = \vv{\mu}_{(3,3,1)}(3,2).
\end{gather*}
Indeed, $\mathfrak{W}_{(3,3,1)}(3,2)$ is given by
$$\tableau[sY]{2, 2, 2 \\ 0, -1, -1 \\ -2}.$$
\end{example}

The following lemma will turn out to be useful later on.

%
%

\begin{lem} \label{lem:ineq} Let $k\in \bZ_{>0}$.
\begin{enumerate}[label=\textup{(\arabic*)}]
\item For $1\leq i \leq \vv{\mu}'^{(>0)}$ and $k\in \bZ_{>0}$, $\ceil{\frac{a_i}{n}}-1 \leq k$ if and only if $i > \vv{\mu}'^{(>k)}$.
\item For $1\leq i \leq \vv{\mu}'^{(<0)}$ and $k\in \bZ_{>0}$, $1-\ceil{\frac{c_i}{n}} \leq k$ if and only if $i > \vv{\mu}'^{(< -k)}$.
\item For any  $1\leq i \leq j \leq \vv{\mu}'^{(<0)}$, we have $2\floor{\frac{j-i}{\lambda_2}} \leq \ceil{\frac{c_j}{n}}- \ceil{\frac{c_i}{n}}$.
\end{enumerate}
\end{lem}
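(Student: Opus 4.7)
My plan is to dispatch (1) and (2) by unwinding definitions, then prove (3) via a parity argument based on the column structure of $\mathfrak{W}_{\tilde{\lambda}}(0)$.

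For (1), I would note that since $\vv{\mu}'$ is dominant, the sequence $\ceil{a_1/n}-1 \geq \ceil{a_2/n}-1 \geq \cdots \geq \ceil{a_{\vv{\mu}'^{(>0)}}/n}-1$ is non-increasing and consists exactly of the positive entries of $\vv{\mu}'$. Hence the number of indices $i$ with $\ceil{a_i/n}-1 > k$ is precisely $\vv{\mu}'^{(>k)}$, which, since block diagonals are integers, is equivalent to $\ceil{a_i/n}-1 \leq k \Leftrightarrow i > \vv{\mu}'^{(>k)}$. Part (2) is entirely dual: the $c_i$ list the negative entries of $\vv{\mu}'$ in such a way that $\ceil{c_i/n}-1$ is non-decreasing in $i$, so the number of entries strictly smaller than $-k$ is $\vv{\mu}'^{(<-k)}$, giving the stated equivalence.

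For (3), I would first apply Lemma \ref{lem:mu}(3) to identify the multiset $\{\ceil{c_i/n}-1 : 1 \leq i \leq \vv{\mu}'^{(<0)}\}$ with the multiset of negative entries of $\mathfrak{W}_{\tilde{\lambda}}(0)$, since the negative parts of $\vv{\mu}_{\tilde{\lambda}}(0)$ and $\vv{\mu}_{\tilde{\lambda}}(r,s)$ agree. The key combinatorial fact is then: \emph{for any integer $v$, the number of cells of $\mathfrak{W}_{\tilde{\lambda}}(0)$ with entry in $\{v, v+1\}$ is at most $\lambda_2$.} Indeed, each column $j$ of $\mathfrak{W}_{\tilde{\lambda}}(0)$ contains the arithmetic progression $\tilde{\lambda}'_j-1, \tilde{\lambda}'_j-3, \ldots, 1-\tilde{\lambda}'_j$ of step $-2$, so any length-$2$ window $\{v, v+1\}$ meets column $j$ in at most one cell; summing over the $\tilde{\lambda}_1 = \lambda_2$ columns proves the bound.

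Writing $v_i = \ceil{c_i/n}-1$, this immediately yields the one-step estimate $v_{j+\lambda_2} \geq v_j + 2$ whenever $j+\lambda_2 \leq \vv{\mu}'^{(<0)}$: otherwise the non-decreasing sequence $v_j \leq v_{j+1} \leq \cdots \leq v_{j+\lambda_2}$ would consist of $\lambda_2+1$ values all lying in $\{v_j, v_j+1\}$, contradicting the cell-count bound. Iterating gives $v_{i+q\lambda_2} \geq v_i + 2q$ for all admissible $q$, and writing $j-i = q\lambda_2 + r$ with $0 \leq r < \lambda_2$ and using monotonicity of $v$ in the index gives $\ceil{c_j/n} - \ceil{c_i/n} = v_j - v_i \geq 2q = 2\lfloor (j-i)/\lambda_2 \rfloor$, which is precisely (3).

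The only real obstacle is identifying the correct combinatorial invariant; the factor of $2$ in (3) is exactly the step size of the column arithmetic progressions in $\mathfrak{W}_{\tilde{\lambda}}(0)$, and once this is isolated the inductive step is automatic. I do not expect any subtle issue with edge cases: when $v_j = -1$, the inequality $v_{j+\lambda_2} \geq v_j+2$ forces $j+\lambda_2 > \vv{\mu}'^{(<0)}$, so the step bound is only asserted where both endpoints are genuinely among the $c_i$, which suffices for the full estimate after iteration.
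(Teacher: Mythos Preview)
Your proof is correct and follows essentially the same approach as the paper. For (1) and (2) the paper simply says they ``follow from the description of $\vv{\mu}'$ above,'' which is exactly your unwinding; for (3) the paper reduces to the statement ``$j-i\geq k\lambda_2 \Rightarrow \ceil{c_j/n}-\ceil{c_i/n}\geq 2k$'' and invokes Lemma~\ref{lem:mu}(1), whereas you re-prove that lemma's content (your ``key combinatorial fact'' that any window $\{v,v+1\}$ meets at most $\lambda_2$ cells is precisely Lemma~\ref{lem:mu}(1)) and then iterate---so your detour through Lemma~\ref{lem:mu}(3) to reduce to $\mathfrak{W}_{\tilde{\lambda}}(0)$ is unnecessary, since Lemma~\ref{lem:mu}(1) already covers both $\vv{\mu}_{\tilde{\lambda}}(0)$ and $\vv{\mu}_{\tilde{\lambda}}(r,s)$.
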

\begin{proof} (1) and (2) follow from the description of $\vv{\mu}'$ above. (3) is equivalent to that if $j-i\geq k\lambda_2$ for some $k \in \bN$ then $\ceil{\frac{c_j}{n}}- \ceil{\frac{c_i}{n}} \geq 2k$. This is easily obtained from Lemma \ref{lem:mu}(1).
\end{proof}


%
%

%

We define (recall that $n'=n-\lambda_1$)
\begin{align*}
&A_i = (n'+1-i,a_i) &&\textup{ for } 1\leq i \leq \vv{\mu}'^{(>0)},
\\&B_i = (\vv{\mu}'^{(\leq 0)}+1-i,b_i)  &&\textup{ for } 1\leq i \leq \vv{\mu}'^{(0)},
\\&C_i = \left((1-\ceil{\frac{c_i}{n}})n+i,(1-\ceil{\frac{c_i}{n}})n+c_i\right) &&\textup{ for } 1\leq i \leq \vv{\mu}'^{(<0)}.
\end{align*}
Then it is immediate that
$$A_i, B_i, C_i \in \cB_{w'} \cap \left(\{(a,b) \in \bZ^2 \mid a\geq 1, b\geq 1\} - \{(a,b) \in \bZ^2 \mid a\geq n'+1, b\geq n'+1\}\right)$$
and that $ \{A_i \mid 1\leq i \leq \vv{\mu}'^{(>0)}\} \sqcup  \{B_i \mid 1\leq i \leq \vv{\mu}'^{(0)}\} \sqcup \{C_i \mid 1\leq i \leq \vv{\mu}'^{(<0)}\}$ contains exactly one representative in each translation class by $(n,n)$ in $\cB_{w'}$.
Also, we have
\begin{gather*}
A_1 <_{NW} A_2 <_{NW}  \cdots <_{NW}  A_{\vv{\mu}'(>0)} <_{NW}  B_1,
\\C_1 <_{NW}  C_2 <_{NW}  \cdots <_{NW}  C_{\vv{\mu}'(<0)} <_{NW}  B_1,
\\B_1 <_{NW}  B_2 <_{NW}  \cdots <_{NW}  B_{\vv{\mu}'(0)}.
\end{gather*}
However, $A_i$ and $C_j$ are not comparable with respect to northwest ordering.

\begin{example} When $w'=[ -15, -6, -5, 4, 23,24, 25, \emptyset,\emptyset,\emptyset,\emptyset]$ as above, we have
$$A_1 = (7,25), A_2 = (6,24), A_3=(5,23), B_1 = (4,4), C_3=(13,5), C_2=(14,6), C_1=(23,7).$$
Thus $A_1<_{NW} A_2<_{NW} A_3<_{NW} B_1$ and $C_1<_{NW} C_2<_{NW} C_3<_{NW} B_1$. However, any of $A_i$ and $C_j$ are not comparable.
\end{example}

In order to proceed the backward AMBC, it is necessary to calculate the backward numbering $d^{\bk, \st(w)}_{w'}$ as in \cite[Section 4.2]{cpy18}. For this, first we fix a proper numbering of $\st(w)$ such that $\st(w)^{(-1)} = (0, 0)$, i.e. $(0,0)$ is labeled -1. Also we define the numbering $d$ on $A_i, B_i,$ and $C_i$ by
$$d(A_i) = -i, \quad d(B_i) = -(\vv{\mu}'^{(>0)} + i), \quad d(C_i) = -i.$$
and extend to the whole of $\cB_{w'}$ by periodicity, i.e. if $b \in \cB_{w'}$ then $d(b+k(n,n))=d(b)+k\lambda_1$.
Note that this is the largest monotone numbering on $\{A_i\}_i \cup \{B_i\}_i \cup \{C_i\}_i$ which respects $\leq_{NW}$ subject to the condition that $d(A_1)=d(C_1) = -1$. (Recall that $\vv{\mu}'^{(>0)}\geq\vv{\mu}'^{(<0)}$ by Lemma \ref{lem:mu}.)

\begin{example} When $w'=[ -15, -6, -5, 4, 23,24, 25, \emptyset,\emptyset,\emptyset,\emptyset]$ as above, $d$ is defined by
$$d(A_1) = d(C_1) =-1, \quad d(A_2)=d(C_2)=-2,\quad  d(A_3)=d(C_3) = -3, \quad d(B_1) = -4.$$
\end{example}

\begin{lem}Let $d$ be the numbering of $\cB_{w'}$ defined above. Then $d=d^{\bk, \st(w)}_{w'}$.
\end{lem}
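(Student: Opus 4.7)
My plan is to verify the two conditions that characterize $d^{\bk,\st(w)}_{w'}$ via the iterative definition of Section \ref{sec:phipsi}: that $d$ is monotonic on $\cB_{w'}$ (so the termination criterion is satisfied), and that $d$ is obtained from the initial numbering $d_0(b) = \max\{\tilde{d}(s) \mid s \in \st(w),\ s \text{ strictly NW of } b\}$ by a sequence of valid lower-moves. Confluence of the iteration, guaranteed by the assertion that $d^{\bk,\st(w)}_{w'}$ is well-defined, will then identify the two.

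The first step is to establish monotonicity of $d$: for any $b_1, b_2 \in \cB_{w'}$ with $b_1 <_{NW} b_2$ strictly we must show $d(b_1) < d(b_2)$. Since $d$ is $\lambda_1$-periodic under $(n,n)$-translation, it suffices to fix $b_1$ in the fundamental set $\{A_i\}\sqcup\{B_i\}\sqcup\{C_i\}$ and let $b_2$ range over the same set together with all its $(n,n)$-translates. Within a single family the comparisons follow directly from the definitions of $A_i$, $B_i$, $C_i$ together with the strict inequalities on $a_i, b_i, c_i$ and the characterization of $d$ as the largest monotone numbering on the fundamental set subject to $d(A_1)=d(C_1)=-1$. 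The delicate cases are the translated cross-comparisons: $A_i$ against a shifted $C_j$, $C_i$ against a shifted $A_j$, and any pair involving a $B$-shift. In each such case the NW relation only emerges after one or more $(n,n)$-translations and the corresponding label gap must dominate the resulting contribution from the shift. This is precisely what Lemma \ref{lem:ineq} provides: parts (1)--(2) control when a $B$-translate first becomes NW-comparable to an $A$- or $C$-ball, while the block-diagonal bound $2\lfloor (j-i)/\lambda_2\rfloor \leq \lceil c_j/n\rceil - \lceil c_i/n\rceil$ of part (3) governs the $A$-versus-translated-$C$ comparisons.

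For the second step I would compute $d_0$ explicitly on the fundamental balls. Since $\st(w)$ is the identity stream on residues $\ol{n'+1}, \ldots, \ol{n}$ with $\tilde{d}(0,0) = -1$, the value $d_0(A_i)$ is the label of the largest stream point $y^\ast$ with $y^\ast < n'+1-i$, and analogously for $B_i$ and $C_i$. From this description one sees that $d \leq d_0$ pointwise but that $d_0$ is not yet monotonic (for instance $d_0(A_1) = d_0(A_2) = -1$ whenever $n' \geq 2$), so lowering steps are needed. I would then exhibit an explicit schedule of valid lower-moves transforming $d_0$ into $d$: scan the balls in order of decreasing target $d$-value and at each stage lower the current most-NW ball whose label still exceeds its target. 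Both conditions defining a valid lower-move follow from the chain structure used in Step 1, and monotonicity of $d$ ensures the schedule halts exactly when $d$ is reached.

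The hardest step will be the translated cross-comparison in Step 1, especially between an $A_i$ and a $(n,n)$-shift of $C_j$: inside the fundamental domain these two families are NW-incomparable, but after a shift they may become comparable, and the inequality on $d$-values then depends on a tight interplay between the index difference and the block-diagonal jump of the $c_i$'s, which is exactly the content of Lemma \ref{lem:ineq}(3) combined with the density bounds of Lemma \ref{lem:mu}(1). Once those estimates are in place, the iteration bookkeeping in Step 2 is routine, driven by the $\lambda_1$-periodicity shared by $d$ and $d_0$.
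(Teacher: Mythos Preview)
Your plan is essentially the paper's approach: the entire proof there is your Step~1, a nine--case analysis (three choices of $b\in\{A_i,B_i,C_i\}$ against three translated families for $b'$) establishing strict monotonicity of $d$ on all of $\cB_{w'}$, after which the paper concludes in one line by appealing to the construction of the backward numbering.

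Two points of divergence are worth noting. First, your bookkeeping of which sub-lemma handles which case is off: Lemma~\ref{lem:ineq}(3) is \emph{not} what resolves the $A_i$--versus--shifted--$C_j$ comparison. In the paper that case is handled by Lemma~\ref{lem:ineq}(1) (bounding $\lceil a_i/n\rceil$) combined with Lemma~\ref{lem:mu}(7). Lemma~\ref{lem:ineq}(3) is instead used for the $C_i$--versus--shifted--$C_j$ case, which you filed under the ``easy same-family'' comparisons; that case is in fact one of the more delicate ones. Second, your Step~2 (an explicit lowering schedule from $d_0$ to $d$ plus confluence) does not appear in the paper at all; the paper simply asserts that if $d\ne d^{\bk,\st(w)}_{w'}$ then $d$ would fail monotonicity. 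Your Step~2 is a reasonable way to make that implication explicit, though you should check that the lowering procedure is genuinely confluent (i.e.\ that any terminating sequence of valid lower-moves yields the same result), or else argue maximality of $d$ among monotone numberings bounded by $d_0$ directly.
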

\begin{proof} Suppose otherwise. Then from the construction of backward numbering, there exist $b, b' \in \cB_{w'}$ such that $b<_{NW} b'$ and $d(b) \geq d(b')$. Without loss of generality we may assume that $b$ is one of $A_i$, $B_i$, or $C_i$. We consider each case in the following.
\begin{enumerate}[label=(\alph*), leftmargin=*]
\item Suppose that $b=A_i$ for some $1\leq i \leq \vv{\mu}'^{(>0)}$. In particular, $d(b) = -i$.
\begin{enumerate}[label=$\bullet$, leftmargin=*]
\item If $b'=A_j+k(n,n)$ for some $1\leq j \leq \vv{\mu}'^{(>0)}$ and $k \in \bZ$, then we have $d(b') = -j+k\lambda_1\leq -i$, i.e. $k\leq \frac{j-i}{\lambda_1}$. Comparing the $y$-coordinates, we also have $a_j+kn>a_i$, i.e. $k >\frac{a_i-a_j}{n}$. It follows that $\frac{a_i-a_j}{j-i}< \frac{n}{\lambda_1} <1$. But this is impossible since $(a_1, a_2, \ldots)$ is a strictly decreasing sequence.
%
\item $b'=B_j+k(n,n)$ for some $1\leq j \leq \vv{\mu}'^{(0)}$ and $k \in \bZ$. This case we have $d(b') = -(\vv{\mu}'^{(>0)}+j)+k\lambda_1\leq -i$, i.e. $k\leq\frac{\vv{\mu}'^{(>0)}+j-i}{\lambda_1}$. Comparing the $y$-coordinates, we also have $b_j+kn>a_i$, i.e. $k>\frac{a_i-b_j}{n}$. It follows that $\frac{a_i-b_j}{\vv{\mu}'^{(>0)}+j-i}< \frac{n}{\lambda_1}<1$. This is again impossible since $(a_1, a_2, \ldots, a_{\vv{\mu}'^{(>0)}}, b_1, b_2, \ldots)$ is strictly decreasing.
%
\item $b'=C_j+k(n,n)$ for some $1\leq j \leq \vv{\mu}'^{(<0)}$ and $k \in \bZ$. This case we have $d(b') = -j+k\lambda_1\leq -i$. Comparing the $y$-coordinates, we also have $(1-\ceil{\frac{c_j}{n}})n+c_j+kn>a_i$, i.e. $k>\frac{a_i-c_j}{n}-1+\ceil{\frac{c_j}{n}}\geq \frac{a_i}{n}-1$. Thus $\ceil{\frac{a_i}{n}}-1\leq k$, which implies $i > \vv{\mu}'^{(>k)}$ by Lemma \ref{lem:ineq}. Thus we have
$\vv{\mu}'^{(>0)}\geq j \geq k\lambda_1 + i  > k\lambda_1 + \vv{\mu}'^{(>k)}\geq \vv{\mu}'^{(\geq 0)}$
by Lemma \ref{lem:mu} (as $\lambda_1\geq \lambda_2$), which is not possible.
\end{enumerate}
\item Suppose that $b=B_i$ for some $1\leq i \leq \vv{\mu}'^{(0)}$. In particular, $d(b) = -(\vv{\mu}'^{(>0)} + i)$.
\begin{enumerate}[label=$\bullet$, leftmargin=*]
\item If $b'=A_j+k(n,n)$ for some $1\leq j \leq \vv{\mu}'^{(>0)}$ and $k \in \bZ$, then we have $d(b') = -j+k\lambda_1\leq -(\vv{\mu}'^{(>0)} + i)$, i.e. $k\lambda_1\leq j-i-\vv{\mu}'^{(>0)} \leq -i<0$. However, comparing the $x$-coordinates we have $\vv{\mu}'^{(\leq 0)}+1-j<kn+(n'+1-i)$, i.e. $kn>\vv{\mu}'^{(\leq 0)}-j+i-n'=\vv{\mu}'^{(> 0)}-j+i\geq i>0$. This is impossible.
\item $b'=B_j+k(n,n)$ for some $1\leq j \leq \vv{\mu}'^{(0)}$ and $k \in \bZ$. This case we have $d(b') = -(\vv{\mu}'^{(>0)}+j)+k\lambda_1\leq -(\vv{\mu}'^{(>0)}+j)$, i.e. $k\lambda_1 \leq j-i$. Since $|j-i| < \vv{\mu}'^{(0)}\leq \lambda_2\leq \lambda_1$ by Lemma \ref{lem:mu}, it follows that $k=0$. This is impossible since it implies $b'=B_j$.
\item $b'=C_j+k(n,n)$ for some $1\leq j \leq \vv{\mu}'^{(<0)}$ and $k \in \bZ$. This case we have $d(b') = -j+k\lambda_1\leq-(\vv{\mu}'^{(>0)} + i)$, i.e. $k\lambda_1 \leq j-i-\vv{\mu}'^{(>0)} \leq -i<0$ since $\vv{\mu}'^{(<0)} \leq \vv{\mu}'^{(>0)}$ by Lemma \ref{lem:mu}. By looking at the $y$-coordinates, we also have $b_i<(1-\ceil{\frac{c_j}{n}})n+c_j+kn$, i.e. $kn>b_i-c_j-(1-\ceil{\frac{c_j}{n}})n\geq b_i-n>-n$, thus $k>-1$. This is again impossible.
\end{enumerate}

\item Suppose that $b=C_i$ for some $1\leq i \leq \vv{\mu}'^{(<0)}$. In particular, $d(b) = -i$.
\begin{enumerate}[label=$\bullet$, leftmargin=*]
\item If $b'=A_j+k(n,n)$ for some $1\leq j \leq \vv{\mu}'^{(>0)}$ and $k \in \bZ$, then we have $d(b') = -j+k\lambda_1\leq -i$. By looking at the $x$-coordinates, we also have $n'+1-j+kn>(1-\ceil{\frac{c_i}{n}})n+i$, i.e. $k>\frac{i+j-n'-1}{n}+1-\ceil{\frac{c_i}{n}}>1+\ceil{\frac{c_i}{n}}$. Thus by Lemma \ref{lem:ineq}  it follows that $i> \vv{\mu}'^{(<-k)}$. But then we have
$\vv{\mu}'^{(\geq0)}\geq \vv{\mu}'^{(>0)} \geq j \geq k\lambda_1 + i > k\lambda_1 + \vv{\mu}'^{(<-k)}$
but this is not possible by Lemma \ref{lem:mu}.
\item $b'=B_j+k(n,n)$ for some $1\leq j \leq \vv{\mu}'^{(0)}$ and $k \in \bZ$. This case we have $d(b') = -(\vv{\mu}'^{(>0)}+j)+k\lambda_1\leq -i$. By looking at the $x$-coordinates, we also have $(1-\ceil{\frac{c_i}{n}})n+i < \vv{\mu}'^{(\leq 0)}+1-j+kn$, i.e. $k>\frac{i+j-\vv{\mu}'^{(\leq 0)}-1}{n}+1-\ceil{\frac{c_i}{n}}>1-\ceil{\frac{c_i}{n}}$. Thus $i>\vv{\mu}'^{(<-k)}$ by Lemma \ref{lem:ineq}. Now we have
$\vv{\mu}'^{(\geq 0)}\geq j+ \vv{\mu}'^{(>0)}\geq k\lambda_1 + i > k\lambda_1 + \vv{\mu}'^{(<-k)}$
but this is not possible by Lemma \ref{lem:mu}.
\item $b'=C_j+k(n,n)$ for some $1\leq j \leq \vv{\mu}'^{(<0)}$ and $k \in \bZ$. This case we have $d(b') = -j+k\lambda_1\leq -i$, i.e. $k\leq \frac{j-i}{\lambda_1}$. By looking at the $y$-coordinates, we also have $(1-\ceil{\frac{c_j}{n}})n+c_j+kn>(1-\ceil{\frac{c_i}{n}})n+c_i$, i.e. $k>\frac{c_i-c_j}{n}+\ceil{\frac{c_j}{n}}-\ceil{\frac{c_i}{n}}>-1.$ Thus $k\geq 0$, and since $k=0$ case is not possible we have $k\geq 1$. Also $j\geq i$. This time we compare the $x$-coordinates and obtain $(1-\ceil{\frac{c_j}{n}})n+j+kn>(1-\ceil{\frac{c_i}{n}})n+i$, i.e. 
$k>\frac{i-j}{n}+\ceil{\frac{c_j}{n}}-\ceil{\frac{c_i}{n}}.$ Thus $k \geq \ceil{\frac{c_j}{n}}-\ceil{\frac{c_i}{n}}$ as $|i-j|<n$. By Lemma \ref{lem:ineq}, it implies that $2\floor{\frac{j-i}{\lambda_1}} \leq 2\floor{\frac{j-i}{\lambda_2}}\leq k\leq \frac{j-i}{\lambda_1},$ which is true only when $\frac{j-i}{\lambda_1}<1$. This contradicts that $1\leq k \leq \frac{j-i}{\lambda_1}$.
\end{enumerate}
\end{enumerate}

We considered all the possible cases. The lemma is proved.
\end{proof}

We set $R' \colonequals \bigsqcup_{i=1}^{\lambda_1}R_i'$ where $R'_i = \{b \in \cB_{w'} \mid d(b) = -i\}.
$ 
Then it is clear that $R'$ is the set of representatives in each translation class by $(n,n)$ in $\cB_{w'}$. Also for $1\leq i \leq \lambda_1$ let $Z_i$ be the zig-zag with the back corner-post at $\st(w)^{(-i)} = (1-i, 1-i)$ and outer corner-posts at the balls in $R_i'$. We set $R\colonequals \bigsqcup_{i=1}^{\lambda_i}R_i$ where $R_i =\{\textup{inner corner-posts of } Z_i\}$. Then it is also clear that $R$ is the set of representatives in each translation class by $(n,n)$ in $\cB_{w}$. Our goal is to show that $\{D(b) \mid b \in R\}$ is equal to $\vv{\mu}_{\lambda}(0)$ or $\vv{\mu}_{\lambda}(r,s)$, and by Lemma \ref{lem:blockdiag} it implies Theorem \ref{thm:lvbij}.

For $k \in \bZ$ and $1\leq i \leq \lambda_1$, let us set
\begin{align*}
&\tilde{A}_{k,i} \colonequals A_{k\lambda_1+i}+k(n,n)&& \textup{ if } 1 \leq k\lambda_1+i \leq \vv{\mu}'^{(>0)},
\\&\tilde{B}_{k,i} \colonequals  B_{k\lambda_1+i-\vv{\mu}'^{(>0)}}+k(n,n)&& \textup{ if } 1\leq k\lambda_1+i -\vv{\mu}'^{(>0)}\leq \vv{\mu}'^{(0)},
\\&\tilde{C}_{k,i} \colonequals C_{k\lambda_1+i}+k(n,n) &&  \textup{ if } 1\leq k\lambda_1+i \leq \vv{\mu}'^{(<0)}.
\end{align*}
Note that $d(\tilde{A}_{k,i})=d(\tilde{B}_{k,i})=d(\tilde{C}_{k,i})=-i$.
Also let $N_i=\floor{\frac{\vv{\mu}'^{(<0)}-i}{\lambda_1}}$ so that $N_i\lambda_1+i \leq \vv{\mu}'^{(<0)} <(N_i+1)\lambda_1 +i$. Since $\vv{\mu}'^{(\geq 0)}-\vv{\mu}'^{(<0)}<\lambda_1$ by Lemma \ref{lem:mu}, $(N_i+2)\lambda_1+i> \vv{\mu}'^{(\geq 0)}$. Now we consider the three possible cases below.
\begin{enumerate}[label=$(\alph*)$, leftmargin=*]
\item Suppose that $(N_i+1)\lambda_1+i\leq \vv{\mu}'^{(>0)}$. Then
$$R_i'=\{\tilde{A}_{k,i} \mid 1\leq k\leq N_i+1\} \sqcup\{\tilde{C}_{k,i} \mid 1\leq k\leq N_i\}.$$
\item Suppose that  $ \vv{\mu}'^{(>0)} < (N_i+1)\lambda_1+i \leq \vv{\mu}'^{(\geq0)}$. Then
$$R_i'=\{\tilde{A}_{k,i} \mid 1\leq k\leq N_i\}\sqcup \{\tilde{B}_{N_i+1,i}\} \sqcup\{\tilde{C}_{k,i} \mid 1\leq k\leq N_i\}.$$
\item Suppose that  $(N_i+1)\lambda_1+i > \vv{\mu}'^{(\geq0)}$. Then
$$R_i'=\{\tilde{A}_{k,i} \mid 1\leq k\leq N_i\} \sqcup\{\tilde{C}_{k,i} \mid 1\leq k\leq N_i\}. $$
\end{enumerate}
\begin{example}
Suppose that $n=15$, $\lambda=(5,4,4,2)$, and $\vv{\rho}' = (0,4,0)$. Thus 
$$w' = \Psi(\Tc_{(4,4,2)},\Tc_{(4,4,2)},(0,3,0)) = [-21, -20, -8, -7, 5, 6, 32, 33, 34, 46,\emptyset,\emptyset,\emptyset,\emptyset,\emptyset].$$
Then
\begin{gather*}
A_1=(10,46), \quad A_2=(9,34), \quad A_3=(8,33), \quad A_4=(7,32), \quad B_1 = (6,6), \quad B_2=(5,5),
\\C_4 = (18,7), \quad C_3=(19,8), \quad C_2=(31,9), \quad C_1=(32,10)
\end{gather*}
and $d(A_i)=d(C_i) = -i, d(B_i) = -5-i$. Also, 
\begin{gather*}
\tilde{A}_{0,i}=A_i \textup{ for } 1\leq i \leq 4, \quad \tilde{C}_{0,i}=C_i \textup{ for } 1\leq i \leq 4, 
\\\tilde{B}_{1,1}=B_{2}+(15,15) = (20,20), \quad \tilde{B}_{0,5}=B_{1} = (6,6).
\end{gather*}
Thus in particular $d(\tilde{A}_{0,i})=d(\tilde{C}_{0,i})=-i$ for $1\leq i \leq 4$ and $d(\tilde{B}_{1,1}) = -1, d(\tilde{B}_{0,5})= -5$.
\end{example}

Recall the process to calculate inner corner-posts of a zigzag from the back and outer corner-posts. If a zigzag $Z$ consists of the back corner post $(a_0, b_0)$ and outer corner-posts $(a_1, b_1), (a_2,b_2),$ $\ldots,(a_r,b_r)$ such that $a_1<a_2<\cdots < a_r$, then the outer corner-posts of $Z$ is given by
$$\{(a_0, b_1), (a_1, b_2), \ldots, (a_{r-1}, b_r), (a_r, b_0)\}.$$

We wish to apply this rule to each $Z_i$ for $1\leq i\leq \lambda_1$. However, in fact it is sufficient only to calculate the block diagonals $\{D(x) \mid x \in Z_i\}$ for each $Z_i$ for our purpose. To this end, we simplify the argument by exploiting the notion of block diagonals. 
\begin{defn}
For $(x,y) \in \bZ^2$, we define the $n$-block coordinate (or block coordinate if there is no confusion) of $(x,y)$ to be $\nbr{\ceil{\frac{x}{n}}-1,\ceil{\frac{y}{n}}-1}$ and write $\nco{(x,y)} = \nbr{\ceil{\frac{x}{n}}-1,\ceil{\frac{y}{n}}-1}$. For a subset $A \subset \bZ^2$, we define $\nco{A} \colonequals \{\nco{b} \mid b\in A\}$. Note that if $\nco{b} = \nbr{x,y}$ then $D(b) = y-x$.
\end{defn}
For $1\leq i \leq \lambda_1$, direct calculation shows that $\nco{\st(w)^{(-i)}} = \nbr{-1,-1}$ and also
\begin{align*}
\nco{\tilde{A}_{k,i}} = \nbr{k, \ceil{\frac{a_{k\lambda_1+i}}{n}}+k-1}, \qquad \nco{\tilde{B}_{k,i}} = \nbr{k,k}, \qquad \nco{\tilde{C}_{k,i}} =\nbr{k+1-\ceil{\frac{c_{k\lambda_1+i}}{n}}, k}.
\end{align*}
Therefore, $\nco{R_i'}$ and $\{D(x) \mid x\in R_i'\}$ for $1\leq i \leq \lambda_1$ is given as follows.

\begin{enumerate}[label=$(\alph*)$, leftmargin=*]
\item Suppose that $(N_i+1)\lambda_1+i\leq \vv{\mu}'^{(>0)}$. Then
\begin{gather*}
\nco{R_i'}=\left\{ \nbr{k, \ceil{\frac{a_{k\lambda_1+i}}{n}}+k-1} \mid 1\leq k\leq N_i+1\right\} \sqcup\left\{\nbr{k+1-\ceil{\frac{c_{k\lambda_1+i}}{n}}, k} \mid 1\leq k\leq N_i\right\},
\\\{D(x) \mid x\in R_i'\} = \left\{ \ceil{\frac{a_{k\lambda_1+i}}{n}}-1\mid 1\leq k\leq N_i+1\right\} \sqcup\left\{\ceil{\frac{c_{k\lambda_1+i}}{n}}-1\mid 1\leq k\leq N_i\right\}.
\end{gather*}
\item Suppose that $ \vv{\mu}'^{(>0)} < (N_i+1)\lambda_1+i \leq \vv{\mu}'^{(\geq0)}$. Then
\begin{align*}
\nco{R_i'}=& \ \left\{ \nbr{k, \ceil{\frac{a_{k\lambda_1+i}}{n}}+k-1} \mid 1\leq k\leq N_i\right\}\sqcup \{\nbr{N_i+1,N_i+1}\} ,=
\\&\quad \sqcup\left\{\nbr{k+1-\ceil{\frac{c_{k\lambda_1+i}}{n}}, k}\mid 1\leq k\leq N_i\right\},
\\\{D(x) \mid x\in R_i'\} =&\ \left\{ \ceil{\frac{a_{k\lambda_1+i}}{n}}-1\mid 1\leq k\leq N_i\right\} \sqcup\{0\}\sqcup\left\{\ceil{\frac{c_{k\lambda_1+i}}{n}}-1\mid 1\leq k\leq N_i\right\}.
\end{align*}
\item Suppose that $(N_i+1)\lambda_1+i > \vv{\mu}'^{(\geq0)}$. Then
\begin{gather*}
\nco{R_i'}=\left\{ \nbr{k, \ceil{\frac{a_{k\lambda_1+i}}{n}}+k-1} \mid 1\leq k\leq N_i\right\} \sqcup\left\{\nbr{k+1-\ceil{\frac{c_{k\lambda_1+i}}{n}}, k} \mid 1\leq k\leq N_i\right\},
\\\{D(x) \mid x\in R_i'\} = \left\{ \ceil{\frac{a_{k\lambda_1+i}}{n}}-1\mid 1\leq k\leq N_i\right\} \sqcup\left\{\ceil{\frac{c_{k\lambda_1+i}}{n}}-1\mid 1\leq k\leq N_i\right\}.
\end{gather*}
\end{enumerate}

Now we calculate $\nco{R_i}$ for each $1\leq i \leq \lambda_1$. Note that there are no two balls $b, b' \in R_i'$ such that $\nco{b}$ and $\nco{b'}$ have the same $x$ or $y$ coordinate. Thus, to this end we may apply the usual backward AMBC process on $\{\nco{(1-i,1-i)}\} \cup \nco{R_i}$ directly; it will give the same result as applying the backward AMBC process on $\{(1-i,1-i)\} \cup R_i$ and calculating the block coordinates of the result. Now direct calculation shows the following.

\begin{enumerate}[label=$(\alph*)$, leftmargin=*]
\item Suppose that $(N_i+1)\lambda_1+i\leq \vv{\mu}'^{(>0)}$. Then
\begin{gather*}
\begin{aligned}
\nco{R_i}=& \ \left\{ \nbr{k-1, \ceil{\frac{a_{k\lambda_1+i}}{n}}+k-1} \mid 1\leq k\leq N_i+1\right\}\sqcup\{\nbr{N_i+1,N_i}\}
\\&\quad \sqcup\left\{\nbr{k+1-\ceil{\frac{c_{k\lambda_1+i}}{n}}, k-1} \mid 1\leq k\leq N_i\right\},
\end{aligned}
\\\{D(x) \mid x\in R_i\} = \left\{ \ceil{\frac{a_{k\lambda_1+i}}{n}}\mid 1\leq k\leq N_i+1\right\} \sqcup\{-1\}\sqcup\left\{\ceil{\frac{c_{k\lambda_1+i}}{n}}-2\mid 1\leq k\leq N_i\right\}.
\end{gather*}
\item Suppose that $ \vv{\mu}'^{(>0)} < (N_i+1)\lambda_1+i \leq \vv{\mu}'^{(\geq0)}$. Then
\begin{align*}
\nco{R_i}=& \ \left\{ \nbr{k-1, \ceil{\frac{a_{k\lambda_1+i}}{n}}+k-1} \mid 1\leq k\leq N_i\right\}\sqcup \{\nbr{N_i,N_i+1},\nbr{N_i+1,N_i}\} 
\\&\quad \sqcup\left\{\nbr{k+1-\ceil{\frac{c_{k\lambda_1+i}}{n}}, k-1}\mid 1\leq k\leq N_i\right\},
\\\{D(x) \mid x\in R_i\} =&\ \left\{ \ceil{\frac{a_{k\lambda_1+i}}{n}}\mid 1\leq k\leq N_i\right\} \sqcup\{1,-1\}\sqcup\left\{\ceil{\frac{c_{k\lambda_1+i}}{n}}-2\mid 1\leq k\leq N_i\right\}.
\end{align*}
\item Suppose that $(N_i+1)\lambda_1+i > \vv{\mu}'^{(\geq0)}$. Then
\begin{gather*}
\begin{aligned}
\nco{R_i}=& \left\{ \nbr{k, \ceil{\frac{a_{k\lambda_1+i}}{n}}+k-1} \mid 1\leq k\leq N_i\right\} \sqcup\{\nbr{N_i,N_i}\}
\\&\quad \sqcup\left\{\nbr{k+1-\ceil{\frac{c_{k\lambda_1+i}}{n}}, k} \mid 1\leq k\leq N_i\right\},
\end{aligned}
\\\{D(x) \mid x\in R_i\} = \left\{ \ceil{\frac{a_{k\lambda_1+i}}{n}}\mid 1\leq k\leq N_i\right\} \sqcup\{0\}\sqcup\left\{\ceil{\frac{c_{k\lambda_1+i}}{n}}-2\mid 1\leq k\leq N_i\right\}.
\end{gather*}
\end{enumerate}

\begin{example}
Recall the example above when $n=15$, $\lambda=(5,4,4,2)$, $\vv{\rho}' = (0,4,0)$, and 
$$w' = \Psi(\Tc_{(4,4,2)},\Tc_{(4,4,2)},(0,3,0)) = [-21, -20, -8, -7, 5, 6, 32, 33, 34, 46,\emptyset,\emptyset,\emptyset,\emptyset,\emptyset].$$
We have
\begin{alignat*}{3}
\{\st(w)^{(-1)}\}\cup R_1' &= \{(0,0), \tilde{A}_{0,1}, \tilde{B}_{1,1}, \tilde{C}_{0,1}\} &&= \{(0,0),(10,46), (20,20), (32,10)\},
\\\{\st(w)^{(-2)}\}\cup R_2' &=\{(-1,-1), \tilde{A}_{0,2},  \tilde{C}_{0,2}\} &&= \{(-1,-1), (9,34), (31,9)\},
\\\{\st(w)^{(-3)}\}\cup R_3' &=\{(-1,-1), \tilde{A}_{0,3},  \tilde{C}_{0,3}\} &&= \{(-2,-2), (8,33), (19,8)\},
\\\{\st(w)^{(-4)}\}\cup R_4' &=\{(-1,-1), \tilde{A}_{0,4},  \tilde{C}_{0,4}\} &&= \{(-3,-3), (7,32), (18,7)\},
\\\{\st(w)^{(-5)}\}\cup R_5' &=\{(-1,-1), \tilde{B}_{0,5}\} &&= \{(-4,-4),(6,6)\}.
\end{alignat*}
Therefore
\begin{alignat*}{3}
\nco{\{\st(w)^{(-1)}\}\cup R_1'} &= \{(-1,-1),(0,3), (1,1), (2,0)\},
\\\nco{\{\st(w)^{(-2)}\}\cup R_2'} &= \{(-1,-1), (0,2), (2,0)\},
\\\nco{\{\st(w)^{(-3)}\}\cup R_3'} &= \{(-1,-1), (0,2), (1,0)\},
\\\nco{\{\st(w)^{(-4)}\}\cup R_4'} &= \{(-1,-1), (0,2), (1,0)\},
\\\nco{\{\st(w)^{(-5)}\}\cup R_5'} &= \{(-1,-1),(0,0)\}.
\end{alignat*}
By applying the backward AMBC algorithm to each $\nco{R_i'}$, we get
\begin{alignat*}{3}
\nco{R_1} &= \{(-1,3),(0,1), (1,0), (2,-1)\},
\\\nco{R_2} &= \{(-1,2), (0,0), (2,-1)\},
\\\nco{R_3} &= \{(-1,2), (0,0), (1,-1)\},
\\\nco{R_4} &= \{(-1,2), (0,0), (1,-1)\},
\\\nco{R_5} &= \{(-1,0),(0,-1)\}.
\end{alignat*}
Note that this is consistent with
\begin{alignat*}{3}
R_1 &= \{(0,46),(10,20), (20,10), (32,0)\},
\\R_2 &= \{(-1,34), (9,9), (31,-1)\},
\\R_3 &= \{(-2,33), (8,8), (19,-2)\},
\\R_4 &= \{(-3,32), (7,7), (18,-3)\},
\\R_5 &= \{(-4,6),(6,-4)\}.
\end{alignat*}
Furthermore, we have
\begin{alignat*}{3}
\{D(x)\mid x\in R_1'\} &= \{3, 0, -2\}, \qquad&\{D(x)\mid x\in R_2'\} &= \{2,-2\},
\\\{D(x)\mid x\in R_3'\}&=\{D(x)\mid x\in R_4'\} = \{2, -1\}, \qquad &\{D(x)\mid x\in R_5'\}&= \{0\}.
\end{alignat*}
and 
\begin{alignat*}{3}
\{D(x)\mid x\in R_1'\} &= \{4, 1, -1,-3\}, \qquad&\{D(x)\mid x\in R_2'\} &= \{3,0,-3\},
\\\{D(x)\mid x\in R_3'\}&=\{D(x)\mid x\in R_4'\} = \{3, -0,-2\}, \qquad &\{D(x)\mid x\in R_5'\}&= \{1,-1\}.
\end{alignat*}
\end{example}

From this calculation, we derive the relations between $\vv{\mu}$ and $\vv{\mu}'$. (Recall that $N_i=\floor{\frac{\vv{\mu}'^{(<0)}-i}{\lambda_1}}$.)
\begin{enumerate}[label=$\bullet$]
\item For $j>0$, $ \vv{\mu}^{(j+1)}=\vv{\mu}'^{(j)}$ and $\vv{\mu}^{(-j-1)}=\vv{\mu}'^{(-j)}$.
\item $\vv{\mu}^{(0)}= \#\{i \mid 1\leq i \leq \lambda_1, (N_i+1)\lambda_1+i > \vv{\mu}'^{(\geq0)}\}$
\item $\vv{\mu}^{(1)}= \#\{i \mid 1\leq i \leq \lambda_1, \vv{\mu}'^{(>0)} < (N_i+1)\lambda_1+i \leq \vv{\mu}'^{(\geq0)}\}$
\item $\vv{\mu}^{(-1)}= \#\{i \mid 1\leq i \leq \lambda_1, (N_i+1)\lambda_1+i \leq \vv{\mu}'^{(\geq0)}\}$
\end{enumerate}
Now it is clear from the description that $\vv{\mu}^{(1)} = \vv{\mu}'^{(\geq0)}-\vv{\mu}'^{(>0)} =\vv{\mu}'^{(0)}$. Also by Lemma \ref{lem:mu},
\begin{align*}
\vv{\mu}^{(-1)}&=\vv{\mu}'^{(\geq0)}-\vv{\mu}'^{(<0)}=\vv{\mu}_{\tilde{\lambda}}(0)^{(\geq0)}-\vv{\mu}_{\tilde{\lambda}}(0)^{(<0)} = \vv{\mu}_{\tilde{\lambda}}(0)^{(0)}=\sum_{k\geq 1} (-1)^{k-1} \lambda_k.
\end{align*}
Since $\sum_{j \in \bZ}\vv{\mu}^{(j)} = \lambda_1+\sum_{j \in \bZ}\vv{\mu}'^{(j)}$, it follows that $\vv{\mu}^{(0)}=\sum_{k\geq 2} (-1)^k \lambda_k$.

Now suppose that $\vv{\mu}'=\vv{\mu}_{\tilde{\lambda}}(0)$. Then it is easy to see that $\vv{\mu}_{\lambda}(0)^{(j+1)}=\vv{\mu}_{\tilde{\lambda}}(0)^{(j)}, \vv{\mu}_{\lambda}(0)^{(-j-1)}=\vv{\mu}_{\tilde{\lambda}}(0)^{(-j)}$ for $j\geq 0$ and $\vv{\mu}_{\lambda}(0)^{(0)}=\#\{ i \mid 1\leq i \leq \lambda_1, \lambda_i'\in 2\bZ\}=\sum_{k\geq 2} (-1)^k \lambda_k$. By comparing this with the relations between $\vv{\mu}$ and $\vv{\mu}'$, we conclude that $\vv{\mu}=\vv{\mu}_{\lambda}(0)$.

This time suppose that  $\vv{\mu}'=\vv{\mu}_{\tilde{\lambda}}(r,s)$ where $r \in \tilde{\lambda}$. Then by Lemma \ref{lem:mu} and the argument above it is still true that $\vv{\mu}_{\lambda}(s,r)^{(-j-1)}=\vv{\mu}_{\tilde{\lambda}}(s,r)^{(-j)}$ for $j>0$ and also 
\begin{gather*}
\vv{\mu}_{\lambda}(s,r)^{(-1)}=\vv{\mu}_{\lambda}(0)^{(-1)}=\#\{ i \mid 1\leq i \leq \lambda_1, \lambda_i'-1\in 2\bZ\}=\sum_{k\geq 1} (-1)^{k-1} \lambda_k
\\\vv{\mu}_{\lambda}(s,r)^{(0)}=\vv{\mu}_{\lambda}(0)^{(0)}=\#\{ i \mid 1\leq i \leq \lambda_1, \lambda_i'\in 2\bZ\}=\sum_{k\geq 2} (-1)^{k} \lambda_k.
\end{gather*}
(The second equation holds since $r\leq \lambda_2$.) Also from the construction of $\mathfrak{W}_{\tilde{\lambda}}(r,s)$ and $\mathfrak{W}_{\lambda}(r,s)$, it is also easy to show that  $\vv{\mu}_{\lambda}(s,r)^{(j+1)}=\vv{\mu}_{\tilde{\lambda}}(s,r)^{(j)}$ for $j>0$. Now it follows from $\sum_{j \in \bZ}\vv{\mu}_\lambda(r,s)^{(j)} = \lambda_1+\sum_{j \in \bZ}\vv{\mu}_{\tilde{\lambda}}^{(j)}$ that $\vv{\mu}_{\lambda}(s,r)^{(1)}=\vv{\mu}_{\tilde{\lambda}}(s,r)^{(0)}$. But this also shows that $\vv{\mu}=\vv{\mu}_\lambda(s,r)$ by considering the relations between $\vv{\mu}$ and $\vv{\mu}'$.

This completes the proof of Theorem \ref{thm:lvbij}.


\appendix
\section{Asymptotic Hecke algebras for $SL_n$ and $PGL_n$}
So far, we discussed the extended affine symmetric group and its asymptotic Hecke algebra. In terms of representation theory, it means that we only considered the affine Weyl group of $GL_n$. In this section, we observe how our result can be extended to simple groups of type $A$, especially $SL_n$ and $PGL_n$. For simplicity, we assume that $GL_n$, $SL_n$, and $PGL_n$ are all defined over $\bC$.

\subsection{$G=SL_n$ case} Note that $\shift^n \in \extSn$ is in the center of $\extSn$. The affine Weyl group of $SL_n$ can be identified with $\extSn/\br{\shift^n}$, and its two-sided cell (resp. left cell, resp. right cell) is given by the image of such a cell of $\extSn$ under the quotient map $\pi: \extSn \twoheadrightarrow \extSn/\br{\shift^n}$. In particular, its two-sided cells are also parametrized by the partitions of $n$.

Likewise, for a two-sided cell $\tsc$ of $\extSn$ we define
$\cJ_\tsc^{SL} \colonequals \cJ_{\tsc}/(\shift^n-1)$. Then it is clear that this is an asymptotic Hecke algebra corresponding to $SL_n$ attached to the two-sided cell $\pi(\tsc)$. Now the following theorem gives a structural description of $\cJ_\tsc^{SL}$ in terms of Theorem \ref{thm:matisom}.
\begin{thm} For $\tsc=\tsc_\lambda$, the isomorphism in Theorem \ref{thm:matisom} factors through $$\cJ_\tsc^{SL}  \simeq \Mat_{\chi\times\chi}(\Rep(F_\lambda))/(\shift^n-1) \simeq \Mat_{\chi\times\chi}(\Rep(F_\lambda)/\br{V(\lambda)})$$
where both $\cJ_\tsc$ and $\Mat_{\chi\times\chi}(\Rep(F_\lambda))$ are regarded as $\bZ[\shift^n]$-algebras and $V(\lambda)$ is the irreducible representation of $F_\lambda$ of highest weight $\lambda=(\lambda_1, \lambda_2, \ldots) \in \Dom(F_\lambda)$. In particular, $\cJ_\tsc^{SL}$ is a matrix algebra.
\end{thm}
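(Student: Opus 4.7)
The strategy is to identify the action of $\shift^n$ on $\cJ_{\tsc_\lambda}$ with a concrete scalar operator on the matrix-algebra side of the isomorphism $\Upsilon_\lambda$ of Theorem~\ref{thm:matisom}, and then pass to the quotient. Since $\shift^n$ is central in $\extSn$, Lemma~\ref{lem:gamma}(4) gives $\ft_{\shift^n} \ft_w = \ft_{\shift^n w}$, so left multiplication by $\ft_{\shift^n}$ is a well-defined endomorphism of the two-sided ideal $\cJ_{\tsc_\lambda}$, and the relation $(\shift^n - 1) \subset \cJ_\tsc$ is generated, as a two-sided ideal, by the elements $\ft_{\shift^n w} - \ft_w$.

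The main computational step is to iterate Proposition~\ref{prop:shift} $n$ times and use that $\shift$ acts with period $n$ on tabloids (so $\shift^n(P) = P$) to obtain, for $\Phi(w) = (P, Q, \offset_{P,Q} + \vv{\rho})$,
\[
\Phi(\shift^n w) = \Bigl(P,\, Q,\, \offset_{P, Q} + \vv{\rho} + \sum_{k=0}^{n-1} \vv{\delta}(\shift^k(P), n)\Bigr).
\]
The combinatorial heart of the argument is the identity $\sum_{k=0}^{n-1} \vv{\delta}(\shift^k(P), n) = \lambda$, where $\lambda = (\lambda_1, \ldots, \lambda_{l(\lambda)})$ is viewed as an element of $\bZ^{l(\lambda)}$. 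To verify this, note that as $k$ ranges over $\{0, 1, \ldots, n-1\}$, the entry $\ol{n}$ in $\shift^k(P)$ sweeps through every cell of $P$ exactly once; within a maximal block of rows of $P$ of constant part size $\ell$, only the topmost row contributes a nonzero $\vv{\delta}$-vector (the indicator of the block), and it does so $\ell$ times (once per cell of that row). Summing over all blocks yields exactly $\lambda$.

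Because $\lambda$ is constant on each block (i.e.\ determinantal), $\rev_\lambda(\lambda) = \lambda$ and $V(\lambda)$ is a tensor product of powers of the determinant characters of the factors of $F_\lambda$; in particular $V(\mu + \lambda) = V(\mu) \otimes V(\lambda)$ for every $\mu \in \Dom(F_\lambda)$, and $[V(\lambda)]$ is invertible in $\Rep(F_\lambda)$. Combined with the displayed formula for $\Phi(\shift^n w)$, this shows that under $\Upsilon_\lambda$ left multiplication by $\ft_{\shift^n}$ becomes scalar multiplication by $[V(\lambda)]$ on $\Mat_{\chi\times\chi}(\Rep(F_\lambda))$. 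Hence $\Upsilon_\lambda$ descends to an isomorphism
\[
\cJ_\tsc^{SL} = \cJ_\tsc/(\shift^n - 1) \xrightarrow{\ \sim\ } \Mat_{\chi\times\chi}(\Rep(F_\lambda))/(V(\lambda) - 1),
\]
and the standard identification $\Mat_{\chi\times\chi}(R)/\Mat_{\chi\times\chi}(I) = \Mat_{\chi\times\chi}(R/I)$ for an ideal $I \subset R$ yields $\Mat_{\chi\times\chi}(\Rep(F_\lambda)/\br{V(\lambda)})$, exhibiting $\cJ_\tsc^{SL}$ as a matrix algebra. The only genuinely nontrivial step is the combinatorial identity $\sum \vv{\delta}(\shift^k(P),n) = \lambda$, but this is routine once the definition of $\vv{\delta}$ is unpacked.
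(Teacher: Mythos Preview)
Your proof is correct and follows essentially the same route as the paper: both arguments reduce to the fact, derived by iterating Proposition~\ref{prop:shift}, that $\Phi(\shift^n w) = (P, Q, \vv{\rho} + \lambda)$ whenever $\Phi(w) = (P, Q, \vv{\rho})$. The paper states this consequence in a single line and leaves the combinatorial identity $\sum_{k=0}^{n-1} \vv{\delta}(\shift^k(P), n) = \lambda$ implicit, whereas you spell it out carefully; your added detail is accurate and helpful, but there is no genuine difference in approach.
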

\begin{proof}
The first isomorphism is obvious. The second isomorphism follows from the fact that if $\Phi(w) = (P, Q, \vv{\rho})$ then $\Phi(\shift^n) =(P, Q, \vv{\rho}+\lambda)$ which follows from Proposition \ref{prop:shift}.
\end{proof}
\begin{rmk}
A similar statement can also be found in \cite[Section 8.4]{xi02}.
\end{rmk}

\subsection{$G=PGL_n$ case} \label{sec:pgln} Consider the case when $G=PGL_n$. Then its affine Weyl group is
$$\affSn \colonequals \{ w \in \extSn \mid \sum_{i=1}^n w(i) = n(n+1)/2\},$$
i.e. the (non-extended) affine symmetric group. For an integer sequence $(a_1, a_2, \ldots)$, let us define $|(a_1, a_2, \ldots)|\colonequals \sum_i a_i$. Then by \cite[Theorem 10.3]{cpy18}, we have
$$\Phi(\affSn)=\{(P, Q, \vv{\rho}) \in \Odom \mid |\vv{\rho}|=0\}$$
Also its two-sided cell (resp. left cell, resp. right cell) is obtained from the intersection of such a cell in $\extSn$ with $\affSn$. In particular, the two-sided cells of $\affSn$ are also parametrized by the partitions of $n$.

Let $\cJ_\tsc^{PGL} \subset \cJ_{\tsc}$ be the asymptotic Hecke algebra corresponding to $PGL_n$ attached to $\tsc \cap \affSn$. In general, $\cJ_\tsc^{PGL}$ is no longer a matrix algebra; a counterexample is given in \cite[Section 8.3]{xi02}. Here, we discuss a sufficient condition when $\cJ_\tsc^{PGL}$ is indeed isomorphic to a matrix algebra.
\begin{thm}  Suppose that $\tsc=\tsc_\lambda$. If $\gcd(\lambda_1', \lambda_2', \ldots)=1$, then there exists a ring isomorphism $\cJ_\tsc^{PGL} \simeq \Mat_{\chi\times \chi}(\Rep(\tilde{F}_\lambda))$ where $\tilde{F}_\lambda = F_\lambda/\{cI\mid c\in \bC\}$.
\end{thm}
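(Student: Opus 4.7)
The approach is to realize $\cJ^{PGL}_{\tsc_\lambda}$ as a subalgebra of $\Mat_{\chi\times\chi}(\Rep(F_\lambda))$ via $\Upsilon$ from Theorem \ref{thm:matisom}, and then untwist it by conjugation with a diagonal matrix of one-dimensional characters of $F_\lambda$.

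First, I would determine $\Upsilon(\cJ^{PGL}_{\tsc_\lambda})$ as a subalgebra. By \cite[Theorem 10.3]{cpy18} we have $w\in\affSn$ iff $|\vv{\rho}(w)|=0$, so for $w$ with $\Phi(w)=(P,Q,\offset_{P,Q}+\vv{\rho})$ the condition $w\in\affSn$ becomes $|\vv{\rho}|=-|\offset_{P,Q}|$. Combined with Theorem \ref{thm:matisom} and the identity $|\rev_\lambda(\vv{\rho})|=|\vv{\rho}|$, this identifies $\Upsilon(\cJ^{PGL}_{\tsc_\lambda})$ with the subalgebra $\cS\subset\Mat_{\chi\times\chi}(\Rep(F_\lambda))$ whose $(P,Q)$-entry is a $\bZ$-linear combination of irreducibles $V(\mu)$ with $|\mu|=-|\offset_{P,Q}|$.

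Next, I would use the hypothesis to construct characters for untwisting. Any character of $F_\lambda=\prod_iGL_{m_i}$ has the form $\prod_i\det_i^{a_i}$ with weight sum $\sum_ia_im_i$; since $m_i=\lambda'_i-\lambda'_{i+1}$ and $\lambda'_j=\sum_{i\geq j}m_i$, we have $\gcd(m_1,m_2,\ldots)=\gcd(\lambda'_1,\lambda'_2,\ldots)=1$ by hypothesis, so for any $k\in\bZ$ there exists a character of $F_\lambda$ with weight sum $k$. Fix a reference tabloid $T_0$ of shape $\lambda$, and for each tabloid $P$ of shape $\lambda$ choose a character $\xi_P$ of $F_\lambda$ with $|\xi_P|=-|\offset_{T_0,P}|$. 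Using $\offset_{P,Q}=\offset_{T_0,Q}-\offset_{T_0,P}$ (a consequence of $\offset_{P,Q}+\offset_{Q,R}=\offset_{P,R}$), this yields the cocycle relation $|\xi_P|-|\xi_Q|=|\offset_{P,Q}|$.

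Finally, let $D=\operatorname{diag}(\xi_P)_P$; since the $\xi_P$ are units in the commutative ring $\Rep(F_\lambda)$, conjugation $M\mapsto DMD^{-1}$ is a ring automorphism of $\Mat_{\chi\times\chi}(\Rep(F_\lambda))$ that sends a $(P,Q)$-entry $V(\mu)$ to $V(\mu)\otimes\xi_P\otimes\xi_Q^{-1}$. For $M\in\cS$ this tensor product is irreducible with weight sum $|\mu|+|\xi_P|-|\xi_Q|=-|\offset_{P,Q}|+|\offset_{P,Q}|=0$, hence lies in $\Rep(\tilde F_\lambda)$, so $D\cS D^{-1}\subset\Mat_{\chi\times\chi}(\Rep(\tilde F_\lambda))$. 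Conversely, every irreducible representation of $\tilde F_\lambda$ at position $(P,Q)$ has the form $V(\nu)\otimes\xi_P\otimes\xi_Q^{-1}$ for a unique $V(\nu)$ with $|\nu|=-|\offset_{P,Q}|$, so $D\cS D^{-1}=\Mat_{\chi\times\chi}(\Rep(\tilde F_\lambda))$. The composition $M\mapsto D\Upsilon(M)D^{-1}$ is then the desired ring isomorphism. The main obstacle is the integer solvability of $\sum_ia_im_i=-|\offset_{T_0,P}|$ for every $P$, which is precisely where the hypothesis $\gcd(\lambda'_1,\lambda'_2,\ldots)=1$ enters; the rest is a standard diagonal-conjugation computation.
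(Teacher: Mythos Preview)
Your proof is correct and follows the same overall architecture as the paper: realize $\cJ^{PGL}_{\tsc_\lambda}$ inside $\Mat_{\chi\times\chi}(\Rep(F_\lambda))$ via $\Upsilon$, then untwist by a diagonal matrix of one-dimensional characters of $F_\lambda$. The genuine difference is in how the characters are produced. The paper invokes \cite[Theorem 8.6]{clp17} to see that, under $\gcd(\lambda'_1,\lambda'_2,\ldots)=1$, every tabloid is reachable from $\Ta_\lambda$ by star operations alone (no $\shift$); tracking $w_0^\lambda$ along such a path gives elements $w_T\in\affSn$ with $\Phi(w_T)=(\Ta_\lambda,T,\vv\rho_T+\offset_{\Ta_\lambda,T})$ where $\vv\rho_T$ is determinantal and $|\vv\rho_T|=-|\offset_{\Ta_\lambda,T}|$, and then changes matrix units to $\ft'_{P,Q}=\ft_{(w_P)^{-1}}\ft_{w_Q}$. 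You instead bypass the combinatorial connectivity result entirely: from $m_i=\lambda'_i-\lambda'_{i+1}$ one gets $\gcd(m_i)=\gcd(\lambda'_i)=1$, so B\'ezout gives characters $\xi_P=\prod_i\det_i^{a_i}$ with any prescribed weight sum, and you conjugate explicitly by $\operatorname{diag}(\xi_P)$. Your route is more elementary and self-contained, needing only Theorem \ref{thm:matisom}, the additivity $\offset_{P,Q}+\offset_{Q,R}=\offset_{P,R}$, and basic facts about $\Rep(F_\lambda)$; the paper's route stays within the AMBC framework and exhibits the twisting characters as coming from actual elements of $\affSn$, at the cost of importing the nontrivial star-connectivity theorem.
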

\begin{proof} As $\gcd(\lambda_1', \lambda_2', \ldots)=1$, by \cite[Theorem 8.6]{clp17} any tabloid $T$ of shape $\lambda$ can be obtained from $\Ta_\lambda$ by successive star operations (without applying $\shift$). Now for each $T \in \RSYT(\lambda)$, we fix a series of such star operations. Then it induces a bijection $\phi_T: \lca_\lambda \rightarrow \lc_T$ defined by the composition of right star operations. Here, $\lc_T$ is the left cell parametrized by $T$, i.e. $w\in \lc_T$ if and only if $Q(w) = T$. Also note that star operations stabilizes $\affSn \subset \extSn$, which implies that it restricts to $\phi_T : \lca_\lambda \cap \affSn \rightarrow \lc_T \cap \affSn$.
	
Now for any tabloid $T$ of shape $\lambda$, we define $w_T\colonequals \phi_T(w_0^\lambda)$ where $w_0^\lambda$ is as in Section \ref{sec:cancells}. Then it follows that $\Phi(w_T)=(\Ta_\lambda, T, \vv{\rho}_T+\offset_{\Ta_\lambda, T})$ for some $\vv{\rho}_T = (\rho_1, \rho_2, \ldots)$ such that $|\vv{\rho}_T|+|\offset_{\Ta_\lambda, T}|=0$. By similar argument to Lemma \ref{lem:determ}, we see that $\vv{\rho}_T$ is determinantal and $\ft_{w_T}=\ft(\Ta, \Ta, \vv{\rho}_T)\ft_{\Ta,T}$. Note that $\ft(\Ta, \Ta, \vv{\rho}_T)$ and $\ft_{\Ta,T}$ are not in general contained in $\cJ_\tsc^{PGL}$, but $\ft_{w_T}$ is an element of $\cJ_\tsc^{PGL}$.

For $P, Q \in \RSYT(\lambda)$, let us define $\ft'_{P, Q}\colonequals (\ft_{w_P})^{-1}\ft_{w_Q} \in \cJ_\tsc^{PGL}$. Then it is clear that $\ft_{P,Q}' = \ft(P, P, \vv{\rho}_Q-\vv{\rho}_P)\ft_{P,Q}$. As $\vv{\rho}_Q-\vv{\rho}_P$ is determinantal, $\{\ft'_{P,Q} \mid P, Q \in \RSYT(\lambda)\}$ also gives a matrix basis of $\cJ_\tsc$, i.e. we have an algebra isomorphism $\Upsilon': \cJ_\tsc \rightarrow \Mat_{\chi\times\chi}(\Rep(F_\lambda))$ where $\Upsilon'(\ft'_{P,Q})$ is an elementary matrix for any $P, Q \in \RSYT(\lambda)$. (In general $\Upsilon'$ is different from $\Upsilon$ defined in Theorem \ref{thm:matisom} since $\vv{\rho}_Q-\vv{\rho}_P$ needs not be zero.)

It remains to show that $\Upsilon'(\cJ_\tsc^{PGL}) =  \Mat_{\chi\times \chi}(\Rep(\tilde{F}_\lambda))$; here we identify $\Rep(\tilde{F}_\lambda)$ with the subring of $\Rep(F_\lambda)$ generated by $V(\vv{\mu})$ such that $|\vv{\mu}|=0$. Recall that for any $w \in \affSn$ we have $\Phi(w) = (P, Q, \vv{\rho})$ such that $|\vv{\rho}| = 0$. In particular, 
\begin{align*}
\ft_w &= \ft(P, Q, \vv{\rho}) = \ft(P, P, \vv{\rho}-\offset_{P, Q})\ft_{P,Q}
\\&= \ft(P, P, \vv{\rho}-\offset_{P, Q}+\vv{\rho}_P-\vv{\rho}_Q) \ft'_{P,Q}
\end{align*}
where
$$|\vv{\rho}-\offset_{P, Q}+\vv{\rho}_P-\vv{\rho}_Q| = |\vv{\rho}_P|-|\vv{\rho}_Q|-|\offset_{P, Q}|=-|\offset_{\Ta_\lambda, P}|+|\offset_{\Ta_\lambda, Q}|-|\offset_{P, Q}|=0$$
since $\offset_{\Ta_\lambda, Q}-\offset_{\Ta_\lambda, P}=\offset_{P, Q}$. As $\Upsilon'(\ft_w)$ is a matrix whose only nonzero entry corresponds to $V(\vv{\rho}-\offset_{P, Q}+\vv{\rho}_P-\vv{\rho}_Q) \in \Rep(\tilde{F}_\lambda)$, we conclude that $\Upsilon'(\cJ_\tsc^{PGL}) \subset \Mat_{\chi\times \chi}(\Rep(\tilde{F}_\lambda))$. The other inclusion also follows in almost the same manner.
\end{proof}

%

%
%
%
%


\bibliographystyle{amsalpha}
\bibliography{ambc}

\end{document}